\newcommand{\R}{{\mathbb R}}
\newcommand{\N}{{\mathbb N}}
\newcommand{\EE}{{\mathbb E}}
\newcommand{\PP}{{\mathbb P}}
\newcommand{\C}{{\mathbb C}}
\newcommand{\Lc}{\mathcal L}
\newcommand{\Wc}{\mathcal W}
\newcommand{\uti}{\underline t_i}
\newcommand{\eps}{\varepsilon}
\newcommand{\F}{\mathscr{F}}
\newcommand{\Fc}{\mathcal{F}}
\newcommand{\tX}{\widetilde X}
\newcommand{\tY}{\widetilde Y}
\newcommand{\tW}{\widetilde W}
\newcommand{\tB}{\widetilde B}
\newcommand{\tx}{\tilde x}
\newcommand{\bi}{\mathbf i}
\newcommand{\tpi}{\widetilde \pi}
\newcommand{\tPi}{\widetilde \Pi}
\newcommand{\pistar}{\pi^\ast}
\theoremstyle{plain}
\newtheorem{theorem}{Theorem}
\newtheorem{lemma}{Lemma}
\theoremstyle{definition}
\begin{document}

\title[
Approximation of SDEs with  a drift  coefficient of fractional Sobolev regularity]{
	On optimal error rates 
	for strong approximation of SDEs \\ with a drift coefficient \\ of fractional Sobolev regularity}

\author[Ellinger]
{Simon Ellinger}
\address{
	Faculty of Computer Science and Mathematics\\
	University of Passau\\
	Innstrasse 33 \\
	94032 Passau\\
	Germany} \email{simon.ellinger@uni-passau.de}

\author[M\"uller-Gronbach]
{Thomas M\"uller-Gronbach}
\address{
Faculty of Computer Science and Mathematics\\
University of Passau\\
Innstrasse 33 \\
94032 Passau\\
Germany} \email{thomas.mueller-gronbach@uni-passau.de}

\author[Yaroslavtseva]
{Larisa Yaroslavtseva}
\address{
	Institute of Mathematics and Scientific Computing\\
	University of Graz\\
	Heinrichstra{\ss}e 36 \\
	8010 Graz\\
	Austria} \email{larisa.yaroslavtseva@uni-graz.at}

\begin{abstract} 
We study strong approximation of scalar additive noise driven  stochastic differential equations (SDEs)
 at time point $1$
 in the case that  the drift coefficient is bounded and 
  has Sobolev regularity $s\in(0,1)$.
Recently, it has been shown in~\cite{DGL22} that for such SDEs the equidistant Euler approximation achieves an $L^2$-error rate of at least $(1+s)/2$, up to an arbitrary small $\varepsilon$,
in terms of the number of evaluations of the driving Brownian motion $W$.
In the present article  we  
prove a matching  lower error bound
 for $s\in(1/2,1)$.   More precisely we show that, for 
 every
 $s\in(1/2,1)$,
the $L^2$-error rate $(1+s)/2$ can,
up  to a logarithmic term,
not be improved in general by  no numerical 
 method based on finitely many evaluations of $W$ at fixed time points. Up to now, this result was known in the literature only for the cases $s=1/2-$ and $s=1-$.

 For the proof we employ the coupling of noise technique  recently introduced in~\cite{MGY23} to bound the $L^2$-error of an arbitrary approximation from below by the $L^2$-distance 
 of two occupation time functionals provided by a specifically chosen drift coefficient with Sobolev regularity $s$ and two  solutions of the corresponding SDE with coupled driving Brownian motions. For the analysis of the latter distance we employ a transformation of the original SDE to overcome the problem of correlated increments of the difference of the two coupled solutions, occupation time estimates to cope with the lack of regularity of the 
 chosen drift coefficient around 
 the point $0$ and scaling properties of the  drift coefficient.

\end{abstract}

\maketitle

\section{Introduction and main results}\label{s3}

Consider a scalar autonomous stochastic differential equation (SDE)
\begin{equation}\label{sde0}
	\begin{aligned}
		dX_t & = \mu(X_t) \, dt +  dW_t, \quad t\in [0,1],\\
		X_0 & = x_0
	\end{aligned}
\end{equation}
with deterministic initial value $x_0\in\R$, drift coefficient $\mu\colon\R\to\R$ and a one-dimensional driving
Brownian motion $W=(W_t)_{t\in[0,1]}$ and note that~\eqref{sde0} has a unique strong solution if $\mu$ is measurable and bounded, see~\cite{V80}.

For every $n\in\N$, let $X^\text{E}_n$ denote the corresponding continuous-time Euler scheme with $n$ equidistant steps, i.e. $X^\text{E}_{n,0} = x_0$ and
\[
X^\text{E}_{n,t} =X^\text{E}_{n,(i-1)/n}+ \mu\bigl( X^\text{E}_{n,(i-1)/n}\bigr)\,(t-(i-1)/n) + W_t -W_{(i-1)/n}, \quad t\in ((i-1)/n,i/n]
\]
for $i=1,\dots,n$.

Recently, in~\cite{DGL22} the performance of the Euler scheme $X^\text{E}_{n}$ was studied in the case when the drift coefficient $\mu$ 
has
fractional Sobolev regularity. To be more precise,
for $s\in (0,1)$ and $p\in [1,\infty)$ let
\[
W^{s,p} = \biggl\{ f\colon \R\to\R\,\Bigl| \, f \text{ is measurable and }\int_\R\int_\R \frac{|f(x)-f(y)|^p}{|x-y|^{1+sp}}\, dx\, dy <\infty \biggr\}
\]
be the
space of functions $f$ that have Sobolev regularity of order $s$ with integrability exponent $p$.
The following error estimate is the consequence of a more general, multi-dimensional result in ~\cite{DGL22}.

\begin{theorem}\label{thm1}
Let $s\in (0,1)$, $p\in [2,\infty)$ and assume that $\mu$ is bounded and  	$\mu \in W^{s,p}$.
Then for all $\eps\in (0,\infty)$ there exists $c\in (0,\infty)$ such that for all $n\in\N$,
\[
\EE\Bigl[\,\sup_{t\in [0,1]}| X_t-X^\text{E}_{n,t}|^p\Bigr]^{1/p} 
 \le \frac{c}{n^{(1+s)/2-\eps}}.
\]
\end{theorem}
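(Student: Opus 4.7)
The plan is to apply the stochastic sewing lemma of L\^e, which is the standard tool for strong convergence analysis of Euler schemes with irregular drifts and underlies the proof in~\cite{DGL22}. Writing $\kappa_n(r) = \lfloor n r \rfloor /n$, the error satisfies the integral identity
\[
X_t - X^\text{E}_{n,t} = \int_0^t \bigl(\mu(X_r) - \mu(X^\text{E}_{n,\kappa_n(r)})\bigr)\, dr,
\]
so I would split the integrand as $\mu(X_r) - \mu(X_{\kappa_n(r)})$ (the ``quadrature'' error, due to freezing the drift on a grid interval) plus $\mu(X_{\kappa_n(r)}) - \mu(X^\text{E}_{n,\kappa_n(r)})$ (the ``propagation'' error, measuring how the scheme drifts away from $X$ at grid points), estimate the two contributions separately in $L^p$ uniformly in $t\in[0,1]$, and close the resulting inequality by Gronwall.

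For the quadrature term I would apply the stochastic sewing lemma to the two-parameter germ
\[
A_{s,t} = \int_s^t \bigl(\mu(X_r) - \EE\bigl[\mu(X_r)\mid \F_s\bigr]\bigr)\, dr,
\]
where $(\F_s)$ denotes the Brownian filtration. The ingredient driving the sewing estimate is the quadrature bound $\|\EE[f(X_r)\mid\F_s] - f(X_s)\|_{L^p} \lesssim \|f\|_{W^{s,p}}(r-s)^{s/2}$ for bounded $f\in W^{s,p}$; this in turn follows by Girsanov reduction to a Brownian motion combined with standard heat semigroup estimates of the form $\|P_t f - f\|_{L^p} \lesssim t^{s/2}\|f\|_{W^{s,p}}$. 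Sewing then upgrades the pointwise $(r-s)^{s/2}$ decay of the germ to the global $n^{-(1+s)/2}$ bound, with the arbitrarily small $\eps$-loss arising from BDG constants and from interpolating Brownian density estimates.

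For the propagation term I would regularise the drift through the Gaussian semigroup and set $\mu_\delta = P_\delta \mu$ with $\delta\sim n^{-1}$, splitting
\[
\mu(x) - \mu(y) = \bigl(\mu(x) - \mu_\delta(x)\bigr) - \bigl(\mu(y) - \mu_\delta(y)\bigr) + \bigl(\mu_\delta(x) - \mu_\delta(y)\bigr).
\]
The first two summands are controlled in average against paths of $X$ and $X^\text{E}_n$ by $\|\mu - \mu_\delta\|_{L^p} \lesssim \delta^{s/2}\|\mu\|_{W^{s,p}}$; the last summand is Lipschitz in $(x,y)$ with constant of order $\delta^{(s-1)/2}$, producing a contribution which, combined with BDG for the Brownian remainder, is absorbed by Gronwall against the running $L^p$-supremum of $X - X^\text{E}_n$.

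The main obstacle is the sharp quadrature estimate for $A_{s,t}$: to recover the exponent $(1+s)/2$ rather than the trivial $1/2$ one must genuinely exploit the fractional Sobolev smoothness of $\mu$ through heat kernel bounds applied along the law of $X_r$, whose Lebesgue densities are controlled by Girsanov since $\mu$ is bounded. Balancing $\delta$ in the propagation estimate, tracking $\eps$-losses at every use of $L^p$ moments, and avoiding the logarithmic divergence when summing $n$ sewing increments form the technical heart of the argument; everything else is bookkeeping once these heat kernel and sewing estimates are in place.
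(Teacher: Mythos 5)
The paper does not prove Theorem~\ref{thm1}: it is stated as a direct consequence of a multi-dimensional result in~\cite{DGL22} and no argument is given here, so there is no in-paper proof to compare against. Your sketch does capture the broad outline of the~\cite{DGL22} strategy (stochastic sewing of L\^e for the ``quadrature'' error, heat-semigroup regularization of $\mu$, Girsanov/density bounds to transfer $\Lc^p(\R)$ smoothing estimates to expectations along the solution), and the heat-semigroup bounds you invoke, $\|P_\delta \mu - \mu\|_{\Lc^p} \lesssim \delta^{s/2}\,[\mu]_{W^{s,p}}$ and the Bernstein-type Lipschitz bound for $P_\delta\mu$, are the right ingredients.

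However, the closing Gronwall step as you have written it does not close, and this is not a bookkeeping detail. With $\delta\sim n^{-1}$ the Lipschitz constant of $\mu_\delta$ is of order $\delta^{(s-1)/2}\sim n^{(1-s)/2}$ (in fact it is even larger, of order $\delta^{(s-1/p-1)/2}$ if one goes through the Sobolev embedding $W^{s,p}\hookrightarrow C^{s-1/p}$), and a naive Gronwall then produces a factor $\exp(n^{(1-s)/2})$ that swamps the quadrature gain. The argument in~\cite{DGL22} avoids this precisely by \emph{not} reducing the propagation error to a Lipschitz-plus-Gronwall bound: the two-parameter error $(X_t - X^{\text{E}}_{n,t}) - (X_s - X^{\text{E}}_{n,s})$ is itself controlled via stochastic sewing, so that the regularity gain from averaging over Brownian increments applies to the propagation term as well and no multiplicative Lipschitz constant of order $n^{(1-s)/2}$ ever enters the final estimate. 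A second, smaller inaccuracy: your sewing germ $A_{s,t}=\int_s^t\bigl(\mu(X_r)-\EE[\mu(X_r)\mid\F_s]\bigr)\,dr$ is the martingale remainder, whereas the object one typically controls via sewing is the compensator $\int_s^t\EE[\mu(X_r)\mid\F_s]\,dr - (t-s)\,\mu(X_s)$ (with the martingale part handled separately by Burkholder--Davis--Gundy). This is harmless for a sketch, but together with the Gronwall issue it suggests that the final ``buckling'' of the estimate — the genuinely hard part of~\cite{DGL22} — has been glossed over.
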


In particular, 
under the assumptions of Theorem \ref{thm1},
the Euler approximation $X^\text{E}_{n,1}$ of $X_1$  achieves an 
$L^p$-error
rate of at least $(1+s)/2-$ in terms of the number $n$ of evaluations of the Brownian motion $W$, i.e.,
for all $\eps\in (0,\infty)$ there exists $c\in (0,\infty)$ such that for all $n\in\N$,
\begin{equation}\label{eul}
\EE\bigl[| X_1-X^\text{E}_{n,1}|^p\bigr]^{1/p} \le \frac{c}{n^{(1+s)/2-\eps}}.
\end{equation}

We add that for $p=2$ and a slightly different setting for the drift coefficient $\mu$,  the bound \eqref{eul} was essentially 
already known from~\cite{NS19}.
To be more precise, in the latter paper it is assumed that $\mu=a+b$, where $a$ is bounded and has bounded,
continuous
 derivatives up to order $2$ and 
$b$ is bounded, Lebesgue integrable and belongs to $ W^{s,2}$. Moreover, in contrast to \eqref{eul}, the Euler approximation in~\cite{NS19} is based on a non-equidistant time discretization  if $s\in(1/2, 1)$. 

The estimate~\eqref{eul}
naturally leads to the question whether the $L^p$-error rate  $(1+s)/2-$ 
is essentially 
optimal in
 the class of all 
 approximations of $X_1$
 based on finitely many evaluations of  $W$ or whether there exists a method from this class that achieves  a better $L^p$-error rate than   $(1+s)/2$  under the assumptions of Theorem \ref{thm1}, i.e. for  	$\mu \in W^{s,p}$ 
 bounded.

 To the best of our knowledge, the answer to this question 
 is so far known in the literature only in the case $s=1-$ for all $p\in[2, \infty)$ and in the case  $s=1/2-$ for $p=2$.

To be more precise, it is well-known that for 
SDEs \eqref{sde0} with a 
drift coefficient $\mu$ that has bounded, continuous derivatives up to order $3$ and satisfies $\mu'(x_0)\neq 0$,
the
best possible $L^p$-error rate that can be achieved by any numerical method based on finitely many evaluations of $W$ is at most $1$ for all $p\in[1, \infty)$, i.e. 
\[
\inf_{\substack{
		t_1,\dots ,t_n \in [0,1]\\
		g \colon \R^n \to \R \text{ measurable} \\
}}	 \EE\bigl[|X_1-g(W_{t_1}, \ldots, W_{t_n})|^p\bigr]^{1/p}\geq  \frac{c}{ n},
	\]
where $c\in (0,\infty)$ does not depend on $n$, see~\cite{m04}. This result was recently generalized in~\cite{HHMG18} to the case
that
the drift coefficient $\mu$ has the above regularity only locally, in a small neighborhood of the initial value $x_0$. In particular, 
the assumptions from~\cite{HHMG18} are satisfied for the SDE \eqref{sde0} with  $\mu=(1-|\cdot|)1_{[-1,1]}$ and $x_0\in(-1,0)\cup(0, 1)$. Since 
the latter choice of $\mu$ is Lipschitz continuous and has bounded support, it belongs
to the 
space $W^{s,p} $ for all $s\in(0,1)$ and all $p\in[1, \infty)$, see, e.g., \cite[Example 2.17]{Ern21} and \cite[Lemma 5.1]
{DNPV2012}, which demonstrates
that, for all $p\in[2, \infty)$,
the $L^p$-error rate  $(1+s)/2-$ in \eqref{eul} can essentially  not be improved in general  for $s=1-$. 

Moreover, recently in~\cite{ELL24,MGY23} it has been shown that for 
SDEs \eqref{sde0} with a 
drift coefficient $\mu$ that is piecewise differentiable with a bounded Lipschitz derivative and has at least one discontinuity,
the best possible $L^p$-error rate that can be achieved by any numerical method based on finitely many evaluations of $W$ is at most $3/4$ for all $p\in[1, \infty)$, i.e.
\begin{equation}\label{lbdisc}
\inf_{\substack{
		t_1,\dots ,t_n \in [0,1]\\
		g \colon \R^n \to \R \text{ measurable} \\
}}	 \EE\bigl[|X_1-g(W_{t_1}, \ldots, W_{t_n})|^p\bigr]^{1/p}\geq  \frac{c}{ n^{3/4}},
	\end{equation}
where $c\in (0,\infty)$ does not depend on $n$. This lower bound  
applies in particular 
in the case of $\mu=1_{[0,1 ]}$.
Observing that $1_{[0,1]}\in W^{s,p} $ for all $s\in(0, 1/p)$ and all $p\in[1, \infty)$, see, e.g.,
\cite[Section 3.1]{Sickel2021}
 we conclude that the $L^2$-error rate  $(1+s)/2-$ in \eqref{eul}  can 
 essentially  not be improved in general  for $s=1/2-$.

In the present article we show that  the $L^2$-error rate  $(1+s)/2-$ 
of the equidistant Euler approximation $X^\text{E}_{n,1}$ in \eqref{eul}  
is essentially optimal
for all $s\in (1/2,1)$. More formally, the main result of this article is the following theorem.

\begin{theorem}\label{thm2}
	For
	every
	$s\in (1/2,1)$
	there exists $c\in (0,\infty)$ and a bounded, 
	Lebesgue
	integrable  $\mu\in W^{s,2}$ 
such that  for all $n\in\N$,
	\begin{equation}\label{Mainlb}
\inf_{\substack{
		t_1,\dots ,t_n \in [0,1]\\
		g \colon \R^n \to \R \text{ measurable} \\
}}	 \EE\bigl[|X_1-g(W_{t_1}, \ldots, W_{t_n})|^2\bigr]^{1/2}\geq  \frac{c}{ \ln (n+1)n^{(1+s)/2}}.
	\end{equation}
\end{theorem}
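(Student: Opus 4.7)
My plan is to follow the coupling-of-noise strategy announced in the abstract, which reduces the lower bound for an arbitrary approximation based on $W_{t_1},\dots,W_{t_n}$ to a lower bound on the $L^2$-distance of two strong solutions of \eqref{sde0} driven by appropriately coupled Brownian motions. Concretely, given a mesh $0\le t_1<\cdots<t_n\le 1$ and a measurable $g\colon\R^n\to\R$, one constructs a second Brownian motion $\widetilde W$, defined on the same probability space, such that $\widetilde W_{t_i}=W_{t_i}$ for every $i$ but such that $\widetilde W$ and $W$ disagree on every subinterval $[t_{i-1},t_i]$; a natural realisation is to reflect the Brownian bridges on these intervals. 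Denoting by $\widetilde X$ the strong solution of \eqref{sde0} driven by $\widetilde W$, the triangle inequality gives
\[
2\,\EE\bigl[|X_1-g(W_{t_1},\dots,W_{t_n})|^2\bigr]^{1/2}\ge \EE\bigl[|X_1-\widetilde X_1|^2\bigr]^{1/2},
\]
so it suffices to exhibit a bounded, Lebesgue integrable $\mu\in W^{s,2}$ for which the right-hand side is bounded below by $c/(\ln(n+1)\,n^{(1+s)/2})$, uniformly in the mesh.

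The next step is to select the drift. Since the upper bound \eqref{eul} is saturated only by irregular behaviour, I would choose $\mu$ to have a Hölder-type singularity of exponent $s-1/2$ concentrated at $0$, multiplied by a smooth compactly supported cut-off and possibly an odd sign pattern, so that $\mu$ just sits in $W^{s,2}$ via the characterisation of the Slobodeckij norm and obeys a clean scaling law $\mu(\lambda x)\sim \lambda^{s-1/2}\mu(x)$ for small $\lambda$. To compare $X$ and $\widetilde X$ on a single subinterval $[(i-1)/n,i/n]$, the basic difficulty is that the increments of the difference $X-\widetilde X$ are not independent across intervals, since $X$ and $\widetilde X$ couple at the endpoints through their previous history. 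Following the strategy indicated in the abstract, I would apply a Zvonkin-type transformation $\phi$ solving an appropriate elliptic problem to pass to $Y=\phi(X)$, $\widetilde Y=\phi(\widetilde X)$; the transformed drift is regular and, after splitting the resulting difference into a leading stochastic-integral term plus lower-order terms, the per-interval contributions can be analysed as essentially conditionally independent given the mesh values. The per-interval analysis then reduces to estimating
\[
\int_{(i-1)/n}^{i/n}\bigl(\mu(X_s)-\mu(\widetilde X_s)\bigr)\,ds
\]
when $X_s$ visits the neighbourhood of $0$ where $\mu$ is irregular.

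The technical heart will be step three: proving a sharp occupation-time lower bound that exploits the scaling of $\mu$. On a subinterval of length $1/n$ the Brownian bridge difference $W-\widetilde W$ has typical size $n^{-1/2}$, so if $X_s$ spends a time of order $n^{-1/2}$ in a window of width $n^{-1/2}$ around $0$, the magnitude of $\mu(X_s)-\mu(\widetilde X_s)$ there is of order $n^{-(s-1/2)/1}$ times a bridge fluctuation, producing a per-interval lower contribution of order $n^{-(1+s)}$ in $L^2$; summing $n$ independent contributions and taking square roots yields the claimed rate $n^{-(1+s)/2}$. The scaling law of $\mu$ is what allows all $n$ per-interval problems to be reduced to a single problem on the unit scale via Brownian rescaling, uniformly in $i$. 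The main obstacle, and the source of the logarithmic factor in \eqref{Mainlb}, is that the argument has to be carried out uniformly over arbitrary meshes and over the random location of $X_{(i-1)/n}$ relative to $0$; when $X_{(i-1)/n}$ is farther from $0$ than the natural window $n^{-1/2}$, one loses, and I expect the $\ln(n+1)$ to arise from a union bound or dyadic-scale summation used to ensure that with sufficient probability at least one subinterval traps $X$ near the singularity of $\mu$. Verifying that the chosen $\mu$ does lie in $W^{s,2}$, is bounded and Lebesgue integrable, and that the transformation $\phi$ preserves all the estimates, will be routine by comparison.
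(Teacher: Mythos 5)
Your architecture (coupling of noise, triangle-inequality reduction, Zvonkin transformation, per-interval scaling) matches the paper's, but the drift you propose and the mechanism you foresee for the logarithmic loss are both different from the paper's, and I believe the specific choices you sketch would not close.

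The most serious gap is the choice of $\mu$. A compactly supported function with a local $|x|^{s-1/2}$ singularity at $0$ is \emph{not} in $W^{s,2}$: its Fourier transform decays like $|\xi|^{-1/2-s}$, so $\int_\R |\xi|^{2s}\,|\widehat\mu(\xi)|^2\,d\xi \sim \int |\xi|^{-1}\,d\xi$ diverges logarithmically, and the Slobodeckij seminorm is infinite. To land exactly in $W^{s,2}$ you would need a logarithmic correction to the singularity --- but any such correction destroys the exact scaling $\mu(\lambda x)=\lambda^{s-1/2}\mu(x)$ that your step three relies on to reduce every subinterval to the unit scale. The paper sidesteps this by taking $\mu_s=\F h_s$ with $h_s(x)=(e+|x|)^{-1/2-s}/\ln(e+|x|)$, a globally defined bounded smooth function with decay $|\mu_s(x)|\lesssim x^{-2}$; the scaling law in Fourier variables is then only asymptotic and carries a factor $1/\ln(n+1)$, and this --- not a union bound or dyadic summation --- is where the logarithm in \eqref{Mainlb} comes from. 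Your explanation of the log factor is therefore not the one that works here.

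The second gap is in the per-interval lower bound. You picture the main contribution as coming from a time window of order $n^{-1/2}$ in which $X$ sits within distance $n^{-1/2}$ of the singularity at $0$, and you worry about losing when $X_{(i-1)/n}$ is far from $0$. The paper's lower bound (its Lemma~\ref{lemf5}) has a different structure: after conditioning on $X_{t_{i-1}}$, one uses a two-sided Gaussian estimate for the density of $X_{t_{i-1}}$ (so the argument does not need $X$ to be near any particular point), rewrites the coupled increment through the Fourier representation $\mu_s=\F h_s$, applies Plancherel and the convolution theorem to move to Fourier variables, rescales, and then passes to the limit with Fatou's lemma. The Gaussian scaling you describe does enter, but through this Fourier-analytic computation, not through an occupation-time-near-zero event; occupation-time estimates (Lemma~\ref{Xlem1}(iv)) appear only to control a lower-order remainder $B_i$. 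Finally, two smaller points: the paper couples via independent (not reflected) Brownian bridges --- reflection still gives the triangle-inequality reduction but is not the setup used downstream --- and the transformed process $Y=G_\mu(X)$ has zero drift and a Lipschitz \emph{diffusion} coefficient (SDE~\eqref{sdeY}), which is what makes the increments of $Y-\widetilde Y^\pi$ uncorrelated across subintervals; this uncorrelatedness, rather than any conditional independence of the original increments, is the key structural gain from the transformation.
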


Note that Theorem \ref{thm2} covers all approximations of $X_1$ that are based on evaluations of $W$ at fixed times $t_1, \ldots, t_n\in[0, 1]$ but does not apply to adaptive approximations, i.e. approximations that may choose both
the number and the location of evaluations of $W$ in a sequential
way, i.e. dependent on the previously observed evaluations of $W$. 
While it is known from~\cite{MG02_habil, m04}  that  for a large class of SDEs \eqref{sde0} with a
Lipschitz continuous drift coefficient $\mu$,  adaptive approximations can not achieve a better $L^p$-error rate than what is best possible for non-adaptive methods, the situation may drastically change if the drift coefficient is no longer Lipschitz continuous. In fact, recently in ~\cite{Y2022} an adaptive approximation was constructed that achieves  an $L^p$-error rate of at least  $1$ in terms of the average number of evaluations of $W$ under the assumption that the drift coefficient $\mu$ is piecewise differentiable with a bounded Lipschitz derivative, while in this case the best possible rate  for non-adaptive approximations is $3/4$ if $\mu$ has at least one discontinuity, see~\eqref{lbdisc}.
Whether Theorem \ref{thm2} extends to the class of  all   adaptive approximations is open so far and  a challenging question.

Theorem 2 provides a matching lower bound  for 
the upper bound
 \eqref{eul} in the case $s\in (1/2,1)$ and $p=2$. 
We conjecture that in fact  for all  $s\in (0,1)$ and all $p\in[2, \infty)$, the $L^p$-error rate  $(1+s)/2-$  in \eqref{eul}  can essentially not be improved
in general
for bounded $\mu \in W^{s,p}$. 
The proof
of this conjecture will be the subject of future work.

We add that numerical approximation of SDEs with 
a non-Lipschitz drift coefficient
is intensively studied since about $15$ years.
In particular, 
$L^p$-error rates have meanwhile been established for a wide range of non-Lipschitz conditions on  the drift coefficient, including local Lipschitz continuity, H\"older continuity, piecewise regularity and fractional Sobolev regularity, and also in different settings for the driving noise of the SDE. Moreover, SDEs with a non-Lipschitz diffusion coefficient are investigated as well.  See~\cite{MGY24} for 
a recent
survey on corresponding upper and lower $L^p$-error bounds and further references.
Most remarkably, in~\cite{DG18,DGL22} it was shown that the Euler approximation $X^\text{E}_{n,1}$ of $X_1$ achieves an $L^p$-error rate of at least $1/2-$ even if the drift coefficient $\mu$ is only bounded and measurable, and this result still holds for SDEs with non-additive noise under suitable assumptions on the diffusion coefficient.

We
finally
 would like to 
explicitly mention 
two further results on upper error bounds
 from the literature, which are strongly related to the upper bound \eqref{eul}.
In~\cite{MGY19b}, 
 a transformation-based Milstein-type scheme is constructed that achieves, for all $p\in [1,\infty)$, an $L^p$-error rate of at least  $3/4$ for SDEs with a discontinuous drift coefficient and multiplicative noise. 
More precisely, 
it is assumed in ~\cite{MGY19b} that
 the drift coefficient $\mu$ is piecewise Lipschitz continuous, the diffusion coefficient  is Lipschitz continuous and non-degenerate at the discontinuity points of $\mu$ and both coefficients have 
 piecewise a Lipschitz continuous derivative.
 Note that if, additionally, $\mu$ has a compact support then $\mu\in W^{s,2}$ for all $s\in(0, 1/2)$, see e.g. \cite[Section 3.1]{Sickel2021} and \cite[Theorem 4.6.4/2]{rs96} 
  and thus the upper bound \eqref{eul} yields an $L^2$-error rate of at least $3/4-$ for the approximation of the SDE \eqref{sde0}, 
 which is consistent with  the $L^2$-error rate of at least $3/4$ proven in ~\cite{MGY19b}. 
 Finally, in~\cite{butkovsky2020approximation, Ger23}, the $L^p$-approximation of SDEs with an $s$-H\"older continuous and bounded drift coefficient $\mu$ is studied 
for $s\in(0,1]$. In particular, it is shown in~\cite{butkovsky2020approximation} that, for all $p\in [1,\infty)$, the Euler approximation $X^\text{E}_{n,1}$ achieves an $L^p$-error rate of at least  $(1+s)/2-$ for the associated SDE \eqref{sde0} and in~\cite{Ger23} the same  $L^p$-error rate is established for the Milstein scheme for SDEs with a uniformly elliptic and $C^3_b$ diffusion coefficient. Again, if, additionally, $\mu$ has a compact support, then $\mu\in W^{u,2}$ for all $u\in(0, s)$, see e.g. 
\cite[Example 2.17]{Ern21} and \cite[Lemma 5.1]{DNPV2012},
and thus the upper bound \eqref{eul} yields an $L^p$-error rate of at least $(1+s)/2-$ for the associated SDE \eqref{sde0}, which is consistent with  the upper bounds  proven in ~\cite{butkovsky2020approximation,Ger23}.

The rest of the  article is organised as follows. We first present a sketch of our proof strategy of   Theorem \ref{thm2} in Section \ref{S1}.  In Section \ref{S2}, we introduce some notation and 
recall 
basic facts  about fractional Sobolev spaces
and the Fourier transformation,
which are relevant for the proof of Theorem \ref{thm2}.  In Section~\ref{S3} we provide essential technical tools for the  proof of Theorem \ref{thm2}, which is 
then 
carried out in Section \ref{proofthm}.

\section{On the proof of Theorem~\ref{thm2}}\label{S1}

The proof of Theorem \ref{thm2} is constructive. For $s\in (1/2,1)$, the  respective drift coefficient $\mu_s
\in W^{s,2}$
 is the Fourier transform 
\begin{equation}\label{mu1}
\mu_s = \F h_s
\end{equation}
of the function $h_s\colon \R\to\R$  given by 
\begin{equation}\label{mu2}
h_s(x)=\frac{1}{(e+|x|)^{1/2+s}\,\ln(e+|x|)}, \quad x\in\R.
\end{equation}

A slight modification of the function $\mu_s$
was recently used in~\cite{A2017} for establishing  a
 sharp
 lower error bound, 
 up to a logarithmic term,
 for the $L^2$-approximation of occupation time functionals
$
\int_0^1 \mu(W_t)dt
$
of the Brownian motion $W$ for integrands $\mu\in W^{s,2}$ 
based on 
equidistant 
evaluation of $W$.
More precisely, it was essentially shown in~\cite{A2017} that 
for all $n\in\N$,
\begin{equation}\label{altm}
\inf_{\substack{
		g \colon \R^n \to \R \text{ measurable} \\
}}	 \EE\Bigl[\Bigl|\int_0^1 \mu_s(W_t)dt-g(W_{1/n}, \ldots, W_{1})\Bigr|^2\Bigr]^{1/2}\geq  \frac{c}{ \ln (n+1) n^{(1+s)/2}},
	\end{equation}
where $c\in(0, \infty)$
does not depend on $n$.
 The proof of this result uses the fact that  the infimum in the left hand side of \eqref{altm} is attained by the conditional 
 expectation
 $\EE\bigl[\int_0^1 \mu_s(W_t)dt\bigl |W_{1/n}, \ldots, W_{1}\bigr]$,
whose 
mean squared error
is given by the sum
$
\sum_{i=1}^n\EE[Y_i^2]
$
with pairwise uncorrelated random variables
\begin{align*}
Y_i&=\int_{(i-1)/n}^{i/n} \bigl(\mu_s(W_t)-\EE\bigl[\mu_s(W_t) \bigl |W_{1/n}, \ldots, W_{1}\bigr]\bigr)dt
,\quad i=1,\dots,n,
\end{align*}
and the dependence of the expected values $\EE[Y_i^2]$ on $n$ can be 
analysed by using scaling properties of the Brownian motion $W$ and the Fourier 
transform
$\mathscr{F}$.

Our approximation problem is strongly related to the above approximation problem. Indeed, 
\[
X_1=x_0+\int_0^1 \mu_s(X_t)dt+W_1\qquad \PP\text{-a.s.}
\]
and hence the $L^2$-approximation of $X_1$ reduces to the $L^2$-approximation of the  occupation time functional
$
\int_0^1 \mu_s(X_t)dt
$
of the solution $X$ of \eqref{sde0}. However,     the direct analysis of the  
mean squared error
of the conditional 
expectation
$\EE\bigl[\int_0^1 \mu_s(X_t)dt\bigl |W_{t_1}, \ldots, W_{t_n}\bigr]$, similarly  to~\cite{A2017}, does not seem to be 
a feasible proof strategy
since the random variables
\[
Z_i=\int_{(i-1)/n}^{i/n} \bigl(\mu_s(X_t)-\EE\bigl[\mu_s(X_t) \bigl |W_{1/n}, \ldots, W_{1}\bigr]\bigr)dt,
\quad i=1,\dots,n,
\]
are 
probably
not uncorrelated in general 
and the expected values $\EE[Z_i^2]$
are  difficult to be analysed.

Instead,   
we employ the  coupling of noise approach, which was recently introduced in~\cite{MGY23} for proving the   lower error  bound \eqref{lbdisc} for SDEs with a 
drift coefficient $\mu$ that is piecewise Lipschitz continuous and has at least one discontinuity.
The idea of this approach is the following. Given $n\in\N$ and a discretization $\pi=\{t_1, \ldots, t_n\}$ of $[0,1]$ with $0<t_1<\ldots<t_n=1$  
we construct 
a second Brownian motion $\widetilde W^{\pi}$ such that  $W$ and $\widetilde W^{\pi}$ 
coincide
at the points $t_1,\dots,t_n$ but  are
independent, conditioned on $W_{t_1}, \ldots, W_{t_n}$. Let $\widetilde X^{\pi}=(\widetilde X^{\pi}_t)_{t\in[0,1]}$ denote the strong solution of the SDE \eqref{sde0} with driving Brownian motion  $\widetilde W^{\pi}$ in place of $W$.
Then 
\[
\inf_{g \colon \R^n \to \R \text{ measurable}} \EE\bigl[|X_1-g(W_{t_1},\dots,W_{t_n})|^2\bigr]^{1/2}\ge \frac{1}{2}\,\EE\bigl[|X_1-\widetilde X_1^{\pi}|^2\bigr]^{1/2}
\]
by the triangle inequality and the fact that $g(W_{t_1},\dots,W_{t_n}) = g(\widetilde W^{\pi}_{t_1},\dots,\widetilde W^{\pi}_{t_n})$,
see
Lemma \ref{lemf1}. In this way, the analysis of the smallest $L^2$-error for approximating $X_1$ based on $W_{t_1},\dots,W_{t_n}$ is reduced to the analysis of the $L^2$-distance between 
$X_1$ and $\widetilde X_1^{\pi}$.
We add that this idea is a special instance of what is called 'fooling algorithms' or 'diameter of information' in the field of 'Information-based Complexity': for a given numerical problem (here: $L^2$-approximation of $X_1$ based on evaluation of $W$ at the points $t_1,\dots,t_n$) we construct a second  instance of this problem (here: $L^2$-approximation of $\widetilde X^\pi_1$ based on evaluation of $\widetilde W^\pi$ at the points $t_1,\dots,t_n$) such that the admissible algorithms (here: the measurable functions $g\colon\R^n\to\R$) can not distinguish between the two problems based on the admissible information (here: evaluation of the driving Brownian motion at $t_1,\dots,t_n$) , see e.g.~\cite[Chapter 4]{NW08}.

For 
convenience
we restrict in the following to 
equidistant discretizations $\pi$, i.e. $t_i=i/n$ for $i=1,\dots,n$. 
For the analysis of the $L^2$-distance between $X_1$ and $\widetilde X_1^{\pi}$ we first apply a  bi-Lipschitz transformation $G\colon\R\to\R$ to $X$ and $\widetilde X^{\pi}$ that removes the drift coefficient such that the transformed stochastic processes $Y=(G(X_t))_{t\in[0,1]}$ and $\widetilde Y^{\pi}=(G(\widetilde X^{\pi}_t))_{t\in[0,1]}$ satisfy the SDE
\begin{equation}\label{sdeY}
	\begin{aligned}
		dY_t & =b(Y_t)dW_t, \quad t\in [0,1],\\
		Y_0 & = G(x_0)
	\end{aligned}
\end{equation}
and the SDE \eqref{sdeY} with $W$ replaced by $\widetilde W^{\pi}$, respectively, where $b\colon\R\to\R$ is globally Lipschitz continuous, see Lemma \ref{lem2}. By the Lipschitz continuity of $G$ we have
\[
\EE\bigl[|X_1-\widetilde X_1^{\pi}|^2\bigr]\geq c\cdot \EE\bigl[|Y_1-\widetilde Y_1^{\pi}|^2\bigr],
\]
where $c\in(0, \infty)$ does not depend on $\pi$.
The advantage of 
this
transformation procedure is that in contrast to the increments
\[
a_i
= (X_{t_i}- \tX^\pi_{t_i}) - (X_{t_{i-1}}- \tX^\pi_{t_{i-1}})=\int_{t_{i-1}}^{t_i} \bigl (\mu_s (X_{t}) -  \mu_s (\widetilde X_t^{\pi})  \bigr)\,
dt
\]
 of $X-\widetilde X^{\pi}$,
 the increments 
\[
		\alpha_i = (Y_{t_i}- \tY^\pi_{t_i}) - (Y_{t_{i-1}}- \tY^\pi_{t_{i-1}})=\int_{t_{i-1}}^{t_i}b(Y_t)dW_t-\int_{t_{i-1}}^{t_i}b(\widetilde Y^{\pi}_t)d\widetilde W^{\pi}_t
\]
of $Y-\widetilde Y^{\pi}$ over the time intervals $[t_{i-1}, t_i]$   are pairwise uncorrelated, and hence the representation
 \[
 \EE\bigl[|Y_1-\widetilde Y_1^{\pi}|^2\bigr]=\sum_{i=1}^
 {n}
 \EE[\alpha_i^2]
 \]
holds for the 
mean squared distance 
between $Y_1$ and $\widetilde Y_1^{\pi}$. 

The analysis of the increments $	\alpha_i$ 
is now
reduced to the analysis of the increments 
$a_i$
but with the processes $(X_t)_{t\in[t_{i-1},t_i]}$ and $(\widetilde X^\pi_t)_{t\in[t_{i-1},t_i]}$ replaced by the much simpler processes $(X_{t_{i-1}} + W_t-W_{t_{i-1}})_{t\in[t_{i-1},t_i]}$ and $(X_{t_{i-1}} +\widetilde W^\pi_t-  \widetilde W^\pi_{t_{i-1}})_{t\in[t_{i-1},t_i]}$, respectively. 
In fact, with 
\[
A_i
=\int_{t_{i-1}}^{t_i} \bigl (\mu_s (X_{t_{i-1} } +W_t - W_{t_{i-1} } ) -  \mu_s (X_{t_{i-1}} +\tW^\pi_t - \tW^\pi_{t_{i-1}})  \bigr)\,
dt
, \quad i=1,\dots,n,
\] 
we can show that
\begin{equation}\label{ll1}
	\sum_{i=1}^{n} \EE[\alpha_i^2]\geq c 
	\sum_{i=\lceil n/2 \rceil}^n
	\EE[A_i^2]-\frac{1}{n^{1+s+\varepsilon}}
\end{equation}
for some $c, \varepsilon\in(0, \infty)$
that do not depend on $n$,
see Lemma \ref{lemf3b}. For the proof of  \eqref{ll1} we employ smoothness  properties  of 
the transformation 
$G$, see Lemma \ref{lem1}, regularity properties of $\mu_s$,  see Lemma \ref{lem3}, 
upper bounds for the $L^4$-distance between $X_{t_{i-1}}$ and $\widetilde X^\pi_{t_{i-1}}$, which are obtained by applying Theorem \ref{thm1},
and
$L^2$-estimates
for the total time that $(X_s)_{s\in [t_{i-1},t_i]}$ and its approximation $(X_{t_{i-1}} + W_s-W_{t_{i-1}})_{s\in [t_{i-1},t_i]}$ spend on different sides of a horizontal line near $0$ in order
 to cope with the lack of regularity of the drift coefficient $\mu_s$ around $0$, see Lemma~\ref{Xlem1}(iv).

Finally, in Lemma~\ref{lemf5} we show 
that for $i\ge \lceil n/2 \rceil$, 
\begin{equation}\label{lx1}
\EE[A_i^2] \ge \frac{c}{\ln^2(n+1) n^{2+s}},
\end{equation}
where $c\in (0,\infty)$ does not depend on $n$, which completes the proof of Theorem~\ref{thm2}. 
The rough idea for the proof of~\eqref{lx1} is as follows. Using 
a Gaussian type lower bound for the density of $X_{t_{i-1}}$,
see  Lemma~\ref{Xlem1}(i), as well as 
  scaling properties of a Brownian motion
one obtains
 \begin{equation}\label{ll2}
\begin{aligned}
\EE[A_i^2] & \geq c_1 \int_{\R} e^{-c_2\,x^2}\EE\Bigl[\Bigl| \int_{t_{i-1}}^{t_i} \bigl (\mu_s (x +W_t - W_{t_{i-1} } ) -  \mu_s (x +\widetilde W^\pi_t - \widetilde W^\pi_{t_{i-1}})  \bigr)\, dt  \Bigr|^2\Big] \, dx\\
& 	= \frac{c_1}{n^{5/2}}  \int_\R e^{-c_2\,x^2/n}\,  \EE\Bigl[\Bigr|\int_0^1\bigl(\mu_s((x + W_t)/\sqrt{n} )- \mu_s((x + \widetilde W_t )/\sqrt{n} )\bigr)dt\Bigr|^2\Bigr] \, dx,
\end{aligned}
\end{equation}
where $c_1, c_2\in(0, \infty)$
do not depend on $n$ and $\widetilde W = \widetilde W^{\{1\}}$.
Using $\mu_s = \mathscr F h_s$ and 
scaling properties of the Fourier 
transform $\mathscr F$ we  may, asymptotically in $n$, replace the function $\mu_s(\cdot /\sqrt{n})$  by the function $(\ln(n+1))^{-1}(\sqrt n)^{-(s-1/2)} \mu_s$ in the right hand side of~\eqref{ll2}, which yields
\begin{align*}
\EE[A_i^2] & \ge \frac{c_1/2}{ \ln^2(n+1)n^{2+s}   }\int_\R e^{-c_2\,x^2/n}\, \eta(x) \, dx,
\end{align*} 
where
\[
\eta(x) = \EE\Bigl[\Bigr|\int_0^1\bigl(\mu_s(x + W_t) - \mu_s(x + \widetilde W_t ) \bigr)dt\Bigr|^2\Bigr].
\]
Finally, by Fatou's lemma, 
\[
\liminf_{n\to\infty} \int_\R e^{-c_2\,x^2/n}\, \eta(x) \, dx \ge \int_\R  \eta(x) \, dx > 0.
\]

\section{Notation}\label{S2}

We use $\bi$ to denote the imaginary unit in $\C$. For two measurable functions $f,g\colon\R\to\C$ we write $f=g\text{ a.e.}$ if $\lambda(\{f\neq g\}) = 0$.

For any $A\subset\R$ and any function $f\colon A\to\C$ we put $\|f\|_\infty = \sup_{x\in A} |f(x)|$.

For $k\in\N\cup\{\infty\}$ we use $C^k(\R)$ to denote the space of $k$ times continuously differentiable functions $f\colon \R\to\C$. The space of continuous  functions $f\colon \R\to\C$ is denoted by $C(\R)$ and we put $C_b(\R) = \{f\in C(\R)\colon f\text { is bounded}\}$.

For $\alpha\in [0,1)$ we use $C^\alpha(\R)$  to denote the space of functions $f\colon\R\to\C$ that are H\"older continuous of order $\alpha$, i.e. 
\[
[f]_{C^\alpha} = \sup_{x,y\in\R, \, x\neq y} \frac{|f(x)-f(y)|}{|x-y|^\alpha} < \infty.
\]

For $p\in [1,\infty)$ we use $\Lc^p(\R)$ to denote the space of measurable functions $f\colon\R\to\C$ that satisfy
\[
\|f\|_{\Lc^p} = \Bigl(\int_{\R} |f(x)|^p\, dx\Bigr)^{1/p}<\infty.
\]
For $s\in (0,1)$ and $p\in [1,\infty)$ we use $\Wc^{s,p}$ to denote the 
fractional Sobolev
 space of functions $f\in \Lc^p(\R)$ that satisfy
\[
[f]_{W^{s,p}} =\Bigl(\int_{\R} \int_{\R} \frac{|f(x)-f(y)|^p}{|x-y|^{1+sp}}\, dx\, dy \Bigr)^{1/p}< \infty
\]
and we put
\[
\Wc^{s-,p} = \bigcap_{s'\in (0,s)} \Wc^{s',p}.
\]
Note that for every $p\in [1,\infty)$ and all $ 0 < s' < s <1$,
\begin{equation}\label{z0}
	\Wc^{s,p} \subset \Wc^{s',p},
\end{equation}
see, e.g.~\cite[Proposition 2.1]{DNPV2012}.

For $f\in \Lc^1(\R) \cup \Lc^2(\R)$ we use $\F f\colon \R\to\C$ to denote a version of the Fourier transform of $f$. In particular, for $f\in \Lc^1(\R)$ we have
\[
\F f =  \int_\R e^{\bi z(\cdot)}\, f(z)\, dz\, \text{ a.e.}
\]

For later purposes we add that for every $f\in  \Lc^2(\R)$ and every $s\in (0,1)$ we have 
\begin{equation}\label{equiv}
	[f]_{W^{s,2}} =  c_s \int_\R |x|^{2s} |\F f(x)|^2\, dx,	
\end{equation}
where $c_s\in (0,\infty)$ only depends on $s$, see, e.g.,\cite[Proposition 3.4]{DNPV2012}.

For $f,g\colon\R\to \C$ we use 
\[
f\ast g\colon\R\to\C,\,\, x\mapsto \int_\R f(z)\, g(z-x)\, dz
\] 
to denote the convolution of $f$ and $g$, whenever this function is well-defined.

\section{Preliminaries}\label{S3}

In this section we provide essential tools that are employed in the proof of Theorem~\ref{thm2}, namely, the construction and properties of the bi-Lipschitz transformation used to transform the SDE~\eqref{sde0}, 
see Section~\ref{Trans}, estimates of the density and of certain occupation times of the solution of~\eqref{sde0}, see Section~\ref{solution}, and properties of the particular drift coefficient $\mu_s$ that is used in the proof of Theorem~\ref{thm2}.

\subsection{The transformation}\label{Trans}
The following transformation is a well-known tool to remove the drift coefficient of an SDE, see e.g.~\cite{NS19,Zvonkin74}.

Let $\mu\colon \R\to \R$ be locally integrable and define 
\begin{equation}\label{trans}
	G_\mu\colon \R\to\R, \, \, x\mapsto \int_0^x e^{-2\int_0^y \mu(z)\, dz}\, dy.
\end{equation}

We make use of the following well-known properties of $G_\mu$. 

\begin{lemma}\label{lem1}
	If  $\mu\in \Lc^1(\R)$ then $G_\mu$ has the following properties.\\[-.3cm]
	\begin{itemize}
		\item [(i)] $G_\mu\in C^1(\R)$ and there exist $c_1,c_2\in (0,\infty)$ such that $c_1 \le G_\mu'\le c_2$.\\[-.3cm]
		\item[(ii)] $G_\mu$ is a bijection with $G_\mu^{-1}\in C^1(\R)$ and $c_2^{-1} \le (G_\mu^{-1})'\le c_1^{-1}$.\\[-.3cm]
		\item [(iii)] $G_\mu'$ is absolutely continuous with weak derivative $-2\mu G_\mu'$.\\[-.3cm]
	\end{itemize}
	If, additionally, $\mu$ is bounded then $G_\mu'$ is Lipschitz continuous.
\end{lemma}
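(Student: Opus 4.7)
The plan is to reduce everything to properties of the inner function $F(y) = \int_0^y \mu(z)\, dz$ and then read off the conclusions from the identity $G_\mu' = e^{-2F}$. Since $\mu \in \Lc^1(\R)$, the function $F$ is absolutely continuous on $\R$, uniformly bounded by $\|\mu\|_{\Lc^1}$, and satisfies $F' = \mu$ almost everywhere. From this the two-sided bound $e^{-2\|\mu\|_{\Lc^1}} \le e^{-2F(y)} \le e^{2\|\mu\|_{\Lc^1}}$ is immediate, which yields item (i) with $c_1 = e^{-2\|\mu\|_{\Lc^1}}$ and $c_2 = e^{2\|\mu\|_{\Lc^1}}$; continuity of $G_\mu'$ follows from continuity of $F$, and thus $G_\mu \in C^1(\R)$.

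For item (ii), I would use that $G_\mu' \ge c_1 > 0$ to conclude that $G_\mu$ is strictly increasing, and the same lower bound forces $G_\mu(x) \to \pm\infty$ as $x \to \pm\infty$. Hence $G_\mu$ is a $C^1$ bijection from $\R$ to $\R$, and the inverse function theorem gives $G_\mu^{-1} \in C^1(\R)$ with
\[
(G_\mu^{-1})'(y) = \frac{1}{G_\mu'(G_\mu^{-1}(y))},
\]
from which the bounds $c_2^{-1} \le (G_\mu^{-1})' \le c_1^{-1}$ follow directly from item (i).

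For item (iii), the key point is the chain rule for absolutely continuous functions: because $F$ is absolutely continuous and takes values in the bounded interval $[-\|\mu\|_{\Lc^1}, \|\mu\|_{\Lc^1}]$, on which $t \mapsto e^{-2t}$ is Lipschitz, the composition $G_\mu' = e^{-2F}$ is absolutely continuous, with almost everywhere derivative $-2F'(y) e^{-2F(y)} = -2\mu(y) G_\mu'(y)$, which is also the weak derivative of $G_\mu'$. The final assertion is then a one-line consequence: if additionally $\mu$ is bounded, the weak derivative satisfies $|-2\mu G_\mu'| \le 2\|\mu\|_\infty c_2$ almost everywhere, and an absolutely continuous function with essentially bounded weak derivative is Lipschitz with that constant.

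I do not anticipate a real obstacle; all three items are standard applications of properties of absolutely continuous functions. The only mildly subtle step is invoking the chain rule for the composition $e^{-2F}$, which requires the outer function to be Lipschitz on the range of the inner one — this is immediate here because $F$ is bounded, but it is the one place where the hypothesis $\mu \in \Lc^1(\R)$ (rather than just local integrability) is genuinely used.
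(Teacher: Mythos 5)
Your proof is correct and follows essentially the same route as the paper: defining the inner integral $F$ (the paper calls it $T$), using its absolute continuity and uniform boundedness to get the two-sided bound on $G_\mu' = e^{-2F}$, deducing (ii) from (i), and obtaining (iii) via the chain rule for the composition of the $C^1$ (hence locally Lipschitz) exponential with the bounded absolutely continuous function $F$. Your write-up of (ii) and of the Lipschitz-on-the-range justification in (iii) is slightly more explicit than the paper's, but the substance is identical.
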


For convenience of the reader we provide a proof Lemma~\ref{lem1}.

\begin{proof} 
	Note that $\mu\in \Lc^1(\R)$ implies that the mapping 
	\[
	T\colon \R\to\R,\,\, x\mapsto  \int_0^x \mu(z)\, dz 
	\]
	is absolutely continuous. As a consequence, $G_\mu$ is continuously differentiable with 
	\[
	G_\mu'(x) = e^{-2T(x)},\,\, x\in \R.
	\] 
	Moreover, for every $x\in \R$,
	\[
	e^{-2\|\mu\|_{\Lc^1}} \le e^{-2T(x)} \le 	e^{2\|\mu\|_{\Lc^1}}, 
	\]
	which completes the proof of part (i). Part (ii) is an immediate consequence of part (i).
	
	Since $T$ is absolutely continuous with weak derivative $\mu$ and the mapping $S\colon\R\to \R$, $y\mapsto e^{-2y}$ is continuously differentiable, we obtain that $G_\mu'=S\circ T$ is absolutely continuous with weak derivative $(S'\circ T) \cdot \mu = -2G_\mu'\cdot \mu$. This completes the proof of part (iii). 
	
	The final statement in the lemma is a consequence of (i) and (iii) and the fact that $\|\mu G_\mu'\|_\infty \le  \|\mu\|_\infty \|G_\mu'\|_\infty$.
\end{proof}

Applying $G_\mu$ to the solution $X$ of the SDE~\eqref{sde0} yields the solution $Y$ of an SDE with drift coefficient zero and a Lipschitz diffusion coefficient. 

\begin{lemma}\label{lem2}
	Assume that $\mu$ is bounded and $\mu\in \mathcal L^1(\R)$. 
	Then 
	the SDE~\eqref{sde0} has a unique strong solution $X$ and the
	stochastic
	process                         
	\[
	Y=\bigl(Y_t = G_\mu(X_t)\bigr)_{t\in[0,1]}
	\]
	is 
	the  
	unique strong solution of the SDE 
	\begin{equation}\label{sde1}
		\begin{aligned}
			dY_t & = b_\mu(Y_t) \,   dW_t, \quad t\in [0,1],\\
			Y_0 & = G_\mu(x_0),
		\end{aligned}
	\end{equation}
	where $b_\mu = G_\mu'\circ G_\mu^{-1}$ is Lipschitz continuous.
\end{lemma}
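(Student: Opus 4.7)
The plan is to verify the three assertions in turn, in increasing order of difficulty. The existence and uniqueness of a strong solution $X$ of~\eqref{sde0} is immediate from Veretennikov's theorem~\cite{V80}, since $\mu$ is bounded and measurable. The Lipschitz continuity of $b_\mu=G_\mu'\circ G_\mu^{-1}$ is a direct composition argument: by Lemma~\ref{lem1}(ii), $G_\mu^{-1}$ is Lipschitz with constant at most $c_1^{-1}$, and by the final statement of Lemma~\ref{lem1}, $G_\mu'$ is Lipschitz, so the composition is Lipschitz as well.

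The core step is to verify that $Y_t=G_\mu(X_t)$ satisfies~\eqref{sde1}. Here the standard It\^o formula does not apply, because $G_\mu$ lies only in $C^1(\R)$, not in $C^2(\R)$. The natural tool is the generalized It\^o formula (equivalently, the It\^o-Tanaka / Bouleau-Yor formula, or Krylov's extension) for $C^1$ functions whose derivative is absolutely continuous with a locally bounded weak derivative. By Lemma~\ref{lem1}(iii), the weak derivative of $G_\mu'$ equals $-2\mu G_\mu'$, which is bounded since both $\mu$ and $G_\mu'$ are bounded. Applying this formula to the continuous semimartingale $X$, together with $d\langle X\rangle_s=ds$ and the occupation times formula, yields
\[
G_\mu(X_t)=G_\mu(x_0)+\int_0^t G_\mu'(X_s)\,dX_s-\int_0^t \mu(X_s)\, G_\mu'(X_s)\,ds.
\]
Substituting $dX_s=\mu(X_s)\,ds+dW_s$ produces the crucial cancellation of the two $\mu G_\mu'$ drift terms, leaving
\[
Y_t=G_\mu(x_0)+\int_0^t G_\mu'(X_s)\,dW_s = G_\mu(x_0)+\int_0^t b_\mu(Y_s)\,dW_s,
\]
where the last identity uses $X_s=G_\mu^{-1}(Y_s)$ and the definition of $b_\mu$. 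Uniqueness of the strong solution of~\eqref{sde1} then follows from the classical It\^o theory, since $b_\mu$ is Lipschitz.

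The main obstacle is the rigorous justification of the generalized It\^o formula, since one cannot simply quote its $C^2$-version. I would either invoke a ready-made result (Krylov's formula for $W^{2,\infty}_{\mathrm{loc}}$ functions applied to a non-degenerate It\^o diffusion, or the Bouleau-Yor formula expressed via local times) or, if a self-contained argument is preferred, approximate $\mu$ by a sequence of smooth bounded functions $\mu_k$ with $\|\mu_k\|_\infty\le\|\mu\|_\infty$ and $\mu_k\to\mu$ in $\Lc^1(\R)$ and a.e., form the corresponding $G_{\mu_k}\in C^2(\R)$ with uniform bounds on $G_{\mu_k}'$ and its Lipschitz constant, apply the standard It\^o formula to each approximation, and pass to the limit using dominated convergence for the Lebesgue integrals and the It\^o isometry for the stochastic integral. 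Either route produces the required identity under the hypotheses of the lemma.
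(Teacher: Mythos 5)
Your proof is correct and takes essentially the same approach as the paper: apply a generalized It\^o formula for $C^1$ functions with an absolutely continuous, weakly differentiable derivative (the paper cites \cite[Problem~3.7.3]{ks91}, which is precisely this version), exploit the cancellation of the two drift terms $\mu G_\mu'$, and conclude uniqueness for~\eqref{sde1} from the Lipschitz continuity of $b_\mu$. The only difference is that you spell out the cancellation computation and suggest alternative justifications (Krylov, Bouleau--Yor, or smoothing $\mu$) where the paper simply cites the textbook reference.
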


For convenience of the reader we also provide a proof Lemma~\ref{lem2}.

\begin{proof}
	By assumption, $\mu$ is measurable and bounded, which implies existence and uniqueness of a strong solution of~\eqref{sde0}, see
	~\cite{V80}. By Lemma~\ref{lem1} we may apply a generalized It\^{o} formula, see e.g.~\cite[Problem 3.7.3]{ks91}, to conclude that the 
	stochastic
	process $(G_\mu(X_t))_{t\in[0,1]}$ is a strong solution of the SDE~\eqref{sde1}. By Lemma~\ref{lem1}, the functions $G_\mu'$ and $G_\mu^{-1}$ are Lipschitz continuous, which implies that $b_\mu$ is Lipschitz continuous. As a consequence, the strong solution of the SDE~\eqref{sde1} is unique.
\end{proof}

\subsection{Properties of  the solution}\label{solution}

We provide properties of the solution $X^x$ of the scalar 
SDE
\begin{equation}\label{sdegeneral}
	\begin{aligned}
		dX^x_t & = \mu(X^x_t)\, dt + dW_t,\,\, t\in[0,1],\\ X^x_0 & = x
	\end{aligned} 
\end{equation}
with initial value $x\in\R$ that are crucial for the proof of Theorem~\ref{thm2}.

\begin{lemma}\label{Xlem1}
	Assume that $\mu$ is measurable and bounded, let $(\Omega,\mathcal A,\PP)$ 
		be a 
	complete probability space, let $W\colon [0,1]\times \Omega\to \R$ be a standard Brownian motion, and for every $x\in\R$ let $X^x\colon  [0,1]\times \Omega\to \R$ be a strong solution of the SDE~\eqref{sdegeneral} with driving Brownian motion $W$ and initial value $x$. Then
	\begin{itemize}
		\item[(i)] there exist $c_1,c_2,c_3,c_4\in (0,\infty)$ such that for all $x\in\R$ and all $t\in (0,1]$, the distribution of $X^x_t$ has a Lebesgue density $f_{X^x_t}\colon \R\to[0,\infty)$, which satisfies for all $y\in\R$,
		\[
		c_1\frac{1}{\sqrt{2\pi c_2 t}} e^{-\frac{(y-x)^2}{2c_2 t}} \le	f_{X^x_t}(y) \le c_3\frac{1}{\sqrt{2\pi c_4t}} e^{-\frac{(y-x)^2}{2c_4 t}},
		\]
		\item [(ii)] for all $x\in\R$ and all $\tau\in (0,1]$ there exists $c\in (0,\infty)$ such that for all $t\in [\tau,1]$ and all $u,v\in\R$ with $u\le v$, 
		\[
		\PP(X^x_t\in [u,v] )\leq c\, (v-u),   
		\]
		\item[(iii)] there exists $c\in (0,\infty)$ such that for all $x,\xi\in\R$, all $s\in (0,1]$ and all $\varepsilon\in(0, \infty)$, 
		\[
		\int_{0}^{s}\PP(|X_t^x-\xi|\leq \varepsilon)\, dt \leq c\,\varepsilon\,\sqrt{s},
		\]
		\item[(iv)] there exists $c\in(0, \infty)$ such that for all $x,\xi\in\R$ and all $s,t\in [0,1]$ with $s<t$,
		\[
		\EE\Bigl[\Bigl(\int_{s}^{t}1_{\{(X^x_u-\xi)\, (X^x_s + W_u-W_s-\xi)\leq 0\}} \, du\Bigr)^2\Bigr]\leq c\,(t-s)^3.
		\]
	\end{itemize}
\end{lemma}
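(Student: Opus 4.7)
My proof strategy is based on the observation that on the event
\[
A_u := \{(X^x_u - \xi)(X^x_s + W_u - W_s - \xi) \le 0\},
\]
the two factors have opposite signs, so $|X^x_s + W_u - W_s - \xi|$ is bounded by the absolute difference of the two factors, which equals $\bigl|\int_s^u \mu(X^x_r)\,dr\bigr| \le \|\mu\|_\infty (u-s)$. Writing $B_r := W_{s+r} - W_s$ for the Brownian motion independent of $\mathcal F_s$ and $\eta := X^x_s - \xi$ for the $\mathcal F_s$-measurable shift, I thus obtain
\[
\mathbf{1}_{A_u} \le \mathbf{1}_{\{|\eta + B_{u-s}| \le \|\mu\|_\infty (u-s)\}}.
\]
This reduces the problem to estimating how much time a shifted Brownian motion spends in a symmetric interval of size $O(u-s)$ around the origin, a problem that no longer involves the drift $\mu$ beyond the bound $\|\mu\|_\infty$.

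Expanding the square via Fubini and symmetry gives
\[
\EE\Bigl[\Bigl(\int_s^t \mathbf{1}_{A_u}\,du\Bigr)^2\Bigr] = 2\int_s^t\int_s^v \PP(A_u \cap A_v)\, du\, dv.
\]
For $s \le u \le v$, I bound $\PP(A_u \cap A_v)$ by conditioning on $\mathcal F_s$ and decomposing $B_{v-s} = B_{u-s} + (B_{v-s} - B_{u-s})$ into two independent centred Gaussian increments of variances $u-s$ and $v-u$. Using the density bound $1/\sqrt{2\pi\tau}$ for a centred Gaussian of variance $\tau$, the first factor contributes $\PP\bigl(|\eta + B_{u-s}| \le \|\mu\|_\infty(u-s)\bigm|\mathcal F_s\bigr) \le 2\|\mu\|_\infty\sqrt{u-s}/\sqrt{2\pi}$, while conditionally on $\mathcal F_s$ and $B_{u-s}$ the second increment makes the second factor at most $2\|\mu\|_\infty(v-s)/\sqrt{2\pi(v-u)}$, uniformly in the conditioning values. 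Since both bounds are uniform in $\eta$, one arrives at the unconditional estimate
\[
\PP(A_u \cap A_v) \le \frac{2\|\mu\|_\infty^2}{\pi}\cdot \frac{\sqrt{u-s}\,(v-s)}{\sqrt{v-u}}.
\]

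For the final step, the substitution $u = s + r(v-s)$ converts $\int_s^v \sqrt{u-s}/\sqrt{v-u}\,du$ into $(v-s)\int_0^1 \sqrt{r/(1-r)}\,dr = (\pi/2)(v-s)$, i.e.\ the Beta integral $B(3/2,1/2)$. Hence $\int_s^v \PP(A_u \cap A_v)\,du \le \|\mu\|_\infty^2 (v-s)^2$, and integrating once more over $v \in [s,t]$ yields the desired $(t-s)^3$ bound with $c = (2/3)\|\mu\|_\infty^2$. I do not expect any genuine obstacle; the only delicate point is to exploit the independence of $B_{u-s}$ and $B_{v-s} - B_{u-s}$ to combine two Gaussian density bounds, producing the extra factor $1/\sqrt{v-u}$ that is essential to obtain the sharp exponent $3$ (a cruder bound using only a single one-time density estimate together with Cauchy--Schwarz would yield merely $(t-s)^{5/2}$).
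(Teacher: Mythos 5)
Your argument for part (iv) is correct and proceeds along the same lines as the source the paper cites for this part, namely Lemma~10 of \cite{MGY23}: on the event $A_u$ the factor $X^x_s+W_u-W_s-\xi$ is trapped at distance at most $\|\mu\|_\infty(u-s)$ from zero (because if $ab\le 0$ then $|b|\le|a-b|$, and here $a-b=\int_s^u\mu(X^x_r)\,dr$), which converts $A_u$ into a purely Brownian event depending on $\eta=X^x_s-\xi$ and the increment $B_{u-s}=W_u-W_s$; the double-integral expansion of the squared occupation time, the successive conditioning on $\mathcal F_s$ and then on $B_{u-s}$ to extract two Gaussian density bounds with the indispensable factor $1/\sqrt{v-u}$, and the Beta integral $\int_0^1\sqrt{r/(1-r)}\,dr=\pi/2$ all check out and yield the claimed bound with constant $\tfrac{2}{3}\|\mu\|_\infty^2$.

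However, your proposal only addresses part (iv) of the four-part lemma. Parts (ii) and (iii) are short consequences of the upper Gaussian bound in part (i): integrate the density over $[\xi-\varepsilon,\xi+\varepsilon]$ to get a $\varepsilon/\sqrt{t}$ bound and, for (iii), integrate once more over $t\in(0,s]$. Part (i) itself, though, is a genuine ingredient -- the two-sided Gaussian bound for the transition density of a scalar additive-noise SDE with bounded measurable drift -- which the paper takes from \cite[Theorems 3.1, 3.2]{QZ2004}. Nothing in your proposal covers it, and your argument for (iv) does not rely on it, so this is an unaddressed portion of the statement rather than a flaw in the part you wrote.
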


\begin{proof}
	Part (i) is a straightforward consequence of~\cite[Theorems 3.1, 3.2]{QZ2004}. Part (ii) follows  from the upper bound in part (i).
	Part (iii) can be proven in the same way as Lemma 4 in~\cite{MGY20}. For a detailed proof of part (iii) 
	see~\cite[Lemma 8]{MGY20arXiv}. Part (iv) is proven in the same way as Lemma 10 in~\cite{MGY23}.
\end{proof}

\subsection{The function $\mu_s$}\label{mumu}

For every $s\in (1/2,1)$ we define
\[
h_s\colon \R\to\R,\,\, x\mapsto \frac{1}{(e+|x|)^{1/2+s}\,\ln(e+|x|)}.
\]
Clearly, $h_s$ is bounded, even and for every $p\in [1,\infty)$ we have $h_s\in \Lc^p (\R)$. We define

\begin{equation}\label{mu}
	\mu_s \colon\R\to\R,\,\, x\mapsto 2\int_0^\infty \cos(xz)\, h_s(z)\, dz.
\end{equation}

Since $h_s$ is even and  $h_s\in \Lc^1(\R)$ we have 
\[
\mu_s = \F h_s\text{ a.e.}
\]
and $\mu_s$ is continuous and bounded.
We provide further properties of $\mu_s$ that are crucial for the proof of Theorem~\ref{thm2}.

\begin{lemma}\label{lem3} For every $s\in (1/2,1)$ we have\\[-.4cm]
	\begin{itemize}
		\item [(i)] $\forall x\in\R\backslash\{0\}\colon\, |\mu_s(x)| \le  \frac{4(3/2+s)}{x^2}$,\\[-.3cm]
		\item[(ii)] $\forall p\in [1,\infty)\colon\, \mu_s\in \Lc^p(\R)$,\\[-.3cm]
		\item[(iii)] $\mu_s\in \Wc^{s,2}$,\\[-.3cm]
		\item[(iv)]  $\forall p\in (2,\infty) \colon\, \mu_s\in \Wc^{s_p-,p}$, where $s_p = s-(1/2-1/p)$,\\[-.3cm]
		\item[(v)] $\exists c\in (0,\infty) \,\forall x,y\in \R\backslash\{0\}\colon\, |\mu_s(x)-\mu_s(y)| \le c \frac{|x-y|}{|x|}$,\\[-.3cm]    
		\item[(vi)] $\forall s'\in (1/2,s)\colon\, \mu_s\in C^{s'-1/2}(\R).$
	\end{itemize}
\end{lemma}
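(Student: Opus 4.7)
All six statements follow from the Fourier representation $\mu_s(x) = 2\int_0^\infty \cos(xz) h_s(z)\,dz$, successive integration by parts, and the Fourier characterization~\eqref{equiv} of $\Wc^{s,2}$. The one preliminary fact I would record is that on $[0,\infty)$ the function $h_s = AB$, with $A(z) = (e+z)^{-1/2-s}$ and $B(z) = 1/\ln(e+z)$, is the product of two positive, decreasing, convex functions; hence $(AB)'' = A''B + 2A'B' + AB'' > 0$ (all three summands are positive), so $h_s$ is convex, $h_s''\ge 0$, and $\int_0^\infty h_s''(z)\,dz = -h_s'(0) = (3/2+s)e^{-3/2-s}$. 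For (i) I integrate by parts twice: the boundary contributions at infinity vanish by decay of $h_s$ and $h_s'$, the boundary at $z=0$ after the second step produces $-2h_s'(0)/x^2$, and the remaining integral is bounded in absolute value by $(2/x^2)\int_0^\infty h_s''(z)\,dz$, which yields $|\mu_s(x)|\le 4|h_s'(0)|/x^2 \le 4(3/2+s)/x^2$. Part (ii) then follows at once because $\mu_s$ is bounded (as the Fourier transform of $h_s\in\Lc^1(\R)$) and decays like $1/x^2$ at infinity by (i).

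For (iii), since $h_s$ is even and lies in $\Lc^1(\R)\cap\Lc^2(\R)$, Fourier inversion gives $\F\mu_s = 2\pi h_s$ a.e., and~\eqref{equiv} reduces the problem to finiteness of $\int_\R |\xi|^{2s}/((e+|\xi|)^{1+2s}\ln^2(e+|\xi|))\,d\xi$; the integrand is bounded near $0$ and behaves like $1/(|\xi|\ln^2|\xi|)$ at infinity, so the integral is finite. Part (iv) then follows from the classical Sobolev-type embedding $\Wc^{s,2}(\R)\hookrightarrow \Wc^{s',p}(\R)$ valid for $s' < s - (1/2-1/p)$ and $p>2$; see, e.g.,~\cite{DNPV2012}.

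For (v), a single integration by parts gives $\mu_s(x) = -\frac{2}{x}\int_0^\infty \sin(xz) h_s'(z)\,dz$, and adding and subtracting $\frac{2}{x}\int_0^\infty \sin(yz) h_s'(z)\,dz$ together with the identity $\int_0^\infty \sin(yz) h_s'(z)\,dz = -y\mu_s(y)/2$ yields
\[
\mu_s(x) - \mu_s(y) = -\frac{2}{x}\int_0^\infty [\sin(xz)-\sin(yz)]\, h_s'(z)\,dz - \frac{(x-y)\mu_s(y)}{x}.
\]
The bound $|\mu_s(x)-\mu_s(y)|\le C|x-y|/|x|$ then follows from $|\sin(xz)-\sin(yz)| \le |x-y|z$ and the finiteness of $\int_0^\infty z|h_s'(z)|\,dz$ (the integrand decays like $1/(z^{1/2+s}\ln z)$ at infinity, integrable since $s>1/2$). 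For (vi) with $\alpha = s'-1/2 \in (0, s-1/2)$, using $|\cos(xz)-\cos(yz)|\le \min(2,|x-y|z) \le 2^{1-\alpha}(|x-y|z)^\alpha$ in the original representation gives
\[
|\mu_s(x)-\mu_s(y)| \le 2^{2-\alpha}|x-y|^\alpha \int_0^\infty z^\alpha h_s(z)\,dz,
\]
and the last integral is finite precisely when $\alpha < s - 1/2$, which is guaranteed by the choice of $s'$.

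The main obstacle across the six parts is the bookkeeping of weighted integrability of $h_s$, $h_s'$, $h_s''$ at infinity, which is where the hypothesis $s>1/2$ enters critically (particularly in (v) via $\int_0^\infty z|h_s'(z)|\,dz<\infty$). The most delicate conceptual step is (iv): if one prefers not to cite the Slobodeckij embedding, a direct proof would split the double integral into $\{|x-y|\ge 1\}$ (handled using $\mu_s\in\Lc^p(\R)$ from (ii)) and $\{|x-y|<1\}$ (handled by interpolating between the bounds in (v) and (vi) with exponents tuned so that the resulting powers of $|x-y|$ and $|x|^{-1}$ integrate to a finite quantity exactly in the range $s' < s_p$), and that interpolation is the trickiest computation of the six.
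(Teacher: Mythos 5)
Your arguments for parts (i), (ii), (iii), (v), and (vi) are correct and track the paper's proof closely. The only variations are cosmetic: for (i) you establish $h_s''>0$ via the observation that $h_s$ is a product of two positive, decreasing, convex functions on $[0,\infty)$ (the paper instead reads off monotonicity of $h_s'$ from an explicit formula), and for (v) you arrive at the same bound by an add-and-subtract identity rather than by splitting $|\sin(xz)/x - \sin(yz)/y|$ directly; both are equivalent one-line rearrangements.

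Part (iv) is where you depart substantively from the paper, and it is also where there are problems. Your primary route invokes the embedding $\Wc^{s,2}(\R)\hookrightarrow\Wc^{s',p}(\R)$ for $p>2$ and $s'<s-(1/2-1/p)$. This embedding is true (it is the Besov-space embedding $B^s_{2,2}\hookrightarrow B^{s'}_{p,p}$, found e.g.\ in Triebel's \emph{Theory of Function Spaces} or Bergh--L\"ofstr\"om), and indeed gives a cleaner proof than the paper's. But it is \emph{not} in~\cite{DNPV2012}: that survey contains the Lebesgue embedding $W^{s,p}\hookrightarrow L^q$ (Theorems 6.5--6.10) and the H\"older embedding (Theorem 8.2), not the Slobodeckij-to-Slobodeckij embedding with changing integrability exponent. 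The paper avoids the issue entirely by giving a self-contained proof via Hausdorff--Young: it bounds $\int_\R|\mu_s(x)-\mu_s(x+y)|^p\,dx\le c|y|^{s_p p}$ by passing to the Fourier side, where the inequality $|u||h_s(u)|\le 1$ delivers exactly the decay needed. That is a genuinely different mechanism, and it is worth understanding why, because it also exposes the second problem.

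Your fallback sketch for (iv) -- interpolating the pointwise bounds (v) and (vi) -- does \emph{not} reach the claimed range $s'<s_p$. Concretely, interpolating $|\mu_s(x)-\mu_s(x+y)|\le c\min(|y|/|x|,\,|y|^{s'-1/2})$ and optimizing the $|x|$-split yields, at best,
\[
\int_\R|\mu_s(x)-\mu_s(x+y)|^p\,dx\ \lesssim\ |y|^{(s'-1/2)p+(3/2-s')},
\]
so the near-diagonal $y$-integral in $[\mu_s]_{W^{t,p}}^p$ converges only for $t<(s'-1/2)+(3/2-s')/p$, and letting $s'\uparrow s$ gives the supremum
\[
t < s-\tfrac12+\tfrac{1}{p}-\tfrac{s-1/2}{p}\ =\ s_p-\tfrac{s-1/2}{p}\ <\ s_p.
\]
The deficit $(s-1/2)/p$ is intrinsic: the pointwise bounds (v) and (vi) only encode H\"older regularity of order $s-1/2$ and $O(1/|x|)$ decay of increments, whereas the extra $1/p$ in $s_p$ comes from $L^p$-averaging of the oscillation, which the Fourier decay $|h_s(z)|\lesssim (e+|z|)^{-1/2-s}(\ln(e+|z|))^{-1}$ (used via Hausdorff--Young in the paper's \eqref{zz3}) captures and a pointwise estimate cannot. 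So if you do not want to cite the Besov embedding from a different source, you need the paper's Fourier-analytic argument; the interpolation shortcut does not close the gap.
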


\begin{proof}
	Fix $s\in (1/2,1)$. First, note that $h_s$ is infinitely often differentiable on $(0,\infty)$ and  for every $x\in (0,\infty)$,
	\begin{equation*}
		\begin{aligned}
			h'_s(x) & = \frac{-(1/2+s)}{(e+x)^{3/2+s}} \, \frac{1}{\ln(e+x)} + \frac{1}{(e+x)^{1/2+s}} \, \frac{-1}{(e+x)\ln^2(e+x) }\\
			& = \frac{-1}{(e+x)^{3/2+s}\ln(e+x)} \,\Bigl( 1/2+s + \frac{1}{\ln (e+x)}\Bigr).
		\end{aligned}
	\end{equation*}
	In particular, for every $x\in (0,\infty)$,
	\begin{equation}\label{h0}
		| h_s'(x) |  = -h'(x) \le  \frac{3/2+s}{(e+x)^{3/2+s}\ln(e+x)}, 
	\end{equation}	
	and
	\begin{equation}\label{h1}
		h'_s(0+) = 	\frac{-(3/2+s)}{e^{3/2+s}} \quad\text{ and }\quad	\lim_{x\to\infty} h_s'(x) =\lim_{x\to\infty} h_s(x) =0.
	\end{equation}	
	Moreover, $h_s'$ is strictly increasing on $(0,\infty)$, which implies 
	\begin{equation}\label{h2}
		h_s'' > 0\text{ on }(0,\infty).
	\end{equation}	
	
	For the proof of part (i), let $x\in \R\backslash\{0\}$ and use partial integration as well as~\eqref{h1} to obtain 
	\begin{equation}\label{h3}
		\begin{aligned}
			\int_0^\infty \cos(xz)\, h_s(z)\, dz & = \frac{\sin(xz)}{x} \, h_s(z) \Bigr|^{z=\infty}_{z=0} - \int_0^\infty \frac{\sin(xz)}{x}\, h_s'(z)\, dz\\
			& =  \frac{\cos(xz)}{x^2} \, h_s'(z) \Bigr|^{z=\infty}_{z=0} - \int_0^\infty \frac{\cos(xz)}{x^2} \, h_s''(z)\, dz\\
			& = -\frac{h_s'(0+)}{x^2} - \int_0^\infty \frac{\cos(xz)}{x^2} \, h_s''(z)\, dz.
		\end{aligned}
	\end{equation}	
	Using~\eqref{h1} to ~\eqref{h3} we conclude that
	\[
	|\mu_s(x)|  \le \frac{2|h_s'(0+)|}{x^2} + \frac{2}{x^2} \int_0^\infty h_s''(z)\, dz = \frac{2}{x^2} \bigl(|h_s'(0+)| - h_s'(0+)   \bigr)\leq \frac{4(3/2+s)}{x^2},
	\]
	which completes the proof of part (i).

	For the proof of part (ii), let $p\in [1,\infty)$.
	Since $\mu_s$ is bounded we have
	\[
	\int_{-1}^1 |\mu_s(x)|^p\, dx <\infty.
	\]
	Using part (i) we obtain
	\begin{align*}
		\int_{\R\backslash [-1,1]} |\mu_s(x)|^p\, dx & \le 2(4(3/2+s))^p \int_1^\infty \frac{1}{x^{2p}}\, dx <\infty.
	\end{align*}
	Hence $\mu_s\in \Lc^p(\R)$. 
	
	We turn to the proof of part (iii). In view of part (ii) it suffices to show that
	\begin{equation}\label{z01}
		[\mu_s]_{W^{s,2}} < \infty.
	\end{equation}
	We have
	\begin{equation}\label{zzz1}
		\int_\R |x|^{2s}\, |h_s(x)|^2\, dx  = 2\int_0^\infty \frac{x^{2s}}{(e+x)^{2s+1}\,\ln^2(e+x)}\, dx
		\le 2\int_e^\infty \frac{1}{x\,\ln^2(x)} \, dx < \infty.
	\end{equation}
	Now, recall that $\mu_s = \F h_s$ 
	a.e.
	and $h_s\in \Lc^1(\R)\cap \Lc^2(\R)$ and 
	use~\eqref{equiv} as well as the Fourier inversion formula
	to obtain~\eqref{z01} from~\eqref{zzz1}.

	Next, we prove part (iv). Let $p\in (2,\infty)$. 
	In view of part (ii) and~\eqref{z0} it suffices to show that for every $\eps\in (0,s_p)$,  
	\begin{equation}\label{zz1}
		[\mu_s]_{W^{s_p-\eps/p,p}} < \infty.
	\end{equation}
	Put $t= s_p - \eps/p$. We have
	\begin{equation}\label{zz2}
		\begin{aligned}
			[\mu_s]_{W^{t,p}} & =\biggl( \int_\R \int_\R \frac{|\mu_s(x)-\mu_s(x+y)|^p}{|y|^{1+tp}}\, dx\, dy\biggr)^{1/p}\\
			& \le \biggl(\int_{ [-1,1 ] }  \frac{1}{|y|^{1+tp}}\int_\R |\mu_s(x)-\mu_s(x+y)|^p\, dx  \, dy \biggr)^{1/p} + 2^{1+1/p} (tp)^{-1/p}\, \|\mu_s\|_{\Lc^p}.
		\end{aligned}
	\end{equation}
	Fix $y\in\R$, recall that $h_s\in  \Lc^1(\R)\cap\Lc^2(\R)$ and note that  by part (ii) we have $\mu_s, \mu_s(\cdot + y)\in \Lc^1(\R)\cap\Lc^2(\R)$ as well. Employing the Hausdorff-Young inequality, the Fourier inversion formula and the fact that
	$|1-e^{-\bi u}| = 2|\sin(u/2)|$ and $|\sin(u)| \le |u|^\alpha$ for every $u\in \R$ and every $\alpha\in [0,1]$ we obtain
	\begin{equation}\label{zz3}
		\begin{aligned}
			\int_\R |\mu_s(x)-\mu_s(x+y)|^p\, dx 
			& \le (2\pi)^{-(p-1)}  \biggl(\int_\R |(\F\mu_s- \F\mu_s(\cdot +y))(x)|^{p/(p-1)}\, dx\biggr)^{p-1}\\	
			& = (2\pi)^{-(p-1)}  \biggl(\int_\R |(1-e^{-\bi xy})\F\mu_s(x)|^{p/(p-1)}\, dx\biggr)^{p-1}\\
			& =  (2\pi)^{-(p-1)}\biggl(\int_\R |2\sin(xy/2)|^{p/(p-1)} |h_s(x)|^{p/(p-1)}\, dx\biggr)^{p-1}\\  
			& \le  \biggl(\int_\R 2^{p/(p-1)} |xy/2|^{s_pp/(p-1)} |h_s(x)|^{p/(p-1)}\, dx\biggr)^{p-1}\\
			& = \frac{2^p|y|^{s_pp} }{2^{s_pp} } \biggl(\int_\R \frac{|x|^{s_pp/(p-1)}}{((e+|x|)^{1/2 +s}\ln(e+|x|))^{p/(p-1)}}\, dx\biggr)^{p-1}\\
			& \le \frac{2^p|y|^{s_pp} }{2^{s_pp} } \biggl(\int_\R \frac{1}{(e+|x|)(\ln(e+|x|))^{p/(p-1)} }\, dx\biggr)^{p-1}\\
			& \le c \, |y|^{s_pp},
		\end{aligned}
	\end{equation}
	where $c\in (0,\infty)$ only depends on  $p$.
	
	By~\eqref{zz3} we conclude that
	\begin{equation}\label{z4}
		\begin{aligned}
			\int_{ [-1,1 ] }  \frac{1}{|y|^{1+tp}}\int_\R |\mu_s(x)-\mu_s(x+y)|^p\, dx  \, dy & \le 
			c	\int_{ [-1,1 ] } 	 \frac{1}{|y|^{1-\eps}}\, dy <\infty.
		\end{aligned}
	\end{equation}	
	Combining~\eqref{zz2} with~\eqref{z4} and employing part (ii) yields ~\eqref{zz1}. 	
	
	We continue with the proof of part (v). Let $x,y\in\R\backslash \{0\}$. By \eqref{h3} we have
	\begin{equation}\label{zz5}
		\mu_s(x)-\mu_s(y) = - 2\int_0^\infty \Bigl(\frac{\sin(xz)}{x} - \frac{\sin(yz)}{y}\Bigr)\, h_s'(z)\, dz.
	\end{equation}
	For every $z\in\R$ we get
	\begin{equation}\label{zz6}
		\begin{aligned}
			\Bigl|\frac{\sin(xz)}{x} - \frac{\sin(yz)}{y}\Bigr| &  \le \Bigl|\frac{\sin(xz)-\sin(yz)}{x}\Bigr|  +   |\sin(yz)| \Bigl|\frac{1}{x} - \frac{1}{y}\Bigr|\\
			& \le \frac{|xz-yz|}{|x|} + |yz| \frac{|x-y|}{|xy|} =  \frac{2|x-y|}{|x|}\, |z|.
		\end{aligned}
	\end{equation}
	Combining~\eqref{zz5} with~\eqref{zz6} and employing~\eqref{h0} yields
	\begin{align*}
		|	\mu_s(x)-\mu_s(y)| & \le  4 \,  \frac{|x-y|}{|x|} \int_0^\infty z |h'(z)|\, dz \\
		& \le 4(3/2+s)  \, \frac{|x-y|}{|x|} \int_0^\infty \frac{1}{(e+z)^{1/2+s} \ln(e+z)}\, dz \le c\, \frac{|x-y|}{|x|},
	\end{align*}
	where $c\in (0,\infty)$ only depends on $s$.
	
	Finally, we prove part (vi). Let $s'\in (1/2,s)$ and let $x,y\in\R$. Clearly, for every $z\in(0,\infty)$,
	\begin{align*}
		|\cos(xz) - \cos(yz)| & \le 2^{3/2-s'} \, |\cos(xz) - \cos(yz)|^{s'-1/2} \\ &  \le 2^{3/2-s'} \, |zx-zy|^{s'-1/2} \le 2^{3/2-s'} |x-y|^{s'-1/2}\, |e+z|^{s'-1/2},
	\end{align*}
	and therefore,
	\begin{align*}
		|	\mu_s(x)-\mu_s(y)| & \le 2 \int_0^\infty |\cos(xz) - \cos(yz)| h_s(x)\, dz \\
		& \le 	2^{5/2-s'} \,|x-y|^{s'-1/2} \,\int_0^\infty  \frac{1}{(e+z)^{1+(s-s')} \, \ln(e+z)}\, dz \le c\, |x-y|^{s'-1/2},
	\end{align*}
	where $c\in (0,\infty)$ only depends on $s,s'$.
\end{proof}

\section{Proof of Theorem~\ref{thm2}}\label{proofthm}

In the following let 
$(\Omega,\mathcal A,\PP)$
be a probability space, let $W\colon [0,1]\times \Omega\to \R$ be a standard Brownian motion on $[0,1]$, 
let $x_0\in\R$,
let $\mu\colon \R\to\R$ be measurable and bounded  and let $X$ denote the strong solution of the corresponding SDE~\eqref{sde0}.

For a discretization $\pi =\{t_1,\dots,t_n\}$ of $[0,1]$ with $0 < t_1 <\dots <t_n = 1$  we use 
\[
e_2(\pi) = \inf_{        g \colon \R^n \to \R \text{ measurable}} \EE\bigl[  |X_1-g(W_{t_1},\dots,W_{t_n} )|^2\bigr]^{1/2}
\]
to denote the smallest possible 
$L^2$-error 
that can be achieved for approximating  $X_1$ based on  $W_{t_1},\dots,W_{t_n}$. To obtain a lower bound for $e_2(\pi)$ we construct, similar to~\cite{MGY23}, a Brownian motion $\tW^\pi\colon [0,1]\times \Omega\to\R$ that is coupled with $W$ at the points $t_1,\dots,t_n$ and we analyse the  
$L^2$-distance of the solution  $\tX^\pi$ of the 
SDE~\eqref{sde0} with driving Brownian motion $\tW^\pi$ and $X$ at the final time, see Lemma~\ref{lemf1}.

Put $t_0 =0$, let $\overline W^\pi\colon [0,1]\times \Omega\to\R$ denote the piecewise linear interpolation of $W$ on $[0,1]$ at the points $t_0, \ldots, t_{n}$, i.e. 
\[
\overline W^\pi_t=\tfrac{t-t_{i-1}}{t_i-t_{i-1}}\,W_{t_i}+\tfrac{t_i-t}{t_i-t_{i-1}}\, W_{t_{i-1}}, \quad t\in [t_{i-1}, t_i],
\]
for $i\in\{1, \ldots, n\}$, and  put
\[
B^\pi=W-\overline W^\pi.
\]
It is well known that $(B^\pi_t)_{t\in [t_{i-1}, t_i]}$ is a Brownian bridge on $[t_{i-1}, t_i]$ for every $i\in\{1, \ldots, n\}$ and that the stochastic processes
$(B^\pi_t)_{t\in [t_{0}, t_1]}, \ldots,(B^\pi_t)_{t\in [t_{n-1}, t_{n}]}, \overline W^\pi $  are independent. 
Without loss of generality, we may assume that $(\Omega,\mathcal A,\PP)$ 
is rich enough to carry 
for every $i\in\{1, \ldots, n\}$ a Brownian bridge $(\widetilde B^\pi_t)_{t\in [t_{i-1}, t_i]}$   on $[t_{i-1}, t_i]$
such that $(\widetilde B^\pi_t)_{t\in [t_{0}, t_1]}, \ldots,(\widetilde B^\pi_t)_{t\in [t_{n-1}, t_{n}]}, W $ are independent. Put $\widetilde B^\pi=(\widetilde B^\pi_t)_{t\in[0,1]}$ and
define a Brownian motion  $\widetilde W^\pi\colon [0,1]\times \Omega\to\R$  by
\[
\widetilde W^\pi= \overline W^\pi+\widetilde B^\pi.
\]
We use 
\begin{equation}\label{extra1}
	\tX^\pi = (\tX_t^\pi )_{t\in[0,1]}
\end{equation}
to denote the strong solution of the SDE~\eqref{sde0} with driving Brownian motion $\tW^\pi$ in place of $W$
and 
\[
\Fc^{W,\tW^\pi} = \bigl(\Fc^{W,\tW^\pi}_t = \sigma(\{(W_s,\tW^\pi_s)\colon s\in [0,t]\})\bigr)_{t\in [0,1]}
\]
to denote the filtration generated by the process $(W,\tW^\pi)$.  
Clearly, 
\begin{equation}\label{qx1}
	\forall i\in\{1,\dots,n\}\colon \, W_{t_i} = \tW^\pi_{t_i}
\end{equation}
and it is easy to check that 
\begin{equation}\label{qx2}
	\forall i\in\{1,\dots,n\}\colon \, \Fc^{W,\tW^\pi}_{t_i} \text{ and } \sigma\bigl(\{(W_t-W_{t_i},\tW^\pi_t-\tW^\pi_{t_i})\colon t\in[t_i,1]\}\bigr)\text{ are independent}.
\end{equation}

For all $n\in\N$ we put
\[
\Pi^n = \bigl\{\{t_1,\dots,t_n\}\colon 0<t_1< \dots <t_n=1\bigr\}
\]
and we define
\[
\Pi = \bigcup_{n\in\N} \Pi^n.
\]

See~\cite[Lemma 11]{MGY23} for a proof of the following result.

\begin{lemma}\label{lemf1} Let $\mu$ be measurable and bounded. Then for 
	every $\pi\in\Pi$, 
	\[
	e_2(\pi) \geq \frac{1}{2}\,   \EE[|X_1-\widetilde X^\pi_1|^2]^{1/2}.
	\]
\end{lemma}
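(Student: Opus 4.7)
The plan is to combine the triangle inequality in $L^2$ with a distributional symmetry argument that exploits both the fact that $W$ and $\widetilde W^\pi$ coincide on the grid $\pi$ and the fact that $\widetilde W^\pi$ is itself a standard Brownian motion of the same law as $W$.

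Fix $\pi=\{t_1,\dots,t_n\}\in\Pi$ and a measurable $g\colon\R^n\to\R$. If $\EE[|X_1-g(W_{t_1},\dots,W_{t_n})|^2]=\infty$ the desired bound is trivial, so I may assume this quantity is finite. The $L^2(\PP)$-triangle inequality then gives
\[
\EE[|X_1-\widetilde X^\pi_1|^2]^{1/2} \le \EE[|X_1-g(W_{t_1},\dots,W_{t_n})|^2]^{1/2}+\EE[|g(W_{t_1},\dots,W_{t_n})-\widetilde X^\pi_1|^2]^{1/2}.
\]
By \eqref{qx1}, $g(W_{t_1},\dots,W_{t_n})=g(\widetilde W^\pi_{t_1},\dots,\widetilde W^\pi_{t_n})$ almost surely, so the task reduces to proving the distributional identity
\[
\EE[|g(\widetilde W^\pi_{t_1},\dots,\widetilde W^\pi_{t_n})-\widetilde X^\pi_1|^2]=\EE[|g(W_{t_1},\dots,W_{t_n})-X_1|^2].
\]
Once this is in hand, the two error terms on the right of the triangle inequality are equal, giving $\EE[|X_1-\widetilde X^\pi_1|^2]^{1/2}\le 2\,\EE[|X_1-g(W_{t_1},\dots,W_{t_n})|^2]^{1/2}$, and taking the infimum over $g$ yields the claim.

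The identity above rests on two facts. First, $\widetilde W^\pi$ is itself a standard Brownian motion on $[0,1]$: by construction $\widetilde W^\pi=\overline W^\pi+\widetilde B^\pi$, where $\widetilde B^\pi$ is built from independent Brownian bridges on the subintervals of $\pi$ and is independent of $W$, hence of $\overline W^\pi$. Since $W=\overline W^\pi+B^\pi$ admits the analogous decomposition with $B^\pi$ having the same law as $\widetilde B^\pi$ and being independent of $\overline W^\pi$, this forces $\widetilde W^\pi\stackrel{d}{=}W$ as $C([0,1])$-valued random variables. Second, because $\mu$ is measurable and bounded, \eqref{sde0} admits a pathwise unique strong solution (see \cite{V80}), so there is a single measurable functional $F$ on Wiener path space such that $X=F(W)$ and $\widetilde X^\pi=F(\widetilde W^\pi)$ almost surely. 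Applying the measurable map $\omega\mapsto(F(\omega)(1),\omega_{t_1},\dots,\omega_{t_n})$ to the identity $\widetilde W^\pi\stackrel{d}{=}W$ yields
\[
(\widetilde X^\pi_1,\widetilde W^\pi_{t_1},\dots,\widetilde W^\pi_{t_n})\stackrel{d}{=}(X_1,W_{t_1},\dots,W_{t_n}),
\]
and the required equality of $L^2$-norms follows.

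The main subtle point is the appeal to pathwise uniqueness: one needs the same measurable functional $F$ to represent both $X$ and $\widetilde X^\pi$, because it is this common representation that transfers the joint law, not merely the marginal distribution of $\widetilde X^\pi_1$. Once $F$ is in place, the rest is bookkeeping of the coupling $W\leftrightarrow\widetilde W^\pi$.
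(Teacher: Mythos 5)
Your proof is correct and follows essentially the same route the paper sketches in Section~\ref{S1} and delegates to~\cite[Lemma 11]{MGY23}: the $L^2$-triangle inequality, the a.s.\ identity $g(W_{t_1},\dots,W_{t_n})=g(\widetilde W^\pi_{t_1},\dots,\widetilde W^\pi_{t_n})$ coming from~\eqref{qx1}, and the distributional equality $(\widetilde X^\pi_1,\widetilde W^\pi_{t_1},\dots,\widetilde W^\pi_{t_n})\stackrel{d}{=}(X_1,W_{t_1},\dots,W_{t_n})$, the latter justified exactly as in the paper via pathwise uniqueness and the representation of the strong solution as a measurable functional of the driving path (cf.\ the use of \cite[Theorem 1]{Ka96} in the proof of Lemma~\ref{lemf3}).
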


Next, we use the transformation $G_\mu$, see~\eqref{trans}, to switch from a solution of the SDE~\eqref{sde0} to a  solution of the SDE~\eqref{sde1}, see Lemma~\ref{lem2}. Let
\begin{equation}\label{extra2}
	Y=(G_\mu(X_t))_{t\in[0,1]}
\end{equation}
and for every $\pi\in \Pi$ define
\begin{equation}\label{extra3}
	\widetilde Y^\pi=(G_\mu(\widetilde X^\pi_t))_{t\in[0,1]}.
\end{equation}

\begin{lemma}\label{lemf2} Let $\mu$ be bounded and satisfy $\mu\in\Lc^1(\R)$. Then there exists $c\in (0,\infty)$ such that for  
	all $\pi \in \Pi$,
	\begin{equation}\label{gL2}
		\EE\bigl[|X_1-\widetilde X^\pi_1|^2\bigr]^{1/2} \geq c\,   \EE[|Y_1-\widetilde Y^\pi_1|^2]^{1/2}.
	\end{equation}
\end{lemma}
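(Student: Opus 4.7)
The plan is to directly exploit the Lipschitz continuity of $G_\mu$, which is given to us by Lemma~\ref{lem1}(i), and push the inequality through the $L^2$-norm.

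More precisely, Lemma~\ref{lem1}(i) (applied with the hypothesis $\mu\in \Lc^1(\R)$) gives a constant $c_2\in (0,\infty)$ such that $G_\mu' \le c_2$ on $\R$. Since $G_\mu\in C^1(\R)$, the mean value theorem yields
\[
|G_\mu(u)-G_\mu(v)| \le c_2\, |u-v|, \quad u,v\in\R.
\]
Recalling the definitions~\eqref{extra2} and~\eqref{extra3}, we apply this pointwise with $u=X_1(\omega)$ and $v=\widetilde X_1^\pi(\omega)$ to obtain
\[
|Y_1-\widetilde Y^\pi_1| = |G_\mu(X_1)-G_\mu(\widetilde X^\pi_1)| \le c_2\,|X_1-\widetilde X^\pi_1| \quad \PP\text{-a.s.}
\]

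Taking expectations of the squares and then square roots gives
\[
\EE\bigl[|Y_1-\widetilde Y^\pi_1|^2\bigr]^{1/2} \le c_2\,\EE\bigl[|X_1-\widetilde X^\pi_1|^2\bigr]^{1/2},
\]
which is~\eqref{gL2} with $c=c_2^{-1}$. Note that $c_2$ depends only on $\mu$ (through $\|\mu\|_{\Lc^1}$) and in particular is independent of $\pi$, as required.

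There is essentially no obstacle here: the lemma is a soft consequence of the global Lipschitz property of $G_\mu$ supplied by Lemma~\ref{lem1}(i). The only subtlety worth mentioning is that we need $\mu\in\Lc^1(\R)$ (not merely bounded) in order to invoke Lemma~\ref{lem1}(i); this hypothesis is indeed part of the statement of Lemma~\ref{lemf2}. Boundedness of $\mu$ is separately needed so that Lemma~\ref{lem2} applies and the processes $Y$ and $\widetilde Y^\pi$ are well-defined as strong solutions of the SDE~\eqref{sde1} (with driving noise $W$ and $\widetilde W^\pi$, respectively).
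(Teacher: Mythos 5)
Your proof is correct and takes essentially the same approach as the paper: apply the global Lipschitz bound for $G_\mu$ supplied by Lemma~\ref{lem1}(i) and pass to $L^2$-norms. (The paper cites ``Lemma~\ref{lem2}(i)'' at this point, which is a typo for Lemma~\ref{lem1}(i), so your reading matches the paper's actual argument.)
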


\begin{proof}
	By Lemma~\ref{lem2}(i), the transformation $G_\mu$ is Lipschitz continuous, which obviously implies the claimed estimate.
\end{proof}

Next, we turn to the analysis of the right hand side in~\eqref{gL2} in case that $\mu$ has sufficient Sobolev regularity. To this end we restrict to discretizations $\pi$ that behave sufficiently well.

For every $n\in\N$ we define 
\[
\tPi^n = \bigl\{   \{t_1,\dots, t_{5n}\}\colon 0<t_1 <\dots < t_{5n}=1, \{j/(4n)\colon j\in\{1,\dots, 4n\}\} \subset \{t_1,\dots,t_{5n} \}\bigr\}.
\]
Clearly, every $\pi\in \tPi^n$ satisfies
\begin{equation}\label{ndisc1}
	\max_{i\in \{1,\dots,5n\}} (t_i -t_{i-1}) \le 1/(4n).
\end{equation}
Moreover, it is easy to check that 
\begin{equation}\label{ndisc2}
	\forall \pi \in \Pi^n \, \exists \tpi\in \tPi^n\colon \, \pi \subset \tpi	
\end{equation}
and 
\begin{equation}\label{ndisc3}
	\forall \pi \in \tPi^n\colon \, \#\bigl\{ i\in \{2,\dots, 5n\}\colon t_{i-1}\ge 1/2\text{ and } t_i-t_{i-1} = 1/(4n)\} \ge n.
\end{equation}

\begin{lemma}\label{lemf3} Let $s\in (1/2,1)$ and assume that $\mu\colon\R\to\R$ is bounded and satisfies
	\[
	\mu\in\Lc^1(\R)
	\cap \Wc^{(s-1/4)-,4}.
	\]
	Then for every $c_0\in (0,s-1/2)$ there exist  $c_1,c_2\in (0,\infty)$ such that for all $n\in\N$, all $\pi=\{t_1,\dots,t_{5n}\}\in\tPi^n$ with $0<t_1<\dots <t_{5n}=1$ and all $ i\in \{1,\dots,5n\}$, 
	\begin{equation}\label{iter1}
		\begin{aligned}
			\EE\bigl[|Y_{t_i} - \widetilde Y^\pi_{t_i}|^2\bigr] & \ge \left(1-\frac{c_1}{n}\right)\, \EE\bigl[|Y_{t_{i-1}} - \widetilde Y^\pi_{t_{i-1}}|^2\bigr] \\
			& \qquad\qquad + c_2\, \EE\bigl[|(X_{t_i}- X_{t_{i-1}}) - (\tX^\pi_{t_i}- \tX^\pi_{t_{i-1}})|^2\bigr] - \frac{c_1}{n^{2+s+c_0}}.
		\end{aligned}
	\end{equation}
\end{lemma}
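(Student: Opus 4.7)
The plan is to first reduce the inequality to a lower bound for $\EE[\alpha_i^2]$ where $\alpha_i := (Y_{t_i}-Y_{t_{i-1}})-(\tY^\pi_{t_i}-\tY^\pi_{t_{i-1}})$, and then bound $\EE[\alpha_i^2]$ from below via a mean value expansion of $G_\mu$ combined with two successive applications of Young's inequality. Writing
\[
|Y_{t_i}-\tY^\pi_{t_i}|^2=|Y_{t_{i-1}}-\tY^\pi_{t_{i-1}}|^2+2(Y_{t_{i-1}}-\tY^\pi_{t_{i-1}})\alpha_i+\alpha_i^2,
\]
the cross term vanishes in expectation: by Lemma~\ref{lem2} both $Y$ and $\tY^\pi$ solve the driftless SDE~\eqref{sde1} driven by $W$ and $\tW^\pi$ respectively, and by~\eqref{qx2} the increments $(W_t-W_{t_{i-1}})_{t\ge t_{i-1}}$ and $(\tW^\pi_t-\tW^\pi_{t_{i-1}})_{t\ge t_{i-1}}$ are both independent of $\Fc^{W,\tW^\pi}_{t_{i-1}}$, so the martingale property of $Y$ and $\tY^\pi$ with respect to their own Brownian filtrations yields $\EE[\alpha_i\mid\Fc^{W,\tW^\pi}_{t_{i-1}}]=0$. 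Hence~\eqref{iter1} is equivalent to
\[
\EE[\alpha_i^2]\ge c_2\EE[\beta_i^2]-(c_1/n)\EE[|Y_{t_{i-1}}-\tY^\pi_{t_{i-1}}|^2]-c_1/n^{2+s+c_0},
\]
with $\beta_i:=(X_{t_i}-X_{t_{i-1}})-(\tX^\pi_{t_i}-\tX^\pi_{t_{i-1}})$.

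For this lower bound, the integral mean value theorem gives $Y_{t_i}-Y_{t_{i-1}}=\phi_i(X_{t_i}-X_{t_{i-1}})$ with $\phi_i:=\int_0^1 G_\mu'(X_{t_{i-1}}+u(X_{t_i}-X_{t_{i-1}}))\,du$, and analogously $\tilde\phi_i$ for $\tX^\pi$. By Lemma~\ref{lem1}, $\phi_i,\tilde\phi_i\in[c_G,C_G]$ for positive constants $c_G\le C_G$, and $G_\mu'$ is Lipschitz with some constant $L$. Setting $\xi:=X_{t_{i-1}}-\tX^\pi_{t_{i-1}}$ and $\Delta:=\tX^\pi_{t_i}-\tX^\pi_{t_{i-1}}$, I obtain the decomposition $\alpha_i=\phi_i\beta_i+(\phi_i-\tilde\phi_i)\Delta$ and the pointwise estimate $|\phi_i-\tilde\phi_i|\le L|\xi|+(L/2)|\beta_i|$. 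Applying $(A+B)^2\ge A^2/2-B^2$ yields
\[
\EE[\alpha_i^2]\ge \tfrac12\,c_G^2\,\EE[\beta_i^2]-2L^2\,\EE[\xi^2\Delta^2]-(L^2/2)\,\EE[\beta_i^2\Delta^2].
\]

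The $\xi^2\Delta^2$ term is controlled using boundedness of $\mu$: conditioning on $\Fc^{W,\tW^\pi}_{t_{i-1}}$ and exploiting~\eqref{qx2} (so that $(\tW^\pi_t-\tW^\pi_{t_{i-1}})_{t\ge t_{i-1}}$ is a Brownian motion independent of this $\sigma$-algebra), one gets $\EE[\Delta^2\mid\Fc^{W,\tW^\pi}_{t_{i-1}}]\le C/n$ pointwise, hence $\EE[\xi^2\Delta^2]\le(C/n)\EE[\xi^2]\le(C/(c_G^2 n))\EE[|Y_{t_{i-1}}-\tY^\pi_{t_{i-1}}|^2]$ by the bi-Lipschitz property of $G_\mu$ (Lemma~\ref{lem1}), providing the $(c_1/n)\EE[|Y_{t_{i-1}}-\tY^\pi_{t_{i-1}}|^2]$ slack. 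For the $\beta_i^2\Delta^2$ term, the pointwise bound $|\beta_i|\le\|\mu\|_\infty(t_i-t_{i-1})\le C/n$ gives $\EE[\beta_i^4]\le(C/n)^2\EE[\beta_i^2]$, which combined with $\EE[\Delta^4]\le C/n^2$ via Cauchy--Schwarz yields $\EE[\beta_i^2\Delta^2]\le(C/n^2)\EE[\beta_i^2]^{1/2}$. A second Young's inequality with $a=\EE[\beta_i^2]^{1/2}$, $b=C/n^2$ and parameter proportional to $c_G^2/L^2$ absorbs one half of $(c_G^2/2)\EE[\beta_i^2]$ back into the main term and leaves a residual of order $1/n^4$. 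Since $c_0<s-1/2$ implies $2+s+c_0<2s+3/2<4$, this residual is dominated by $c_1/n^{2+s+c_0}$; adding $\EE[|Y_{t_{i-1}}-\tY^\pi_{t_{i-1}}|^2]$ on both sides then yields~\eqref{iter1}.

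The main obstacle is this last Young's step. A naive bound $\EE[\beta_i^2\Delta^2]\le(C/n)^2\EE[\Delta^2]\le C/n^3$ would produce only a $1/n^3$ residual, which fails to dominate $1/n^{2+s+c_0}$ as soon as $s+c_0>1$ (permitted since $c_0$ can approach $s-1/2$). Routing instead through Cauchy--Schwarz with the higher moment bound $\EE[\Delta^4]\le C/n^2$, and then trading the resulting $\EE[\beta_i^2]^{1/2}$ factor against a partial absorption into the main $\EE[\beta_i^2]$ term, is what gains the crucial extra factor of $1/n$ and drops the residual to the required $1/n^4$.
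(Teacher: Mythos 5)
Your proof is correct, and it takes a genuinely different — and in one respect cleaner — route than the paper. The paper's proof decomposes $\alpha_i = \beta_i^{\text{paper}} + \gamma_i + \delta_i$ (via centering each integral of $G_\mu'$ around the left endpoint $\tX^\pi_{t_{i-1}}$ resp. $\tX^\pi_{t_i}$), with $\delta_i$ the main term. The remainder $\beta_i^{\text{paper}}$ is of order $|X_{t_i}-\tX^\pi_{t_i}|^2 + |X_{t_{i-1}}-\tX^\pi_{t_{i-1}}|^2$, and bounding $\EE[(\beta_i^{\text{paper}})^2]$ requires a fourth-moment estimate on $X_t-\tX^\pi_t$, which the paper obtains from Theorem~\ref{thm1} applied with $p=4$ — this is precisely why the hypothesis $\mu\in\Wc^{(s-1/4)-,4}$ appears. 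Your mean-value decomposition $\alpha_i=\phi_i\beta_i+(\phi_i-\tilde\phi_i)\Delta$ pushes the quadratic remainder into $(\phi_i-\tilde\phi_i)^2\Delta^2\le (2L^2\xi^2+\tfrac{L^2}{2}\beta_i^2)\Delta^2$, and your key observation — that $\beta_i=\int_{t_{i-1}}^{t_i}(\mu(X_t)-\mu(\tX^\pi_t))\,dt$ is deterministically $O(1/n)$ because the Brownian increments cancel by~\eqref{qx1} — lets you handle $\EE[\beta_i^2\Delta^2]$ by Cauchy--Schwarz plus a Young trade entirely without the Euler-scheme error bound or the $\Wc^{(s-1/4)-,4}$ hypothesis, and yields a residual $O(1/n^4)$ that strictly dominates the required $1/n^{2+s+c_0}$. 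So in fact your argument establishes the lemma under weaker assumptions (bounded, $\Lc^1$, nothing about Sobolev regularity), which the paper's route does not.

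One small caveat: the step ``martingale property of $Y,\tY^\pi$ with respect to their own Brownian filtrations together with~\eqref{qx2} yields $\EE[\alpha_i\mid\Fc^{W,\tW^\pi}_{t_{i-1}}]=0$'' is stated too quickly. The filtration $\Fc^{W,\tW^\pi}$ is strictly larger than $\Fc^W$, and~\eqref{qx2} only records independence of increments at the gridpoints; for the stochastic integral $\int_{t_{i-1}}^{t_i}b_\mu(Y_s)\,dW_s$ one needs to argue that the extra information (the Brownian bridges $\tilde B^\pi$) is independent of $W$, so that $\Fc^{W,\tW^\pi}_{t_{i-1}}\subset\sigma(W_u\colon u\le t_{i-1})\vee\sigma(\tilde B^\pi)$ and the tower property over that larger $\sigma$-algebra applies. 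The paper's proof circumvents this by explicitly disintegrating over the law of $\tilde B^\pi$ in~\eqref{rstuv2}. Your conclusion is correct, but spelling out this filtration-enlargement step (or disintegrating as the paper does) is needed to make the orthogonality argument airtight.
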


\begin{proof}
	Fix $c_0\in (0,s-1/2)$, let $n\in\N$ and  $\pi=\{t_1,\dots,t_{5n}\}\in\tPi^n$ with $0<t_1<\dots <t_{5n}=1$ and
	put 
	\[
	\Delta_i  = \EE\bigl[|Y_{t_i} - \widetilde Y^\pi_{t_i}|^2\bigr]^{1/2}
	\]
	for  $i\in\{0,\dots,5n\}$ and 
	\[
	\alpha_i = (Y_{t_i}- \tY^\pi_{t_i}) - (Y_{t_{i-1}}- \tY^\pi_{t_{i-1}})
	\]
	for  $i\in\{1,\dots,5n\}$.  
	
			Fix $i\in\{1,\dots,5n\}$. Throughout the following we use $c\in (0,\infty)$ to denote a positive constant that does not depend on $n$ or $\pi$ or $i$. The value of $c$ may change from 
	line to line.
	By Lemma~\ref{lem1}(i) and Lemma~\ref{lem2}, the stochastic process $(b_\mu(Y_t))_{t\in[0,1]}$ is  bounded, measurable and adapted to 
	$\Fc=\bigl(\Fc_t = \sigma(\{W_s\colon s\in [0,t]\}\cup \mathcal N(\PP))\bigr)_{t\in [0,1]}$, where $\mathcal N(\PP)$ is the collection of $\PP$-null sets.
	Hence, 	for all $r,t\in[0,1]$,
	\begin{equation}\label{rstuv1}
		\EE[\tY_r^\pi \tY^\pi_t]	=	\EE[Y_r Y_t]  = \EE\biggl[\int_0^{\min(r,t)} b_\mu^2(Y_u)\, du\biggr] + G^2_\mu(x_0).
	\end{equation}	
     By the Lipschitz continuity of $b_\mu$,  strong existence and pathwise uniqueness hold for the
   SDE \eqref{sdeY}. Hence, there exists a measurable function $g\colon (C([0,t_{i-1}];\R))^2\to \R$  such that $\PP$-almost surely, 
	\[
	\tY_{t_{i-1}}^\pi = g((W_u)_{u\in [0,t_{i-1}]}, (\widetilde B^\pi_u)_{u\in [0,t_{i-1}]}),
	\]
	see, e.g., \cite[Theorem 1]{Ka96}.
	 Since $W$ and $\widetilde B^\pi$ are independent and $(W_u)_{u\in [0,t_{i-1}]}$ is measurable with respect to $\Fc_{t_{i-1}}$,
	we conclude that
	\begin{equation}\label{rstuv2}
		\begin{aligned}
			& \EE[\tY_{t_{i-1}}^\pi  (	Y_{t_i}-Y_{t_{i-1}})] \\
			& \qquad\qquad = \EE\Bigl[g\bigl((W_u)_{u\in [0,t_{i-1}]}, (\widetilde B^\pi_u)_{u\in [0,t_{i-1}]}\bigr) \int_{t_{i-1}}^{t_i} b_\mu(Y_s)\, dW_s\Bigr]\\
			& \qquad\qquad = \int_{C([0,t_{i-1}];\R)} \EE\Bigl[g\bigl((W_u)_{u\in [0,t_{i-1}]}, f\bigr) \int_{t_{i-1}}^{t_i} b_\mu(Y_s)\, dW_s\Bigr]\, \PP^{(\widetilde B^\pi_u)_{u\in [0,t_{i-1}]} }(df)\\
			& \qquad\qquad = \int_{C([0,t_{i-1}];\R)} \EE\Bigl[ \int_{t_{i-1}}^{t_i} g\bigl((W_u)_{u\in [0,t_{i-1}]}, f\bigr) b_\mu(Y_s)\, dW_s\Bigr]\, \PP^{(\widetilde B^\pi_u)_{u\in [0,t_{i-1}]} }(df).
		\end{aligned} 
	\end{equation}
	By boundedness of $b_\mu$ we have
	\[
	\EE\bigl[\sup_{s\in [t_{i-1},t_i]}| \tY_{t_{i-1}}^\pi b_\mu(Y_s)|^2 \bigr] \le \|b_\mu\|_\infty \EE \bigl[|\tY_{t_{i-1}}^\pi|^2\bigr] <\infty,
		\]
	which implies that for $\PP^{(\widetilde B^\pi_u)_{u\in [0,t_{i-1}]} }$-almost all $f\in C([0,t_{i-1}];\R)$,
	\[
	\EE\bigl[\sup_{s\in [t_{i-1},t_i]}|g\bigl((W_u)_{u\in [0,t_{i-1}]}, f\bigr) b_\mu(Y_s)|^2 \bigr]  <\infty.
	\]
	Hence, for $\PP^{(\widetilde B^\pi_u)_{u\in [0,t_{i-1}]} }$-almost all $f\in C([0,t_{i-1}];\R)$,
	\[
	\EE\Bigl[ \int_{t_{i-1}}^{t_i} g\bigl((W_u)_{u\in [0,t_{i-1}]}, f\bigr) b_\mu(Y_s)\, dW_s\Bigr] =0,
	\]
	which jointly with~\eqref{rstuv2} yields
	\begin{equation}\label{rstuv3}
		\EE[\tY_{t_{i-1}}^\pi  (	Y_{t_i}-Y_{t_{i-1}})] =0.
	\end{equation}
	For reasons of symmetry, we also have 
	\begin{equation}\label{rstuv4}
		\EE[Y_{t_{i-1}}  (	\tY^\pi_{t_i}-\tY^\pi_{t_{i-1}})] =0.
	\end{equation}

		By~\eqref{rstuv1}, ~\eqref{rstuv3} and ~\eqref{rstuv4} we obtain that for all $U\in \{Y_{t_{i-1}}, \tY^\pi_{t_{i-1}}\}$ and $V\in \{Y_{t_i}-Y_{t_{i-1}}, \tY^\pi_{t_i}-\tY^\pi_{t_{i-1}}\}$,
	\[
	\EE[ UV] = 0.
	\]
	As a consequence we get $\EE[ (Y_{t_{i-1}}- \tY^\pi_{t_{i-1}})\, \alpha_i] =0$, and therefore 
	\begin{equation}\label{q1}
		\Delta_i^2 = \Delta_{i-1}^2 + 2\EE[ (Y_{t_{i-1}}- \tY^\pi_{t_{i-1}})\, \alpha_i] + \EE[\alpha_i^2] =  \Delta_{i-1}^2 + \EE[\alpha_i^2].
	\end{equation}
	
	By the smoothness properties of the function $G_\mu$, see Lemma~\ref{lem1}, we derive
	\begin{align*}
		\alpha_i & = (G_\mu(X_{t_i}) - G_\mu(\tX^\pi_{t_i})) -  (G_\mu(X_{t_{i-1}}) - G_\mu(\tX^\pi_{t_{i-1}})) \\
		& = \int_{\tX^\pi_{t_i}}^{X_{t_i}} G_\mu'(u)\, du -  \int_{\tX^\pi_{t_{i-1}}}^{X_{t_{i-1}}} G_\mu'(u)\, du \\
		& = \beta_i + \gamma_i +\delta_i,
	\end{align*}
	where
	\begin{align*}
		\beta_i & = \int_{\tX^\pi_{t_i}}^{X_{t_i}} (G_\mu'(u) - G_\mu'(\tX^\pi_{t_i}))\, du -  \int_{\tX^\pi_{t_{i-1}}}^{X_{t_{i-1}}} (G_\mu'(u)-G_\mu'(\tX^\pi_{t_{i-1}}))\, du,\\
		\gamma_i & = \bigl(G_\mu'(\tX^\pi_{t_i})-G_\mu'(\tX^\pi_{t_{i-1}})\bigr)\, (X_{t_{i-1}} - \tX^\pi_{t_{i-1}}),\\
		\delta_i & = G_\mu'(\tX^\pi_{t_i})\, \bigl( (X_{t_i} - \tX^\pi_{t_i}) - (X_{t_{i-1}} - \tX^\pi_{t_{i-1}})\bigr).
	\end{align*}
	Hence
	\begin{equation}\label{q2}
		\EE\bigl[\alpha_i^2\bigr] \ge \frac{1}{2}\EE\bigl[\delta_i^2\bigr] - \EE\bigl[(\beta_i + \gamma_i)^2\bigr]
		\ge \frac{1}{2}\EE\bigl[\delta_i^2\bigr] - 2\EE\bigl[\beta_i^2\bigr] -2\EE\bigl [\gamma_i^2\bigr].
	\end{equation}
	
	We proceed by estimating $\EE[\beta_i^2]$. Employing again Lemma~\ref{lem1} we get
	\begin{equation}\label{q3}
		\begin{aligned}
			\beta_i^2& = \biggl| -2\int_{\tX^\pi_{t_i}}^{X_{t_i}} \int_{\tX^\pi_{t_i}}^u\mu(v)G_\mu'(v)\, dv\, du +2  \int_{\tX^\pi_{t_{i-1}}}^{X_{t_{i-1}}} \int_{\tX^\pi_{t_{i-1}}}^u \mu(v)G_\mu'(v)\, dv\, du\biggr|^2\\
			& \le \|\mu\|_\infty^2\, \|G_\mu'\|_\infty^2 \,\bigl( |X_{t_i}-\tX^\pi_{t_i}  |^2 +  |X_{t_{i-1}}-\tX^\pi_{t_{i-1}}|^2\bigr)^2.
		\end{aligned}
	\end{equation} 
	Let $X^\text{E}_{4n}$ and $\tX^{\pi,\text{E}}_{4n}$ denote the continuous-time Euler schemes with $4n$ equidistant steps for $X$ and $\tX^\pi$, respectively, and let $\uti=\lfloor t_i 4n\rfloor/(4n)$. 
	By~\eqref{qx1} we have  $X^\text{E}_{4n,t} = \tX^{\pi,\text{E}}_{4n,t}$ for every 
	$t\in \{j/(4n)\colon j\in\{0,\dots, 4n\}\}$
	and therefore
	\begin{align*}
		X_{t_i}-\tX^\pi_{t_i} & = (X_{\uti} - \tX^\pi_{\uti}) +	(X_{t_i} - X_{\uti } ) - (\tX^\pi_{t_i} - \tX^\pi_{\uti } ) \\
		& =(X_{\uti} -X^\text{E}_{4n,\uti})  + (\tX^{\pi,\text{E}}_{4n,\uti}-\tX^\pi_{\uti}) + \int_{\uti}^{t_i} ( \mu(X_s) - \mu(\tX^\pi_s))\, ds,
	\end{align*}
	which implies
	\begin{equation}\label{rst44}
		|X_{t_i}-\tX^\pi_{t_i}| \le \|X -X^\text{E}_{4n}\|_\infty  + \|\tX^\pi-\tX^{\pi,\text{E}}_{4n}\|_\infty + \frac{\|\mu\|_\infty}{2n}.
	\end{equation}

	Put
	\[
	\delta= \eps = (s-1/2-c_0)/6.
	\]
	Then $\delta,\eps \in (0,\infty)$ and $s-\delta-1/4 >0$. Since $\mu\in \Wc^{(s-1/4)-,4}$ we have $\mu\in \Wc^{s-\delta-1/4,4}$ and therefore we may apply                                                                           
	Theorem~\ref{thm1} with $p=4$ and $s-\delta-1/4$ in place of $s$  to obtain
	\begin{equation}\label{rst55}
		\EE\bigl[ \|X -X^\text{E}_{4n}\|^4_\infty\bigr] = \EE\bigl[ \|\tX^\pi-\tX^{\pi,\text{E}}_{4n}\|^4_\infty\bigr]  \le \frac{c}{n^{2(1+s-\delta-1/4)-4 \eps}} = \frac{c}{n^{2+s + c_0}}.
	\end{equation}
	Note that $2+s + c_0 < 2+2s -1/2 < 4$. Combining~\eqref{rst44} with~\eqref{rst55} thus yields 
	\[
	\EE\bigl[	|X_{t_i}-\tX^\pi_{t_i}|^4\bigr] \le \frac{c}{n^{2+s + c_0}}
	\]
	and the same estimate holds with $t_i$ replaced by $t_{i-1}$. Combining these estimates with the bound~\eqref{q3} we conclude that 
	\begin{equation}\label{q4}
		\EE\bigl[\beta_i^2\bigr] \le \frac{c}{n^{2+s +c_0}}.
	\end{equation}

	Next, we provide an upper bound for $\EE\bigl[\gamma_i^2\bigr]$. Clearly, $X_{t_{i-1}}$ and $\tX^\pi_{t_{i-1}}$ are measurable with respect to $\Fc^{W,\tW^\pi}_{t_{i-1}}$, and therefore,
	\begin{equation}\label{q5}
		\EE\bigl[\gamma_i^2\bigr] = \EE\bigl[(X_{t_{i-1}}-\tX^\pi_{t_{i-1}})^2\, \EE\bigl[ (G_\mu'(\tX^\pi_{t_i})-G_\mu'(\tX^\pi_{t_{i-1}}))^2\bigr|\Fc^{W,\tW^\pi}_{t_{i-1}}\bigr]\bigr].
	\end{equation}
	By Lemma~\ref{lem1} we obtain
	\begin{equation}\label{q6}
		(X_{t_{i-1}}-\tX^\pi_{t_{i-1}})^2 = (G_\mu^{-1}(Y_{t_{i-1}})-G_\mu^{-1}(\tY^\pi_{t_{i-1}}))^2 \le \|(G_\mu^{-1})'\|_\infty^2\, 	(Y_{t_{i-1}}-\tY^\pi_{t_{i-1}})^2
	\end{equation}
	as well as
	\begin{equation}\label{q7}
		\begin{aligned}
			(G_\mu'(\tX^\pi_{t_i})-G_\mu'(\tX^\pi_{t_{i-1}}))^2 & = \biggl(\int_{\tX^\pi_{t_{i-1}}}^{\tX^\pi_{t_i}} G_\mu''(u)\, du\biggr)^2 \\
			& \le 4\|\mu\|_\infty^2 \|G_\mu'\|_\infty^2\, (\tX^\pi_{t_{i}}-\tX^\pi_{t_{i-1}})^2\\
			& = 4\|\mu\|_\infty^2 \|G_\mu'\|_\infty^2 \biggl(\int_{t_{i-1}}^{t_i} \mu (\tX^\pi_t)\, dt + W_{t_{i}}-W_{t_{i-1}}\biggr)^2\\
			& \le 8\|\mu\|_\infty^2 \|G_\mu'\|_\infty^2 \left( \frac{\|\mu\|_\infty^2}{ 16n^2} +  (W_{t_{i}}-W_{t_{i-1}})^2\right).
		\end{aligned}
	\end{equation}
	Using~\eqref{qx2} we derive from~\eqref{q7} that
	\begin{equation}\label{q8}
		\begin{aligned}
			\EE\bigl[ (G_\mu'(\tX^\pi_{t_i})-G_\mu'(\tX^\pi_{t_{i-1}}))^2\bigr|\Fc^{W,\tW^\pi}_{t_{i-1}}\bigr] 
			& \le c \left(\frac{1}{ n^2}+ \EE[ (W_{t_{i}}-W_{t_{i-1}})^2|\Fc^{W,\tW^\pi}_{t_{i-1}}]\right)\\
			&\le c \left( \frac{1}{ n^2} + \frac{1}{4n}\right).
		\end{aligned}	
	\end{equation}
	Combining~\eqref{q5}, ~\eqref{q6} and~\eqref{q8} we get 
	\begin{equation}\label{q9}
		\EE\bigl[\gamma_i^2\bigr] \le \frac{c}{n} \Delta_{i-1}^2.	
	\end{equation}
	Combining~\eqref{q2}, ~\eqref{q4} and~\eqref{q9} yields 
	\begin{equation}\label{q10}
		\EE\bigl[\alpha_i^2\bigr] \ge \frac{1}{2}\EE[\delta_i^2] - \frac{c}{n} \Delta_{i-1}^2 - \frac{c}{n^{2+s+c_0}}.
	\end{equation}
	Finally, combine~\eqref{q1} with \eqref{q10}, use 
	\[
	\EE\bigl[\delta_i^2\bigr] \ge \inf_{x\in\R} |G_\mu'(x)|^2\, \EE\bigl[|(X_{t_i}- X_{t_{i-1}}) - (\tX^\pi_{t_i}- \tX^\pi_{t_{i-1}})|^2\bigr] 
	\] 
	and observe that $\inf_{x\in\R} |G_\mu'(x)| >0$, see Lemma~\ref{lem1}, to complete the proof of the lemma.
\end{proof}

We proceed with the analysis of the term $\EE\bigl[|(X_{t_i}- X_{t_{i-1}}) - (\tX^\pi_{t_i}- \tX^\pi_{t_{i-1}})|^2\bigr] $ 
in the case of $\mu = \mu_s$.

\begin{lemma}\label{lemf4}
	Let $s\in (1/2,1)$ and let $\mu = \mu_s$. Then there exist $c_0,c_1\in (0,\infty)$ such that for  all $n\in\N$, all $\pi=\{t_1,\dots,t_{5n}\}\in\tPi^n$ with
	$0<t_1<\dots <t_{5n}=1$ and all $ i\in \{1,\dots,5n\}$ with $t_{i-1} \ge 1/2$,
	\begin{equation}\label{iter2a}
		\begin{aligned}
			& \EE\bigl[|(X_{t_i}- X_{t_{i-1}}) - (\tX^\pi_{t_i}- \tX^\pi_{t_{i-1}})|^2\bigr] \\
			& \qquad \ge \frac{1}{2} \EE\Bigl[ \Bigl| \int_{t_{i-1}}^{t_i} \bigl (\mu_s (X_{t_{i-1} } +W_t - W_{t_{i-1} } ) -  \mu_s (X_{t_{i-1}} +\tW^\pi_t - \tW^\pi_{t_{i-1}})  \bigr)\,
			dt\Bigr|^2\Bigr] \\
			& \qquad\qquad - \frac{c_1}{ n}  \EE\bigl[|Y_{t_{i-1}} - \widetilde Y^\pi_{t_{i-1}}|^2\bigr]-\frac{c_1}{ n^{2+s+c_0}}.
		\end{aligned}
	\end{equation}
\end{lemma}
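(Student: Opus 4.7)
The plan rests on the observation that, since $\pi=\{t_1,\dots,t_{5n}\}$ contains both $t_{i-1}$ and $t_i$ (with the convention $t_0=0$), the coupling identity~\eqref{qx1} gives $W_{t_i}-W_{t_{i-1}}=\tW^\pi_{t_i}-\tW^\pi_{t_{i-1}}$. Subtracting the two integrated SDEs~\eqref{sde0} thus yields the exact formula
\[
A_i:=(X_{t_i}-X_{t_{i-1}})-(\tX^\pi_{t_i}-\tX^\pi_{t_{i-1}})=\int_{t_{i-1}}^{t_i}\bigl(\mu_s(X_t)-\mu_s(\tX^\pi_t)\bigr)\,dt.
\]
Let $B_i$ denote the integral on the right hand side of~\eqref{iter2a}. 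The elementary bound $|A_i|^2\ge \tfrac12|B_i|^2-|A_i-B_i|^2$ reduces the lemma to the remainder estimate
\[
\EE[|A_i-B_i|^2]\le \tfrac{c_1}{n}\,\EE\bigl[|Y_{t_{i-1}}-\tY^\pi_{t_{i-1}}|^2\bigr]+\tfrac{c_1}{n^{2+s+c_0}}.
\]

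Setting $U_t=X_{t_{i-1}}+(W_t-W_{t_{i-1}})$ and $\tilde U_t=X_{t_{i-1}}+(\tW^\pi_t-\tW^\pi_{t_{i-1}})$, I write $A_i-B_i=J_1-J_2$ with
\[
J_1=\int_{t_{i-1}}^{t_i}(\mu_s(X_t)-\mu_s(U_t))\,dt,\qquad J_2=\int_{t_{i-1}}^{t_i}(\mu_s(\tX^\pi_t)-\mu_s(\tilde U_t))\,dt,
\]
and bound $\EE[|A_i-B_i|^2]\le 2\EE[J_1^2]+2\EE[J_2^2]$. In $J_1$ the two arguments of $\mu_s$ differ by $\int_{t_{i-1}}^t\mu_s(X_r)\,dr$, of magnitude at most $\|\mu_s\|_\infty/(4n)$. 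In $J_2$ they additionally differ by the displacement $\tX^\pi_{t_{i-1}}-X_{t_{i-1}}$, which by the bi-Lipschitz property of $G_\mu$ (Lemma~\ref{lem1}) is comparable in absolute value to $Y_{t_{i-1}}-\tY^\pi_{t_{i-1}}$, and, by Theorem~\ref{thm1} applied to the $4n$-step Euler scheme with $p=4$ exactly as in the proof of Lemma~\ref{lemf3}, also satisfies $\EE[|\tX^\pi_{t_{i-1}}-X_{t_{i-1}}|^4]\le c/n^{2+s+c_0}$ for suitable $c_0\in(0,s-1/2)$.

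For both $J_1$ and $J_2$ the central tool is the off-diagonal estimate $|\mu_s(x)-\mu_s(y)|\le c|x-y|/|x|$ from Lemma~\ref{lem3}(v), which is Lipschitz-like as long as $x$ stays away from the origin but blows up at $0$. I introduce a threshold $\varepsilon=\varepsilon(n)$ and a level $\xi$ close to $0$ and split each integrand into three pieces: (a) a ``good'' piece where both arguments of $\mu_s$ lie outside $(\xi-\varepsilon,\xi+\varepsilon)$ and on the same side of $\xi$, handled via Lemma~\ref{lem3}(v); (b) a ``near-zero'' piece where one of the arguments belongs to $(\xi-\varepsilon,\xi+\varepsilon)$, bounded using $2\|\mu_s\|_\infty$ together with the occupation-time estimate $\EE[(\int_{t_{i-1}}^{t_i}1_{\{|X_t-\xi|<\varepsilon\}}\,dt)^2]\le c\varepsilon/n^2$ derived from Lemma~\ref{Xlem1}(ii)--(iii); and (c) a ``crossing'' piece where $X_t$ and $U_t$ lie on opposite sides of $\xi$, controlled in $L^2$ by the decisive estimate $\EE[(\int_{t_{i-1}}^{t_i}1_{\{(X_t-\xi)(U_t-\xi)\le 0\}}\,dt)^2]\le c(t_i-t_{i-1})^3\le c/n^3$ of Lemma~\ref{Xlem1}(iv). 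For $J_2$ the displacement $\tX^\pi_{t_{i-1}}-X_{t_{i-1}}$ on the good piece enters linearly and is weighted by $\EE[|Y_{t_{i-1}}-\tY^\pi_{t_{i-1}}|^2]$, producing the first summand of~\eqref{iter2a}, whereas on the near-zero and crossing pieces it is handled via H\"older's inequality combined with the $L^4$-bound on the displacement inherited from Theorem~\ref{thm1}. Optimising $\varepsilon$ as a negative power of $n$ then yields the remainder rate $n^{-(2+s+c_0)}$.

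The main obstacle will be the balance in the choice of $\varepsilon$: the Lipschitz-type bound of Lemma~\ref{lem3}(v) pushes $\varepsilon$ to be not too small, whereas the occupation-time estimate pushes it to be not too large, and with a naive split relying only on Lemma~\ref{Xlem1}(ii)--(iii) the two competing terms cannot both be made of the required order (one ends up with $n^{-5/2}$ for the near-zero contribution after optimising against the $\frac{1}{n}\EE[|Y_{t_{i-1}}-\tY^\pi_{t_{i-1}}|^2]$ requirement). The extra factor $(t_i-t_{i-1})$ hidden in Lemma~\ref{Xlem1}(iv), which lifts the squared $L^2$-crossing time from $n^{-2}$ to $n^{-3}$, together with the $L^4$-control of $\tX^\pi_{t_{i-1}}-X_{t_{i-1}}$ from Theorem~\ref{thm1}, is precisely what upgrades the naive rate to the sharp $n^{-(2+s+c_0)}$ and makes the argument close.
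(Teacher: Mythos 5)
Your decomposition differs from the paper's in a way that creates a genuine gap. You write (in your notation) $A_i - B_i = J_1 - J_2$ with $J_2 = \int_{t_{i-1}}^{t_i}(\mu_s(\tX^\pi_t)-\mu_s(\tilde U_t))\,dt$ where $\tilde U_t = X_{t_{i-1}}+\tW^\pi_t-\tW^\pi_{t_{i-1}}$ starts at $X_{t_{i-1}}$, not at $\tX^\pi_{t_{i-1}}$. Thus $J_2$ merges two distinct effects: the drift deviation (of size $O(1/n)$) and the starting-point deviation $\tX^\pi_{t_{i-1}}-X_{t_{i-1}}$ (random, controlled only in $L^4$). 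The paper instead writes $C_i = A_i + B_i + D_i + E_i$ where $D_i$ compares $\tX^\pi_t$ with $\tX^\pi_{t_{i-1}}+\tW^\pi_t-\tW^\pi_{t_{i-1}}$ (same starting point, pure drift deviation, so $\PP^{B_i}=\PP^{-D_i}$ and it falls under the $J_1$-type analysis), and $E_i$ isolates the starting-point deviation with \emph{identical} Brownian increments. This separation is not cosmetic: $E_i$ is then handled by a Fourier argument using $\mu_s=\F h_s$ and $|u|\,|h_s(u)|\le 1$, which produces the clean bound $\EE[E_i^2]\le \tfrac{c}{n}\EE[|X_{t_{i-1}}-\tX^\pi_{t_{i-1}}|^2]$ with no occupation-time estimates, no threshold $\xi$, and no loss of powers of $n$.

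Your proposal to push $J_2$ through the same occupation-time split used for $J_1$ runs into three concrete problems. First, Lemma~\ref{Xlem1}(iv) — your ``decisive estimate'' — compares $X^x_u$ with $X^x_s+W_u-W_s$, i.e.\ a solution and a Brownian motion started at the \emph{same} point; in $J_2$ the compared processes $\tX^\pi_t$ and $\tilde U_t$ start at different points, so this lemma does not apply to the crossing piece, and it is precisely this lemma that delivers the $(t_i-t_{i-1})^3\lesssim n^{-3}$ rate. Second, the near-zero piece of $J_2$ acquires a random prefactor $(|\tX^\pi_{t_{i-1}}-X_{t_{i-1}}|+1/n)^{s'-1/2}$; after Cauchy--Schwarz this requires a fourth-moment bound on the occupation time, which is not provided by Lemma~\ref{Xlem1}(ii)--(iii) and would have to be proved separately. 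Third, the ``good'' (Lipschitz-type) piece of $J_2$ contributes $\tfrac{c}{\xi^2 n^2}\EE[|\tX^\pi_{t_{i-1}}-X_{t_{i-1}}|^2]$, which can be absorbed into $\tfrac{c_1}{n}\EE[|Y_{t_{i-1}}-\tY^\pi_{t_{i-1}}|^2]$ only if $\xi\gtrsim n^{-1/2}$; this constrains the optimization that you need to carry out simultaneously for the near-zero and crossing pieces and, without the Fourier shortcut for the starting-point contribution, the numbers do not close uniformly in $s\in(1/2,1)$ without additional estimates. In short, the missing ingredient is the isolation of $E_i$ and its Fourier-based bound, which is what makes the paper's argument work without further occupation-time machinery.
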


\begin{proof}
	Let $n\in\N$ and  $\pi=\{t_1,\dots,t_{5n}\}\in\tPi^n$ with $0<t_1<\dots <t_{5n}=1$ and fix $i\in\{1,\dots,n\}$. Throughout this proof we use $c\in (0,\infty)$ to denote a positive constant that 
	neither depends on  $n$ nor on $\pi$ nor on $i$. 
	The value of $c$ may change from 
	line to line.

	Put
	\begin{align*}
		A_i & = \int_{t_{i-1}}^{t_i} \bigl (\mu_s (X_{t_{i-1} } +W_t - W_{t_{i-1} } ) -  \mu_s (X_{t_{i-1}} +\widetilde W^\pi_t - \widetilde W^\pi_{t_{i-1}})  \bigr)\, dt,\\
		B_i  & = \int_{t_{i-1}}^{t_i} \bigl (\mu_s (X_t ) -  \mu_s (X_{t_{i-1}} +W_t - W_{t_{i-1}})  \bigr)\, dt,\\
		C_i & = \int_{t_{i-1}}^{t_i} \bigl (\mu_s (X_t ) -  \mu_s (\tX^\pi_t)  \bigr)\, dt,\\
		D_i  & = \int_{t_{i-1}}^{t_i} \bigl (\mu_s (\tX^\pi_{t_{i-1}} + \tW^\pi_t - \tW^\pi_{t_{i-1}}) -\mu_s(\tX^\pi_t)  \bigr)\, dt,\\
		E_i& = \int_{t_{i-1}}^{t_i} \bigl (\mu_s (X_{t_{i-1} } + \tW^\pi_t - \tW^\pi_{t_{i-1} } ) -  \mu_s (\tX^\pi_{t_{i-1}} +\widetilde W^\pi_t - \widetilde W^\pi_{t_{i-1}})  \bigr)\, dt.
	\end{align*}
	Using~\eqref{qx1} we get
	\[
	(X_{t_i}- X_{t_{i-1}}) - (\tX^\pi_{t_i}- \tX^\pi_{t_{i-1}}) = C_i = A_i +B_i +D_i + E_i
	\]
	and therefore,
	\begin{equation}\label{q11}
		\bigl((X_{t_i}- X_{t_{i-1}}) - (\tX^\pi_{t_i}- \tX^\pi_{t_{i-1}}) \bigr)^2 \ge \frac{1}{2}A_i^2 - 3(B_i^2+ D_i^2+ E_i^2).
	\end{equation}
	Clearly, $\PP^{B_i} = \PP^{-D_i}$. We thus conclude from~\eqref{q11}  that
	\begin{equation}\label{q12}
		\begin{aligned}
			\EE\bigl[|(X_{t_i}- X_{t_{i-1}}) - (\tX_{t_i}- \tX_{t_{i-1}})|^2\bigr] & \ge   \frac{1}{2}\EE\bigl[A_i^2\bigr] - 3\bigl(\EE\bigl[B_i^2\bigr] + \EE[D_i^2]+\EE\bigl[E_i^2\bigr]\bigr) \\
			& =   \frac{1}{2} \EE\bigl[A_i^2\bigr]- 6 \EE\bigl[B_i^2\bigr] -3\EE\bigl[E_i^2\bigr].
		\end{aligned}
	\end{equation}
	
	We proceed with estimating the term $\EE\bigl[B_i^2\bigr]$. Let $\xi\in (0,\infty)$, let 
	\[
	s' = (s+ \max(1/2,3s/4))/2
	\]
	and note that $s'\in (1/2,s)$.	
	For every $t\in [t_{i-1},t_i]$ put                                                                                    
	\[
	R_t = X_{t_{i-1}} + W_t-W_{t_{i-1}} 
	\]
	and note that $X_t-R_t =  \int_{t_{i-1}}^t \mu_s(X_u)\, du$. Using 
	Lemma~\ref{lem3}(v),(vi)
	we obtain that there exists $\tilde c\in (0,\infty)$ such that for all $x,y\in\R$,
	\[
	|\mu_s(x)-\mu_s(y)| \le \begin{cases}
		\tilde c\, \frac{|x-y|}{\xi}, & \text{if } x,y \ge \xi \text{ or }x,y\le -\xi,\\[.1cm]
		\tilde c\,|x-y|^{s'-1/2}, & \text{if }  -\xi \le x,y\le \xi,\\[.1cm]
		2\|\mu_s\|_\infty, & \text{if } (x-\xi)(y-\xi) < 0 \text{ or } (x+\xi)(y+\xi) < 0.
	\end{cases}
	\]
	Using the latter fact and boundedness of $\mu_s$ we conclude that
	\begin{equation}\label{q13}
		\begin{aligned}
			\EE\bigl[B_i^2\bigr] & \le c\, \EE\biggl[\biggl|\int_{t_{i-1}}^{t_i} \Bigl( \frac{1}{\xi}\,|X_t-R_t| + |X_t-R_t|^{s'-1/2} 1_{[-\xi,\xi]^2} \bigl(X_t,R_t\bigr) \\
			& \qquad\qquad \qquad + 2\|\mu_s\|_\infty \bigl( 1_{\{(X_t-\xi)(R_t-\xi) <0\}} + 1_{\{(X_t+\xi)(R_t+\xi) <0\}} \bigr)\Bigr)\, dt \biggr|^2\biggr]\\
			&  \le c\, \biggl(\EE \biggl[\biggl(\int_{t_{i-1}}^{t_i}\frac{1}{\xi}\,\biggl| \int_{t_{i-1}}^t \mu_s(X_u)\, du\biggr|\, dt\biggr)^2 \biggr]  \\
			& \qquad \qquad + \EE \biggl[\biggl(\int_{t_{i-1}}^{t_i}\biggl| \int_{t_{i-1}}^t \mu_s(X_u)\, du\biggr|^{s'-1/2}\, 1_{[-\xi,\xi]}(X_t) \,dt\biggr)^2 \biggr] \\
			& \qquad \qquad\qquad +   \EE \biggl[\biggl(\int_{t_{i-1}}^{t_i}\bigl( 1_{\{(X_t-\xi)(R_t-\xi) <0\}} + 1_{\{(X_t+\xi)(R_t+\xi) <0\}} \bigr)\, dt \biggr)^2\biggr] \biggr)\\
			& \le c\biggl( \frac{1}{\xi^2 n^4} + \frac{1}{n^{2s'-1}}\, \EE\biggl[\biggl(\int_{t_{i-1}}^{t_i} 1_{[-\xi,\xi]}(X_t) \,dt\biggr)^2 \biggr]\\
			& \qquad \qquad  +   \EE \biggl[\biggl(\int_{t_{i-1}}^{t_i}\bigl( 1_{\{(X_t-\xi)(R_t-\xi) <0\}} + 1_{\{(X_t+\xi)(R_t+\xi) <0\}} \bigr)\, dt \biggr)^2\biggr] \biggr).
		\end{aligned}
	\end{equation}	
	By Lemma~\ref{Xlem1}(ii),
	\begin{equation}\label{q14}
		\EE\biggl[\biggl(\int_{t_{i-1}}^{t_i} 1_{[-\xi,\xi]}(X_t) \,dt\biggr)^2 \biggr] \le (t_i-t_{i-1}) \int_{t_{i-1}}^{t_i} \PP(|X_t| \le \xi)\, dt \le c\,\xi\, (t_i-t_{i-1})^2 \le c\frac{\xi}{n^2}.
	\end{equation}
	Moreover, by  Lemma~\ref{Xlem1}(iv),
	\begin{equation}\label{q15}
		\EE \biggl[\biggl(\int_{t_{i-1}}^{t_i}\bigl( 1_{\{(X_t-\xi)(R_t-\xi) <0\}} + 1_{\{(X_t+\xi)(R_t+\xi) <0\}} \bigr)\, dt \biggr)^2\biggr] \le c\,  (t_i-t_{i-1})^3 \le \frac{c}{n^3}.	
	\end{equation}
	Combining ~\eqref{q13} to~\eqref{q15} yields 
	\begin{equation}\label{q16a}
		\EE \bigl[B_i^2\bigr]  \le c \Bigl( \frac{1}{\xi^2 n^4} +  \frac{\xi}{ n^{1+2s'}} + \frac{1}{ n^3}\Bigr)
	\end{equation}
	and
	taking the minimizer 
	$\xi = 2^{1/3}n^{-1+2s'/3}$ 
	in~\eqref{q16a} we obtain
	\begin{equation}\label{q16aa}
		\EE \bigl[B_i^2\bigr]  \le c\, \Bigl( \frac{1}{n^{2+4s'/3}} + \frac{1}{ n^3}\Bigr).
	\end{equation}
	Note that 
	\[
	4s'/3 = \max\bigl((2s+1)/3,7s/6\bigr) = s +\max((1-s)/3, s/6)
	\]
	and therefore  we finally get from~\eqref{q16aa} that
	\begin{equation}\label{q16}
		\EE\bigl [B_i^2\bigr]  \le \frac{c}{ n^{2+s+c_0}},
	\end{equation}
	where $c_0= \min\bigl(\max((1-s)/3,s/6),1-s\bigr) \in (0,\infty)$.
	
	Next, we estimate the term $\EE\bigl[E_i^2\bigr]$. By~\eqref{qx2} we have for $\PP^{(X_{t_{i-1}}, \tX^\pi_{t_{i-1}})}$-almost all $(x,\tx) \in \R^2$,
	\begin{equation}\label{q17}
		\begin{aligned}
			&	\EE[E_i^2\,| \, (X_{t_{i-1}}, \tX^\pi_{t_{i-1}}) = ( x,\tx)]\\  & \qquad\qquad = \EE\Big[ \Bigl| \int_{t_{i-1}}^{t_i} \bigl(\mu_s(x+W_t-W_{t_{i-1}}) -\mu_s(\tx+W_t-W_{t_{i-1}}) \bigr)\, dt\Bigr|^2  \Bigr]\\
			&\qquad\qquad  =  \EE\Big[ \Bigl| \int_0^{t_{i}-t_{i-1}} \bigl(\mu_s(x+W_t) -\mu_s(\tx+W_t) \bigr)\, dt\Bigr|^2  \Bigr].
		\end{aligned}
	\end{equation}
	Note that for all $w\in\R$,
	\[
	\mu_s(x+w) -\mu_s(\tx+w) = \int_{\R} h_s(z)\, \bigl(e^{\bi z(x+w)} - e^{\bi z(\tx+w)}\bigr)\, dz.
	\]
	Hence, by~\eqref{q17},  for $\PP^{(X_{t_{i-1}}, \tX^\pi_{t_{i-1}})}$-almost all $(x,\tx) \in \R^2$,
	\begin{equation}\label{q18}
		\begin{aligned}
			&	\EE\bigl[E_i^2\,| \, (X_{t_{i-1}}, \tX^\pi_{t_{i-1}}) = ( x,\tx)\bigr]\\  & \quad = 2  
			\int_0^{t_i-t_{i-1}} \int_{t}^{t_i-t_{i-1}} \EE\Big[\bigl(\mu_s(x+W_t) -\mu_s(\tx+W_t) \bigr)\, \bigl(\mu_s(x+W_r) -\mu_s(\tx+W_r) \bigr)\Bigr]\, dr\, dt\\
			&\quad  = 2  \int_0^{t_i-t_{i-1}} \int_{t}^{t_i-t_{i-1}}\EE\Bigl[  \int_\R\int_\R h_s(u) h_s(v) \bigl(e^{\bi u(x+W_t)} - e^{\bi u(\tx+W_t)}\bigr) \\
			& \qquad\qquad\qquad\qquad\qquad\qquad\qquad\qquad\qquad\qquad \bigl(e^{\bi v(x+W_r)} - e^{\bi v(\tx+W_r)}\bigr)\, du\, dv \Bigr] \, dr \, dt\\
			&\quad  = 2 \int_0^{t_i-t_{i-1}} \int_{t}^{t_i-t_{i-1}} \int_\R\int_\R h_s(u) h_s(v) \bigl(e^{\bi ux} - e^{\bi u\tx}\bigr) \bigl(e^{\bi vx} - e^{\bi v\tx}\bigr) \\
			& \qquad\qquad\qquad\qquad\qquad\qquad\qquad\qquad\qquad\qquad \EE\bigl [ e^{\bi (uW_t + vW_r)}\bigr] \, du\, dv\, dr \, dt\\
			&\quad  =2 \int_0^{t_i-t_{i-1}} \int_{t}^{t_i-t_{i-1}} \int_\R\int_\R h_s(u) h_s(v) \bigl(e^{\bi ux} - e^{\bi u\tx}\bigr) \bigl(e^{\bi vx} - e^{\bi v\tx}\bigr)\\
			& \qquad\qquad\qquad\qquad\qquad\qquad\qquad\qquad\qquad\qquad e^{-t(u+v)^2/2} \, e^{-(r-t) v^2/2} \, du\, dv\, dr \, dt.
		\end{aligned}
	\end{equation}
	
	Note that  $|e^{\bi ux} - e^{\bi u\tx}| \le |u||x-\tx|$ and $|u||h_s(u)| \le |u|/(e+|u|) \le 1$ for all $u\in\R$. By the latter two
	estimates and~\eqref{qx2} we derive from~\eqref{q18} that for $\PP^{(X_{t_{i-1}}, \tX^\pi_{t_{i-1}})}$-almost all $(x,\tx) \in \R^2$,
	\begin{equation}\label{q19}
		\begin{aligned}
			&	\EE\bigl[E_i^2\,| \, (X_{t_{i-1}}, \tX^\pi_{t_{i-1}})= ( x,\tx)\bigr] \\
			&\qquad\qquad\ \le  2 |x-\tx|^2\int_0^{t_i-t_{i-1}} \int_{t}^{t_i-t_{i-1}} \int_\R\int_\R e^{-t(u+v)^2/2} \, e^{-(r-t) v^2/2} \, du\, dv\, dr \, dt\\
			& \qquad\qquad = 4\pi |x-\tx|^2\int_0^{t_i-t_{i-1}} \int_{t}^{t_i-t_{i-1}}  \frac{1}{\sqrt{t}} \frac{1}{\sqrt{r-t}}\, dr\, dt\\
			& \qquad\qquad\ \le 4\pi |x-\tx|^2 \bigl(2\sqrt{t_i-t_{i-1}}\bigr)^2 \le 4\pi |x-\tx|^2\cdot \frac{1}{n}.
		\end{aligned}
	\end{equation}
	Taking expectations in~\eqref{q19} and using Lemma~\ref{lem1}(ii) we obtain 
	\begin{equation}\label{q21}
		\EE\bigl[E_i^2\bigr]\le \frac{c}{n}\,\EE\bigl[|X_{t_{i-1}} - \tX^\pi_{t_{i-1}}|^2\bigr] \le \frac{c}{n}\, \EE\bigl[|Y_{t_{i-1}} - \widetilde Y^\pi_{t_{i-1}}|^2\bigr].
	\end{equation}
	Finally, combine~\eqref{q12} with~\eqref{q16} and~\eqref{q21} to obtain~\eqref{iter2a}.
\end{proof}

Combining Lemma~\ref{lemf3} with Lemma~\ref{lemf4}  we obtain the following 
lower bound for the quantity  $\EE\bigl[|Y_{1} - \widetilde Y^\pi_{1}|^2\bigr]$.

\begin{lemma}\label{lemf3b}
	Let $s\in (1/2,1)$ and let $\mu = \mu_s$. Then there exist $c_0,c_1\in (0,\infty)$ and $n^*\in\N$ such that for  all $n\in\N$ with $n\ge n^*$ and all $\pi=\{t_1,\dots,t_{5n}\}\in\tPi^n$ with $0<t_1<\dots <t_{5n}=1$,
	\begin{equation}\label{iterx}
		\begin{aligned}
			&  \EE\bigl[|Y_{1} - \widetilde Y^\pi_{1}|^2\bigr] \\
			&\qquad  \ge c_1 \sum_{i=i^*(\pi)}^{5n}  \EE\Bigl[ \Bigl|\int_{t_{i-1}}^{t_i} \bigl (\mu_s (X_{t_{i-1} } +W_t - W_{t_{i-1} } ) -  \mu_s (X_{t_{i-1}} +
			\widetilde W^\pi_t - \widetilde W^\pi_{t_{i-1}})  \bigr)\, dt\Bigr|^2\Bigr]\\
			& \qquad\qquad - \frac{1}{n^{1+s+c_0}},
		\end{aligned} 
	\end{equation}
	where $i^*(\pi) = \min\{i\in \{1,\dots,5n\}\colon t_{i-1} \ge 1/2\}$.
\end{lemma}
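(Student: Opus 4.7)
Write $\Delta_i^2 = \EE[|Y_{t_i} - \widetilde Y^\pi_{t_i}|^2]$ for $i\in\{0,\ldots,5n\}$ and
\[
a_i = \EE\Bigl[\Bigl|\int_{t_{i-1}}^{t_i} \bigl(\mu_s(X_{t_{i-1}}+W_t-W_{t_{i-1}}) - \mu_s(X_{t_{i-1}}+\widetilde W^\pi_t-\widetilde W^\pi_{t_{i-1}})\bigr)\,dt\Bigr|^2\Bigr]
\]
for $i\in\{1,\ldots,5n\}$. The plan is to fuse Lemmas~\ref{lemf3} and~\ref{lemf4} into a single one-step recursion for $\Delta_i^2$ and then iterate from $\Delta_0^2 = 0$ (which holds because $Y_0 = G_\mu(x_0) = \widetilde Y^\pi_0$).

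First I fix some small target $c_0\in(0,\infty)$ and a strictly larger $c_0'\in(0,s-1/2)$ to which Lemma~\ref{lemf3} (and, if necessary after shrinking, Lemma~\ref{lemf4}) will be applied. Lemma~\ref{lemf3} then supplies constants so that for every $i\in\{1,\ldots,5n\}$,
\[
\Delta_i^2 \ge \left(1 - \frac{C_1}{n}\right)\Delta_{i-1}^2 + C_2\,\EE\bigl[|(X_{t_i}-X_{t_{i-1}}) - (\widetilde X^\pi_{t_i}-\widetilde X^\pi_{t_{i-1}})|^2\bigr] - \frac{C_1}{n^{2+s+c_0'}},
\]
while for $i\ge i^*(\pi)$, Lemma~\ref{lemf4} lower-bounds the middle expectation by $\tfrac12 a_i - (C_3/n)\Delta_{i-1}^2 - C_3/n^{2+s+c_0'}$. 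Inserting this and enlarging constants as needed yields, for $i\ge i^*(\pi)$,
\[
\Delta_i^2 \ge \left(1 - \frac{C}{n}\right)\Delta_{i-1}^2 + \frac{C_2}{2}\,a_i - \frac{C}{n^{2+s+c_0'}},
\]
and, for $i < i^*(\pi)$, the same inequality with the $a_i$-term dropped (by nonnegativity).

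Next I iterate. Setting $q_n = 1 - C/n$ and unfolding the recursion from $\Delta_0^2 = 0$ gives
\[
\Delta_{5n}^2 \ge \frac{C_2}{2}\sum_{i=i^*(\pi)}^{5n} q_n^{5n-i}\,a_i - \frac{C}{n^{2+s+c_0'}}\sum_{i=1}^{5n} q_n^{5n-i}.
\]
Since $q_n^{5n}\to e^{-5C}$ as $n\to\infty$, there exists $n^*\in\N$ such that for all $n\ge n^*$ and all $0\le i\le 5n$ we have $q_n^{5n-i} \ge q_n^{5n} \ge \tfrac12 e^{-5C}$, and in particular the second sum is bounded above by $5n$. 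Consequently
\[
\Delta_{5n}^2 \ge \frac{C_2\,e^{-5C}}{4}\sum_{i=i^*(\pi)}^{5n} a_i - \frac{5C}{n^{1+s+c_0'}}.
\]
Since $c_0 < c_0'$, for $n\ge n^*$ (enlarging $n^*$ if necessary) the error term is dominated by $1/n^{1+s+c_0}$, yielding~\eqref{iterx} with $c_1 = C_2 e^{-5C}/4$.

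The main subtlety is bookkeeping: the multiplicative loss $(1-C/n)$ accumulates over $5n$ steps and the additive $O(n^{-2-s-c_0'})$ errors also accumulate, so one must check that both are absorbed — the first by the uniform lower bound $q_n^{5n}\gtrsim 1$, the second by sacrificing a tiny bit of the exponent (passing from $c_0'$ to $c_0$). Everything else is a direct substitution from the two preceding lemmas.
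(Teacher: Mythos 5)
Your proposal is correct and follows essentially the same route as the paper: combine Lemmas~\ref{lemf3} and~\ref{lemf4} into a single one-step recursion for $\Delta_i^2$, iterate it $5n$ times controlling the multiplicative loss via $(1-C/n)^{5n}\gtrsim e^{-5C}$, and absorb the accumulated additive errors. The only cosmetic differences are that the paper sums the geometric series sharply ($\sum_j (1-\bar c_2/n)^j\le n/\bar c_2$) so the exponent $\bar c_0$ survives intact, whereas you bound the sum crudely by $5n$ and compensate by passing from $c_0'$ to a strictly smaller $c_0$; and the paper stops the iteration at $i=i^*(\pi)$, dropping $\Delta_{i^*(\pi)-1}^2\ge 0$, while you iterate all the way to $\Delta_0^2=0$ — both are valid.
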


\begin{proof}
	Let $s\in (1/2,1)$,  let $n\in\N$  and let $\pi=\{t_1,\dots,t_{5n}\}\in\tPi^n$ with $0<t_1<\dots <t_{5n}=1$. 
	
	By Lemma~\ref{lem3} we have $\mu_s\in \Lc^1(\R)\cap\Wc^{(s-1/4)-,4}$. We may thus apply Lemma~\ref{lemf3} with $c_0 = (s-1/2)/2$  to obtain that there exist $c'_1,c'_2\in (0,\infty)$, 
	which neither depend on  $n$ nor on $\pi$,
	such that for all  $i\in \{1,\dots,5n\}$,
	\begin{equation}\label{q22}
		\begin{aligned}
			\EE\bigl[|Y_{t_i} - \widetilde Y^\pi_{t_i}|^2\bigr] & \ge \Bigl(1-\frac{c'_1}{n}\Bigr)\, \EE\bigl[|Y_{t_{i-1}} - \widetilde Y^\pi_{t_{i-1}}|^2\bigr]  \\
			& \qquad\qquad + c'_2\, \EE\bigl[|(X_{t_i}- X_{t_{i-1}}) - (\tX^\pi_{t_i}- \tX^\pi_{t_{i-1}})|^2\bigr] - \frac{c'_1}{n^{2+s+(s-1/2)/2}}.
		\end{aligned}
	\end{equation}	
	Using Lemma~\ref{lemf4} we obtain the existence of $\tilde c_0,\tilde c_1\in (0,\infty)$, 
	which neither depend on  $n$ nor on $\pi$,
	such that for all $ i\in \{i^*(\pi),\dots,5n\}$,
	\begin{equation}\label{q23}
		\begin{aligned}
			& 	 \EE\bigl[|(X_{t_i}- X_{t_{i-1}}) - (\tX^\pi_{t_i}- \tX^\pi_{t_{i-1}})|^2\bigr] \\
			&\qquad\qquad  \ge \frac{1}{2}\,\EE\Bigl[ \Bigl|\int_{t_{i-1}}^{t_i} \bigl (\mu_s (X_{t_{i-1} } +W_t - W_{t_{i-1} } ) -  \mu_s (X_{t_{i-1}} +\widetilde W^\pi_t - \widetilde W^\pi_{t_{i-1}})  \bigr)\, dt\Bigr|^2\Bigr]\\ 
			& \qquad \qquad\qquad\qquad -\frac{\tilde c_1}{n}\EE\bigl[|Y_{t_{i-1}} - \widetilde Y^\pi_{t_{i-1}}|^2\bigr]    -\frac{\tilde c_1}{ n^{2+s+\tilde c_0}}.
		\end{aligned}
	\end{equation}			
	Combining~\eqref{q22} and~\eqref{q23} yields that for all $ i\in \{i^*(\pi),\dots,5n\}$,
	\begin{equation}\label{q23a}
		\begin{aligned}
			\EE\bigl[|Y_{t_i} - \widetilde Y^\pi_{t_i}|^2\bigr] & \ge \bar c_1\, \EE\Bigl[ \Bigl|\int_{t_{i-1}}^{t_i} \bigl (\mu_s (X_{t_{i-1} } +W_t - W_{t_{i-1} } ) -  \mu_s (X_{t_{i-1}} +\widetilde W^\pi_t - \widetilde W^\pi_{t_{i-1}})  \bigr)\, dt\Bigr|^2\Bigr] \\
			& \qquad\qquad 	+	\Bigl(1-\frac{\bar c_2}{n}\Bigr)\, \EE\bigl[|Y_{t_{i-1}} - \widetilde Y^\pi_{t_{i-1}}|^2\bigr]  - \frac{\bar c_2}{n^{2+s+ \bar c_0}},
		\end{aligned}
	\end{equation}		
	where  $\bar c_0=\min(\tilde c_0, (s-1/2)/2)$, $\bar c_1=c'_2/2$ and $\bar c_2= \tilde c_1 c_2' + c'_1$.
	
	Take $n^*\in \N$ such that for all $n\in \N$ with $n\ge n^*$,
	\begin{equation}\label{bedn}
		n > \bar c_2\quad \text{ and }\quad \Bigl(1-\frac{\bar c_2}{n}\Bigr)^n \ge e^{-\bar c_2}/2. 
	\end{equation}
	Assume $n\ge n^*$. Then, for all $i\in\{0,\dots,5n\}$,
	\[
	\Bigl(1-\frac{\bar c_2}{n}\Bigr)^{i} \ge \Bigl(1-\frac{\bar c_2}{n}\Bigr)^{5n} \ge (e^{-\bar c_2}/2)^5,
	\]
	and, furthermore,
	\[
	\sum_{i=0}^{5n} \Bigl(1-\frac{\bar c_2}{n}\Bigr)^{i} \le \sum_{i=0}^{\infty}  \Bigl(1-\frac{\bar c_2}{n}\Bigr)^{i} = \frac{n}{\bar c_2}.
	\]
	Iteratively applying~\eqref{q23a} for $i=5n,5n-1,\dots,i^*(\pi)$ we thus obtain
	\begin{equation}\label{q23b}
		\begin{aligned}
			\EE\bigl[|Y_{1} - \widetilde Y^\pi_{1}|^2\bigr] &  \ge \bar c_1 \sum_{i=i^*(\pi)}^{5n}  \Bigl(1-\frac{\bar c_2}{n}\Bigr)^{5n-i}\, \EE\Bigl[ \Bigl|\int_{t_{i-1}}^{t_i} \bigl (\mu_s (X_{t_{i-1} } +W_t - W_{t_{i-1} } ) \\
			&\qquad\qquad\qquad\qquad \qquad\qquad\qquad \qquad\qquad-  \mu_s (X_{t_{i-1}} +\widetilde W^\pi_t - \widetilde W^\pi_{t_{i-1}})  \bigr)\, dt\Bigr|^2\Bigr] \\
			& \qquad\qquad +	\Bigl(1-\frac{\bar c_2}{n}\Bigr)^{5n-i^*(\pi)+1}\, \EE\bigl[|Y_{t_{i^*(\pi)-1}} - \widetilde Y^\pi_{t_{i^*(\pi)-1}}|^2\bigr] \\
			&\qquad\qquad\qquad	- \frac{\bar c_2}{n^{2+s+\bar c_0}}\, \sum_{i=i^*(\pi)}^{5n}  \Bigl(1-\frac{\bar c_2}{n}\Bigr)^{5n-i}\\
			&  \ge \bar c_1 (e^{-\bar c_2}/2)^5  \sum_{i=i^*(\pi)}^{5n} \EE\Bigl[ \Bigl|\int_{t_{i-1}}^{t_i} \bigl (\mu_s (X_{t_{i-1} } +W_t - W_{t_{i-1} } ) \\
			&\qquad\qquad\qquad\qquad \qquad\qquad-  \mu_s (X_{t_{i-1}} +\widetilde W^\pi_t - \widetilde W^\pi_{t_{i-1}})  \bigr)\, dt\Bigr|^2\Bigr] 	- \frac{1}{n^{1+s+\bar c_0}}.
		\end{aligned}
	\end{equation}		
\end{proof}

Next, we provide a lower bound for the term  $\EE\bigl[|\int_{t_{i-1}}^{t_i} (\mu_s (X_{t_{i-1} } +W_t - W_{t_{i-1} } ) -  \mu_s (X_{t_{i-1}} +\widetilde W^\pi_t - \widetilde W^\pi_{t_{i-1}})  )\, dt|^2\bigr]$ in~\eqref{iterx}.

\begin{lemma}\label{lemf5}
	Let $s\in (1/2,1)$ and let $\mu = \mu_s$. Then there exist $n^\ast\in\N$ and $c\in (0,\infty)$ such that for  all $n\in\N$ with $n\ge n^\ast$, all $\pi=\{t_1,\dots,t_{5n}\}\in\tPi^n$ with $0<t_1<\dots <t_{5n}=1$ and all $ i\in \{1,\dots,5n\}$ with $t_{i-1} \ge 1/2$ and $t_i-t_{i-1} = 1/(4n)$,
	\begin{equation}\label{iter2b}
		\EE\Bigl[\Bigl| \int_{t_{i-1}}^{t_i} \bigl (\mu_s (X_{t_{i-1} } +W_t - W_{t_{i-1} } ) -  \mu_s (X_{t_{i-1}} +\widetilde W^\pi_t - \widetilde W^\pi_{t_{i-1}})  \bigr)\, dt \Bigr|^2\Big] \ge \frac{ c}{\ln^2(n+1)n^{2+s}}.
	\end{equation}
\end{lemma}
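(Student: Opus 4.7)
The plan is to implement the four-step scheme sketched at the end of Section~\ref{S1}. Throughout, fix $n\in\N$, $\pi=\{t_1,\dots,t_{5n}\}\in\tPi^n$, and $i\in\{1,\dots,5n\}$ satisfying the hypothesis, set $T = t_i - t_{i-1} = 1/(4n)$, and denote the integral whose second moment appears on the left-hand side of~\eqref{iter2b} by $A_i$. All constants below are independent of $n,\pi,i$.

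The first step is a reduction to a problem on $[0,1]$. By~\eqref{qx2} the pair $(W_t-W_{t_{i-1}},\widetilde W^\pi_t - \widetilde W^\pi_{t_{i-1}})_{t\in[t_{i-1},t_i]}$ is independent of $X_{t_{i-1}}$, and since $t_{i-1}\ge 1/2$ the density of $X_{t_{i-1}}$ admits a Gaussian lower bound uniform in $t_{i-1}$ by Lemma~\ref{Xlem1}(i). Because $t_{i-1}$ and $t_i$ are necessarily consecutive points of the sub-lattice $\{j/(4n)\}_{j=0}^{4n}$, the rescaled pair
\[
(B_s,\widetilde B_s) := T^{-1/2}\bigl(W_{t_{i-1}+sT}-W_{t_{i-1}},\, \widetilde W^\pi_{t_{i-1}+sT}-\widetilde W^\pi_{t_{i-1}}\bigr),\quad s\in[0,1],
\]
has the same joint distribution as the pair $(W,\widetilde W^{\{1\}})$ introduced before Lemma~\ref{lemf1}: two standard Brownian motions on $[0,1]$ agreeing at time $1$ and conditionally independent given that value. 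Combining these facts, substituting, and performing the outer change of variable $y\mapsto y/\sqrt{4n}$ gives
\[
\EE[A_i^2]\ge \frac{c_1}{n^{5/2}}\int_\R e^{-c_2 y^2/n}\EE\bigl[|I_n(y)|^2\bigr]\,dy,\quad I_n(y)=\int_0^1\!\bigl[\mu_s((y+B_s)/\sqrt{4n}) - \mu_s((y+\widetilde B_s)/\sqrt{4n})\bigr]\,ds.
\]

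The second step extracts the logarithm and power of $n$ via the Fourier representation $\mu_s(z)=2\int_0^\infty\cos(zu)h_s(u)\,du$. Substituting $u\mapsto u\sqrt{4n}$ turns each inner $\mu_s$-evaluation into $2\int_0^\infty\cos((y+B_s)u)k_n(u)\,du$ with $k_n(u)=\sqrt{4n}\,h_s(u\sqrt{4n})$, and an elementary calculation using the explicit form of $h_s$ yields, for every $u>0$,
\[
\ln(4n)(4n)^{(s-1/2)/2}\,k_n(u)\longrightarrow \frac{2}{u^{s+1/2}}\quad\text{as }n\to\infty,
\]
with uniform convergence on every compact $[\delta,M]\subset(0,\infty)$ and a uniform dominating bound $\ln(4n)(4n)^{(s-1/2)/2}k_n(u)\le C/u^{s+1/2}$ for $u\ge\delta$. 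The small-$u$ contribution to the \emph{difference} $\mu_s((y+B_s)/\sqrt{4n})-\mu_s((y+\widetilde B_s)/\sqrt{4n})$ is bounded by $|B_s-\widetilde B_s|\int_0^\delta u\,k_n(u)\,du$, and the computation $\ln(4n)(4n)^{(s-1/2)/2}\int_0^\delta u\,k_n(u)\,du\to 2\delta^{3/2-s}/(3/2-s)$ shows that it can be made arbitrarily small by shrinking $\delta$ (since $s\in(1/2,1)\subset(1/2,3/2)$). On each event $\{\sup_{s\in[0,1]}(|B_s|+|\widetilde B_s|)\le R\}$ these facts combine to give pointwise convergence of the rescaled difference to $C_s\bigl[|y+B_s|^{s-1/2}-|y+\widetilde B_s|^{s-1/2}\bigr]$ (the finite-difference form of the Hadamard-regularized Fourier transform of $2/u^{s+1/2}$), together with a uniform integrable dominant.

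Applying Fatou's lemma twice---once inside the expectation over $(B,\widetilde B)$ and once in the outer $dy$-integral (using $e^{-c_2 y^2/n}\to 1$ pointwise)---then yields
\[
\liminf_n \ln^2(4n)(4n)^{s-1/2}\int_\R e^{-c_2 y^2/n}\EE[|I_n(y)|^2]\,dy \ge C_s^2\int_\R\EE\Bigl[\Bigl|\!\int_0^1\!\bigl(|y+B_s|^{s-1/2}-|y+\widetilde B_s|^{s-1/2}\bigr)ds\Bigr|^2\Bigr]dy,
\]
and the latter is strictly positive because $B$ and $\widetilde B$ are almost surely distinct paths (conditionally independent given $B_1=\widetilde B_1$), so the inner integrand is nonzero on a set of positive measure in both $y$ and $\omega$. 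Combining with the first step produces $\EE[A_i^2]\ge c/(n^{2+s}\ln^2 n)$ for all $n$ sufficiently large; finitely many exceptional values of $n$ are absorbed into the constant. The main obstacle lies in the second step: since the pointwise limit $2/u^{s+1/2}$ of the rescaled kernel is not integrable at $0$ (as $s>1/2$), the near-zero portion of the $u$-integral must be split off and its \emph{differenced} contribution shown to be a genuinely subleading $O(\delta^{3/2-s})$ term, while the large-$u$ tail needs a uniform integrable dominant to license passage to the limit---it is precisely this cancellation, rather than any smallness of $k_n$ itself near zero, that makes the scaling work out.
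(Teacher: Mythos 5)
Your approach is correct in outline but genuinely different from the paper's proof of this lemma, and it is worth comparing the two. Both begin the same way: the Gaussian lower bound on the density of $X_{t_{i-1}}$ from Lemma~\ref{Xlem1}(i), the conditional independence~\eqref{qx2}, the Brownian bridge scaling identity, and the change of variable $x\mapsto y/\sqrt{4n}$ reduce the problem to bounding $\frac{1}{n^{5/2}}\int_\R e^{-c\,y^2/n}\,\EE[|I_n(y)|^2]\,dy$ from below with $I_n$ as in your display. From there the routes diverge. The paper writes the inner quantity as $\int_\R e^{-c_4 x^2}\bigl|\int_\R h_s(z)e^{\bi zx}\kappa(z/\sqrt{4n})\,dz\bigr|^2\,dx$ and then applies Plancherel, the convolution theorem and Fourier inversion (eq.~\eqref{rsv7a}) to move the Gaussian weight onto the other side of the Fourier transform. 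This repackaging puts the limiting kernel $|x|^{-(1/2+s)}$ in the \emph{outer} variable, which is then safely restricted to a compact set bounded away from the origin, and the log-and-power scaling appears automatically from the $\sqrt{4n}$ rescaling. Positivity is then proved in closed form by computing $\EE[|\kappa(1)|^2]=4\int_0^1\int_s^1 e^{-(t-s)/2}(1-e^{-s(1-t)})\,dt\,ds>0$, plus a continuity estimate on $\kappa$ to spread this positivity to a neighbourhood. You instead work directly with the Fourier representation of $\mu_s(\cdot/\sqrt{4n})$, substitute $z\mapsto u\sqrt{4n}$, and establish the pointwise scaling limit $\ln(4n)(4n)^{(s-1/2)/2}\,\sqrt{4n}\,h_s(u\sqrt{4n})\to 2u^{-(s+1/2)}$. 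Since this limit is not integrable at $0$, you split the $u$-integral at $\delta$ and exploit the cancellation $|\cos(au)-\cos(bu)|\le|a-b|u$ to show the $[0,\delta]$ contribution is $O(\delta^{3/2-s})$ uniformly in $n$; the $[\delta,\infty)$ part is handled by dominated convergence, and two applications of Fatou finish. This is a more hands-on, asymptotic argument and it does avoid any Plancherel machinery; your computations for the rescaled kernel and the small-$u$ bound check out.

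Two places where your write-up leaves work to be done. First, the positivity of $\int_\R\EE\bigl[\bigl|\int_0^1(|y+B_s|^{s-1/2}-|y+\widetilde B_s|^{s-1/2})\,ds\bigr|^2\bigr]\,dy$ is asserted from ``$B$ and $\widetilde B$ are almost surely distinct paths.'' Distinctness of paths does not by itself preclude cancellation in the time integral, so this needs an actual argument, for instance: conditioning on $B_1=\widetilde B_1=w$ makes $\int_0^1|y+B_s|^{s-1/2}\,ds$ and $\int_0^1|y+\widetilde B_s|^{s-1/2}\,ds$ i.i.d.\ non-degenerate random variables, so their difference cannot vanish a.s.; this is the physical-space analogue of the paper's closed-form computation of $\EE[|\kappa(1)|^2]$ and should be written out. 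Second, the uniform dominant $\ln(4n)(4n)^{(s-1/2)/2}\,k_n(u)\le C_\delta u^{-(s+1/2)}$ on $[\delta,\infty)$ holds only for $n$ large (one needs $\ln(e+\delta\sqrt{4n})\ge\tfrac12\ln(\sqrt{4n})$, say), which is harmless but should be stated since it fixes the $n^\ast$ in the lemma. Neither point undermines the strategy; they are gaps of rigor, not of idea.
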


\begin{proof}
	Let $n\in\N$ and  $\pi=\{t_1,\dots,t_{5n}\}\in\tPi^n$ with $0<t_1<\dots <t_{5n}=1$ and fix $i\in\{1,\dots,n\}$ with $t_{i-1} \ge 1/2$ and $t_i-t_{i-1} = 1/(4n)$.  Throughout this proof we use $c_1,c_2,\dots\in (0,\infty)$ to denote  positive constants,
	which neither depend on  $n$ nor on $\pi$ nor on $i$.	
	
	Since $X_{t_{i-1}}$ is measurable with respect to $\Fc^{W,\tW^\pi}_{t_{i-1}}$ we obtain by~\eqref{qx2} that
	\begin{equation}\label{rsv1}
		\begin{aligned}
			&	 \EE\Bigl[\Bigl| \int_{t_{i-1}}^{t_i} \bigl (\mu_s (X_{t_{i-1} } +W_t - W_{t_{i-1} } ) -  \mu_s (X_{t_{i-1}} +\widetilde W^\pi_t - \widetilde W^\pi_{t_{i-1}})  \bigr)\, dt \Bigr|^2\Big] \\
			& \qquad\qquad = \int_{\R}  \EE\Bigl[\Bigl| \int_{t_{i-1}}^{t_i} \bigl (\mu_s (x +W_t - W_{t_{i-1} } ) -  \mu_s (x +\widetilde W^\pi_t - \widetilde W^\pi_{t_{i-1}})  \bigr)\, dt \Bigr|^2\Big] \PP^{X_{t_{i-1}}} (dx).
		\end{aligned}
	\end{equation}		
	By Lemma~\ref{Xlem1}(i), the distribution of $X_{t_{i-1}}$ has a Lebesgue density $f_{X_{t_{i-1}}}$ that satisfies 
	\[
	f_{X_{t_{i-1}}}(x) \ge c_1\,\frac{1}{\sqrt{t_{i-1}}}\, e^{-\frac{c_2(x-x_0)^2}{t_{i-1}}}
	\]
	for all $x\in \R$. Since $t_{i-1}\in[1/2,1]$ and $e^{-2c_2(x-x_0)^2} \ge e^{-4c_2x_0^2} e^{-4c_2x^2}$ for all $x\in \R$ we thus obtain 
	\begin{equation}\label{rsv4}
		\begin{aligned}
			& \int_{\R}  \EE\Bigl[\Bigl| \int_{t_{i-1}}^{t_i} \bigl (\mu_s (x +W_t - W_{t_{i-1} } ) -  \mu_s (x +\widetilde W^\pi_t - \widetilde W^\pi_{t_{i-1}})  \bigr)\, dt \Bigr|^2\Big] \PP^{X_{t_{i-1}}} (dx)\\
			& \qquad \ge c_3\,  \int_{\R} e^{-c_4\,x^2}\EE\Bigl[\Bigl| \int_{t_{i-1}}^{t_i} \bigl (\mu_s (x +W_t - W_{t_{i-1} } ) -  \mu_s (x +\widetilde W^\pi_t - \widetilde W^\pi_{t_{i-1}})  \bigr)\, dt  \Bigr|^2\Big] \, dx.
		\end{aligned}
	\end{equation}	
	Note that for all $t\in [t_{i-1}, t_i]$ we have $W_t-W_{t_{i-1}} = \frac{t-t_{i-1}}{1/(4n)} (W_{t_i}- W_{t_{i-1}}) + B^\pi_t$ and, observing~\eqref{qx1},   $\tW^\pi_t-\tW^\pi_{t_{i-1}} =  \frac{t-t_{i-1}}{1/(4n)} (W_{t_i}- W_{t_{i-1}}) + \tB^\pi_t$. Hence, for all $x\in\R$,
	\begin{equation}\label{rsv2}
		\begin{aligned}
			& \int_{t_{i-1}}^{t_i} \bigl (\mu_s (x +W_t - W_{t_{i-1} } ) -  \mu_s (x +\widetilde W^\pi_t - \widetilde W^\pi_{t_{i-1}})  \bigr)\, dt \\
			& \qquad\qquad =\frac{1}{4n}  \int_0^1 \Bigl( \mu_s\bigl(x+ t(W_{t_i} - W_{t_{i-1}})   + B^\pi_{t_{i-1} + t/(4n)}  ) \\
			& \qquad\qquad\qquad\qquad\qquad\qquad - \mu_s\bigl(x+ t(W_{t_i} - W_{t_{i-1}})   + \tB^\pi_{t_{i-1} + t/(4n)} \bigr) \Bigr)\, dt.
		\end{aligned}
	\end{equation}	
	
	Let $\pistar = \{1\}$.
	By the independence of $(W_{t_i} - W_{t_{i-1}})$, 
	$(B^\pi_{t_{i-1} + t/(4n)})_{t\in [0,1]}$,  $(\tB^\pi_{t_{i-1} + t/(4n)})_{t\in [0,1]}$ 
	and the independence of $W_1$, $(B_t^{\pi^\ast})_{t\in [0,1]}$, $(\tB_t^{\pi^\ast})_{t\in [0,1]}$ we have
	\begin{align*}
		\PP^{ (t(W_{t_i} - W_{t_{i-1}}) +B^\pi_{t_{i-1} + t/(4n)}, t(W_{t_i} - W_{t_{i-1}})+ \tB^\pi_{t_{i-1} + t/(4n)})_{t\in [0,1]}  } & = \PP^{(\sqrt{1/(4n)} \, (tW_1+ B_t^{\pi^\ast}, tW_1+ \tB_t^{\pi^\ast} ))_{t\in [0,1]}} \\ & = \PP^{(\sqrt{1/(4n)} \, (W_t, \tW_t^{\pi^\ast} ))_{t\in [0,1]}}.
	\end{align*}
	Using~\eqref{rsv2} we may thus conclude that for all $x\in \R$,
	\begin{equation}\label{rsv3}
		\begin{aligned}
			&  \EE\Bigl[\Bigl| \int_{t_{i-1}}^{t_i} \bigl (\mu_s (x +W_t - W_{t_{i-1} } ) -  \mu_s (x +\widetilde W^\pi_t - \widetilde W^\pi_{t_{i-1}})  \bigr)\, dt \Bigr|^2\Big] \\
			& \qquad\qquad = \frac{1}{16n^2} \EE\Bigl[\Bigl| \int_0^1 \bigl (\mu_s (x + \sqrt{1/(4n)} \,W_t ) -  \mu_s (x +\sqrt{1/(4n)}\, \widetilde W^{\pi^\ast}_t )\bigr)\, dt \Bigr|^2\Big].
		\end{aligned}
	\end{equation}
	
	For  $z\in \R$ put
	\[
	\kappa(z) = \int_0^1 \bigl( e^{\bi zW_t} - e^{\bi z \tW^{\pi^\ast}_t}\bigr)\, dt
	\]
	and for later purposes note that for all $y,z\in \R$ we have
	\begin{equation}\label{rsv5a}
		|\kappa(z)| \le 2
	\end{equation}
	and 
	\begin{equation}\label{rsv5}
		\begin{aligned}
			|\kappa(z)-\kappa(y)| & = \Bigl|\int_0^1 \bigl( e^{\bi z W_t} - e^{\bi y W_t}\bigr)\, dt -                       \int_0^1 \bigl( e^{\bi z \tW^{\pi^*}_t} - e^{\bi y \tW^{\pi^*}_t}\bigr)\, dt\Bigr| \\
			& \le |z-y|\int_0^1 (|W_t| + |\tW^{\pi^*}_t|)\, dt.
		\end{aligned}
	\end{equation}
	Recall that $\mu_s = \F h_s$ a.e.
		 and $h_s\in \Lc^1(\R)$, see Section~\ref{mumu}. Pathwise applying Fubini's theorem we thus obtain for every $x\in \R$,
	\begin{equation}\label{rsv6}
		\begin{aligned}
			& \int_0^1 \bigl (\mu_s (x + \sqrt{1/(4n)} \,W_t ) -  \mu_s (x +\sqrt{1/(4n)}\, \widetilde W^{\pi^\ast}_t )\bigr)\, dt \\
			& \qquad \qquad =  \int_0^1 \int_{\R}  \bigl( e^{\bi z(x+ \sqrt{1/(4n)}\,W_t)} - e^{\bi z(x+ \sqrt{1/(4n)}\,\widetilde W^{\pi^\ast}_t)}\bigr)\, h_s(z)\, dz\, dt\\
			& \qquad \qquad = \int_{\R} h_s(z) e^{\bi z x} \kappa (\sqrt{1/(4n)}z)\, dz.
		\end{aligned}
	\end{equation}
	Combining~\eqref{rsv1}, \eqref{rsv4}, \eqref{rsv3} and \eqref{rsv6} we conclude that
	\begin{equation}\label{rsv7}
		\begin{aligned}
			&	 \EE\Bigl[\Bigl| \int_{t_{i-1}}^{t_i} \bigl (\mu_s (X_{t_{i-1} } +W_t - W_{t_{i-1} } ) -  \mu_s (X_{t_{i-1}} +\widetilde W^\pi_t - \widetilde W^\pi_{t_{i-1}})  \bigr)\, dt \Bigr|^2\Big] \\
			& \qquad\qquad \ge \frac{c_3}{16n^2}
			\EE\Bigl[ \int_{\R} e^{-c_4\,x^2} \Bigl| \int_\R h_s(z) \,e^{\bi z x} \kappa (\sqrt{1/(4n)}z)\, dz \Bigr|^2 \, dx\Bigr].
		\end{aligned}
	\end{equation}		
	
	Let $\omega\in\Omega$
	and define $f,g\colon \R\to \C$ by 
	\[
	f(x) =  e^{-c_4\,x^2/2},\quad g(x) = h_s(x)\,\kappa (\sqrt{1/(4n)} x,\omega)
	\]
	for  $x\in\R$. Clearly, $f\in \Lc^2(\R)$. Moreover, for Lebesgue almost all $x\in\R$,
	\[
	\F f(x) = \sqrt{\frac{2\pi }{c_4}} \,  e^{-x^2/(2c_4)},
	\] 
	which, in particular, yields $\F f\in \Lc^1(\R)\cap  \Lc^2(\R)$.
	Furthermore, by~\eqref{rsv5a} and $h_s\in \Lc^2(\R)$, we obtain $g\in   \Lc^2(\R)$ and therefore $\F g\in \Lc^2(\R)$. Hence, by Plancherel's theorem, the convolution theorem  and the Fourier inversion theorem, 
	\begin{equation}\label{rsv7a}
		\begin{aligned}		
			& \int_{\R} e^{-c_4\,x^2} \Bigl| \int_\R h_s(z) \,e^{\bi z x} \kappa (\sqrt{1/(4n)}z,\omega)\, dz \Bigr|^2 \, dx \\
			& \qquad\quad  = \int_\R |f(x)\,\F g(x)|^2\, dx\\
			& \qquad\quad  = 
			\frac{1}{2\pi}\,
			\int_\R |\F (f\cdot \F g )(x)|^2\, dx\\
			& \qquad\quad = \frac{1}{(2\pi)^3}\, \int_\R |\F f\ast \F\F g )(x)|^2\, dx\\
			& \qquad\quad = \frac{1}{(2\pi)^3}\,\int_\R  \Bigl| \int_\R \F f(u) \, \F\F g(x-u)\, du\Bigr|^2\, dx\\
			& \qquad\quad = \frac{1}{2\pi}\,\int_\R  \Bigl| \int_\R \F f(u) \, g(u-x)\, du\Bigr|^2\, dx\\
			& \qquad\quad = \frac{\sqrt{4n}}{2\pi}\, \int_\R  \Bigl| \int_\R \F f(u) \, g(u-\sqrt{4n}x)\, du\Bigr|^2\, dx\\
			& \qquad\quad  = \frac{1}{c_4(4n)^{s}}\,
			\int_\R  \Bigl| \int_\R   \frac{e^{-u^2/(2c_4)} \kappa(u/\sqrt{4n}-x,\omega)}{(e/\sqrt{4n} + |u/\sqrt{4n}-x|)^{1/2+s}\, \ln(e + |u-\sqrt{4n}x|)} 
			\, du\Bigr|^2\, dx.
		\end{aligned}
	\end{equation}
	
	Combining~\eqref{rsv7} with~\eqref{rsv7a} we conclude that
	\begin{equation}\label{rsv8a}
		\begin{aligned}		
			&\ln^2(n+1) \,  
			\EE\Bigl[\Bigl| \int_{t_{i-1}}^{t_i} \bigl (\mu_s (X_{t_{i-1} } +W_t - W_{t_{i-1} } ) -  \mu_s (X_{t_{i-1}} +\widetilde W^\pi_t - \widetilde W^\pi_{t_{i-1}})  \bigr)\, dt \Bigr|^2\Big]\\
			&\qquad\qquad \ge \frac{c_5}{n^{2+s}} \, \EE\Bigl[ \int_\R  \Bigl| \int_\R  R_n(u,x)\, du\Bigr|^2\, dx\Bigr],
		\end{aligned}
	\end{equation}
	where
	\begin{equation}\label{rsv8b}
		\begin{aligned}	
			R_n(u,x,\omega) & =  \frac{e^{-u^2/(2c_4)}\kappa(u/\sqrt{4n}-x,\omega)\ln(\sqrt{4n}) }{(e/\sqrt{4n} + |u/\sqrt{4n}-x|)^{1/2+s}\, \ln(e + |u-\sqrt{4n}x|)} 
		\end{aligned}
	\end{equation}
	for all $u,x\in\R$ and $\omega\in\Omega$.
	
	It remains to prove that
	\begin{equation}\label{rsv8}
		\begin{aligned}
			&	\liminf_{n\to\infty}  \EE\Bigl[ \int_\R  \Bigl| \int_\R  R_n(u,x)\, du\Bigr|^2\, dx\Bigr] >0.
		\end{aligned}	
	\end{equation}
	To this end we show below  that for all $x\in\R$, 
	$\PP$-almost surely,
	\begin{equation}\label{A}
		\lim_{n\to\infty} \int_{\R\setminus [-n^{1/4},n^{1/4}]}  R_n(u,x)\, du =0,
	\end{equation}
	and that
	\begin{equation}\label{B}
		\EE\Bigl[ \int_\R 	\liminf_{n\to\infty}\,  \Bigl| 	 \int_{  [-n^{1/4},n^{1/4}]}  R_n(u,x)\, du \Bigr|^2\, dx\Bigr]>0.
	\end{equation}
	Using Fatou's lemma, \eqref{A} and~\eqref{B} we then obtain~\eqref{rsv8} by  
	\begin{equation}\label{rsv9}
		\begin{aligned}
			&	\liminf_{n\to\infty}  \EE\Bigl[ \int_\R  \Bigl| \int_\R  R_n(u,x)\, du\Bigr|^2\, dx\Bigr]\\
			&\qquad\qquad \ge  \EE\Bigl[ \int_\R 	\liminf_{n\to\infty}\,  \Bigl| \int_\R  R_n(u,x)\, du\Bigr|^2\, dx\Bigr]\\
			& \qquad\qquad \ge  \EE\Bigl[ \int_\R 	\liminf_{n\to\infty}\,  \Bigl( \frac{1}{2}\Bigl| \int_{ [-n^{1/4},n^{1/4}]}  R_n(u,x)\, du\Bigr|^2\\
			& \qquad\qquad\qquad\qquad - \Bigl| \int_{\R\setminus [-n^{1/4},n^{1/4}]}  R_n(u,x)\, du\Bigr|^2\Bigr)\, dx\Bigr]\\
			& \qquad\qquad =\frac{1}{2}\, \EE\Bigl[ \int_\R 	\liminf_{n\to\infty}\, \Bigl| \int_{[-n^{1/4}, n^{1/4}]}  R_n(u,x)\, du\Bigr|^2 \, dx\Bigr] >0.
		\end{aligned}
	\end{equation}	
	
	We turn to the proof of~\eqref{A}. For all  $x,u\in\R$, $\omega\in\Omega$ and  $n\in\N$ we obtain by~\eqref{rsv5a} that
	\[
	|R_n(u,x,\omega)| \le c_6 \ln(\sqrt{4n}) \, n^{1/4+s/2} \, e^{-u^2/(2c_4)}.
	\]
	Using a standard estimate for the tail of the standard normal distribution we conclude that for all $x\in\R$ and all $n\in\N$,
	\begin{equation}\label{rsv10}
		\begin{aligned}
			\Bigl|	\int_{\R\setminus  [-n^{1/4},n^{1/4}]}  R_n(u,x,\omega)\, du\Bigr|	& \le c_7 \ln(\sqrt{4n}) \, 
			n^{1/4+s/2} \, \int_{n^{1/4}/\sqrt{c_4}}^\infty e^{-u^2/2}\, du\\
			& \le c_8  \ln(\sqrt{4n})
			\, n^{s/2}\, e^{-\sqrt{n}/(2c_4)},
		\end{aligned}
	\end{equation}			
	which implies~\eqref{A}.
	
	For the proof of~\eqref{B} we first show that 
	\begin{equation}\label{rtl1}
		\EE[|\kappa(1)|^2] > 0.
	\end{equation}
	We have
	\begin{equation}\label{rsv14}
		\begin{aligned}
			\EE\bigl[|\kappa(1)|^2\bigr] & = \EE[\kappa(1)\, \kappa(-1)]\\
			& =  \EE\Bigl[ \int_0^1 \bigl(e^{\bi \, W_t}	- e^{\bi\, \tW_t^{\pi^*} }\bigr)\, dt \, \int_0^1 \bigl(e^{-\bi \, W_t}	-e^{-\bi\, \tW_t^{\pi^*}}\bigr)\, dt \Biggr]\\
			& = 4\int_0^1\int_s^1 \Bigl( \EE\bigl[e^{\bi\, (W_t-W_s)}\bigl] -\EE\bigl[ e^{\bi\, (W_t-\tW^{\pi^*}_s)}\bigr]\Bigr) \, dt \, ds.
		\end{aligned}
	\end{equation}
	Note that for all $0\le s\le t \le 1$ we have $W_t-\tW^{\pi^*}_s = (t-s)W_1 + B^{\pi^*}_t - \tB^{\pi^*}_s$ and $W_1$, $B^{\pi^*}_t$, $ \tB^{\pi^*}_s$ are independent, and therefore,
	\begin{equation}\label{rsv15}
		\EE\bigl[ e^{\bi\, (W_t-\tW^{\pi^*}_s)}\bigr] = \EE\bigl[ e^{\bi\, (t-s)W_1}\bigr]\, \EE\bigl[ e^{\bi\, B^{\pi^*}_t}\bigr]\, \EE\bigl[ e^{\bi\, \tB^{\pi^*}_s}\bigr].
	\end{equation}
	Combining~\eqref{rsv14} with~\eqref{rsv15} we conclude that
	\begin{equation*}\label{rsv16}
		\begin{aligned}
			\EE\bigl[|\kappa(1)|^2\bigr] &  = 4\int_0^1\int_s^1 \Bigl( e^{-(t-s)/2} - e^{-(t-s)^2/2} \, e^{-t(1-t)/2}\, e^{-s(1-s)/2}\Bigr) \, dt \, ds\\
			& = 4\int_0^1\int_s^1 e^{-(t-s)/2}\bigl( 1- e^{-s(1-t)}\bigr)\, dt \, ds,
		\end{aligned}
	\end{equation*}
	which implies~\eqref{rtl1}.

	Clearly, ~\eqref{rtl1} implies $\EE[|\alpha(\kappa(1))|] >0$
	for some $\alpha\in\{\text{Re},\text{Im}\}$ and it follows that there exists $\delta\in (0,\infty)$ such that 
	\begin{equation}\label{rtl2}
		\PP(|\alpha(\kappa(1))| \ge \delta) >0.
	\end{equation}
	Since $\PP(\|W\|_\infty + \|\widetilde W^{\pi^*}\|_\infty<\infty) = 1$ we conclude that there exists $M\in\N$ such that 
	\begin{equation}\label{rtl3}
		\PP\bigl(\{|\alpha( \kappa(1))| \ge \delta\}\cap\{\|W\|_\infty + \|\widetilde W^{\pi^*}\|_\infty\le M\}) >0.
	\end{equation}
	Put
	\[
	A = \{|\alpha( \kappa(1))| \ge \delta\}\cap\{\|W\|_\infty + \|\widetilde W^{\pi^*}\|_\infty\le M\}.
	\]
	By~\eqref{rsv5}
	we derive that for all $\omega\in A$ and all $z,y\in\R$,
	\begin{equation}\label{rtl4}
		|\alpha( \kappa(z,\omega)) - \alpha( \kappa(y,\omega))| = |\alpha(  \kappa(z,\omega) -  \kappa(y,\omega))| \le | \kappa(z,\omega) - \kappa(y,\omega)|\le |z-y|\,M.
	\end{equation}
	Put 
	\[
	\tilde\delta = \min\bigl(\delta/(2M),1/2).
	\]
	Using~\eqref{rtl3} and~\eqref{rtl4} we get $\PP(A)>0$ and for all $\omega\in A$ and all $z\in [1-\tilde\delta,1+\tilde \delta]$,
	\begin{equation}\label{rtl5}
		|\alpha( \kappa(z,\omega))| \ge |\alpha( \kappa(1,\omega))|  - |\alpha(\kappa(z,\omega))-\alpha(\kappa(1,\omega))| \ge \delta- \delta/2 = \delta/2.
	\end{equation}
	Choose $n_0\in\N$ such that $1/n^{1/4} < \tilde \delta $ for all $n\ge n_0$. Then for all $n\ge n_0$,  all 
	$x\in [-1-\tilde\delta/2, -1+\tilde\delta/2]$ 
	and  all $u\in [-n^{1/4},n^{1/4}]$ we have
	\[
	|1-(u/\sqrt{4n} - x)| \le |u|/\sqrt{4n} + |1+x| \le 1/(2n^{1/4}) + \tilde \delta /2 < \tilde \delta.
	\] 
	Employing~\eqref{rtl5} we thus conclude that for all $\omega \in A$, all $n\ge n_0$,  all $x\in [-1-\tilde\delta/2, -1+\tilde\delta/2]$ and all $u\in [-n^{1/4},n^{1/4}]$,
	\begin{equation}\label{rtl6}
		|\alpha( \kappa(u/\sqrt{4n} -x,\omega))| \ge \delta/2.	
	\end{equation}
	Hence, for all $\omega \in A$, all $n\ge n_0$,  all $x\in [-1-\tilde\delta/2, -1+\tilde\delta/2]$ and all $u\in [-n^{1/4},n^{1/4}]$,
	\begin{equation}\label{rtl7}
		\begin{aligned}
			&	|\alpha(R_n(u,x,\omega))| \\
			&\qquad = \frac{ e^{-u^2/(2c_4)}\ln(\sqrt{4n}) }{(e/\sqrt{4n} + |u/\sqrt{4n}-x|)^{1/2+s}\, \ln(e + |u-\sqrt{4n}x|)}|\alpha(\kappa(u/\sqrt{4n}-x,\omega))|\\
			&\qquad \ge \frac{ e^{-u^2/(2c_4)}\ln(\sqrt{4n}) }{(e/\sqrt{4n} + |u/\sqrt{4n}-x|)^{1/2+s}\, \ln(e + |u-\sqrt{4n}x|)}\,\delta/2.
		\end{aligned}
	\end{equation}
	Moreover,
	for all $\omega \in A$,   
	all $x\in [-1-\tilde\delta/2, -1+\tilde\delta/2]$ 
	and all $u\in \R$,
	\begin{equation}\label{rtl7a}
		\liminf_{n\to\infty} \frac{ e^{-u^2/(2c_4)}\ln(\sqrt{4n}) }{(e/\sqrt{4n} + |u/\sqrt{4n}-x|)^{1/2+s}\, \ln(e + |u-\sqrt{4n}x|)} = \frac{ e^{-u^2/(2c_4)} }{ |x|^{1/2+s}} \ge  \frac{e^{-u^2/(2c_4)} }{(3/2)^{1/2+s}}.
	\end{equation}
	Using~\eqref{rtl7} and \eqref{rtl7a} and Fatou's lemma we conclude that
	\begin{equation*}\label{rtl8}
		\begin{aligned}
			& \EE\Bigl[ \int_\R 	\liminf_{n\to\infty}\, \Bigl| \int_{-n^{1/4}}^{ n^{1/4}}  R_n(u,x)\, du\Bigr|^2 \, dx\Bigr]	\\
			& \quad \ge \int_A\int_{-1-\tilde\delta/2}^{-1+\tilde\delta/2}	\liminf_{n\to\infty} \, \Bigl| \int_{-n^{1/4}}^{n^{1/4}}   R_n(u,x, \omega)\, du\Bigr|^2 \, dx\, \PP(d\omega)	\\
			& \quad\ge \int_A \int_{-1-\tilde\delta/2}^{-1+\tilde\delta/2}	\liminf_{n\to\infty} \, \Bigl| \int_{-n^{1/4}}^{n^{1/4}}  \alpha(R_n(u,x, \omega))\, du\Bigr|^2 \, dx\, \PP(d\omega)	\\
			& \quad\ge 
			\int_A \int_{-1-\tilde\delta/2}^{-1+\tilde\delta/2}	\liminf_{n\to\infty} \, \Bigl| \int_{-n^{1/4}}^{n^{1/4}}  \frac{ e^{-u^2/(2c_4)}\ln(\sqrt{4n}) \,\delta/2}{(e/\sqrt{4n} + |u/\sqrt{4n}-x|)^{1/2+s}\, \ln(e + |u-\sqrt{4n}x|)} \, du\Bigr|^2 \, dx\, \PP(d\omega)\\
			& \quad\ge \int_A \int_{-1-\tilde\delta/2}^{-1+\tilde\delta/2} \, \Bigl| \int_\R 	\liminf_{n\to\infty} \frac{ 1_{[-n^{1/4},n^{1/4}]}(u) \,e^{-u^2/(2c_4)}\ln(\sqrt{4n}) \,\delta/2}{(e/\sqrt{4n} + |u/\sqrt{4n}-x|)^{1/2+s}\, \ln(e + |u-\sqrt{4n}x|)} \, du\Bigr|^2 \, dx\, \PP(d\omega)\\
			& \quad\ge c_9\,\int_A \int_{-1-\tilde\delta/2}^{-1+\tilde\delta/2} \, \Bigl| \int_\R  e^{-u^2/(2c_4)}\, du\Bigr|^2\, dx\, \PP(d\omega) \\
			& \quad= c_9\, \Bigl| \int_\R  e^{-u^2/(2c_4)}\, du\Bigr|^2\, \tilde \delta \, \PP(A) > 0.
		\end{aligned}
	\end{equation*}	
	This completes the proof of the lemma.	
\end{proof}

We are ready to prove Theorem~\ref{thm2}. Let $s\in (1/2,1)$ and let $X$ denote the strong solution of the SDE~\eqref{sde0} with $\mu = \mu_s$. By Lemma~\ref{lem3}, the function $\mu_s$ is bounded and satisfies $\mu_s\in \Lc^1(\R)$ as well as $\mu_s\in \Wc^{s,2}\cap \Wc^{(s-1/4)-,4}$. In particular, $\mu_s$ satisfies all assumptions in Theorem~\ref{thm2} and Lemmas~\ref{lemf1} to~\ref{lemf3} are applicable with $\mu = \mu_s$.

Recall the definition of $\tX^\pi$ for $\pi\in\Pi$, see~\eqref{extra1}, and put $Y=G_{\mu_s}\circ X$ and $\tY^\pi=G_{\mu_s}\circ \tX^\pi$, see~\eqref{extra2} and ~\eqref{extra3}. 
Using~\eqref{ndisc2} as well as Lemma~\ref{lemf1} and Lemma~\ref{lemf2} we obtain that there exists $c\in (0,\infty)$ such that for all $n\in\N$,
\begin{equation}\label{end1}
	\begin{aligned}
		\inf_{\pi\in\Pi^n} e_2(\pi) & \ge \inf_{\pi\in\widetilde \Pi^n} e_2(\pi) \ge \frac{1}{2} \inf_{\pi\in\widetilde \Pi^n} \EE\bigl[|X_1-\tX^{\pi}_1|^2\bigr]^{1/2} \ge c \inf_{\pi\in\widetilde \Pi^n} \EE\bigl[|Y_1-\tY^{\pi}_1|^2\bigr]^{1/2}.
	\end{aligned}	
\end{equation} 
Combining Lemma~\ref{lemf3b} with Lemma~\ref{lemf5} and observing~\eqref{ndisc3} we get the existence of $n^*\in \N$ and $c_0,c_1\in (0,\infty)$ such that for all $n\ge n^*$ and all $\pi =\{t_1,\dots,t_{5n}\}\in \widetilde \Pi^n$ with $0 < t_1 <\dots t_{5n}=1$,
\begin{equation}\label{end2}
	\begin{aligned}
		\EE\bigl[|Y_1-\tY^{\pi}_1|^2\bigr] & \ge c_1 \frac{\#\bigl\{ i\in \{2,\dots, 5n\}\colon t_{i-1}\ge 1/2\text{ and } t_i-t_{i-1} = 1/(4n)\} }{\ln^2(n+1)n^{2+s}}  - \frac{1}{n^{1+s+c_0}}\\
		& \ge \frac{c_1}{\ln^2(n+1)n^{1+s}}  - \frac{1}{n^{1+s+c_0}}.
	\end{aligned}	
\end{equation} 
Combining~\eqref{end1} with \eqref{end2} we obtain that there exists $n_0 \in \N$ such that for all $n\geq n_0$, 
\[
\inf_{\pi\in\Pi^n} e_2(\pi)  \ge  \frac{c \sqrt{c_1}}{2\ln(n+1)n^{(1+s)/2}}. 
\]
Using the fact that the sequence $(\inf_{\pi\in\Pi^n} e_2(\pi))_{n\in\N}$ is monotonically decreasing we conclude that there exists $c_2\in (0,\infty)$ such that for all $n\in\N$,
\begin{align*}
	\inf_{\substack{
			t_1,\dots ,t_n \in [0,1]\\
			g \colon \R^n \to \R \text{ measurable} \\
	}}	 \EE\bigl[|X_1-g(W_{t_1}, \ldots, W_{t_n})|^2\bigr]^{1/2}\geq  \inf_{\pi\in\Pi^{n+1}} e_2(\pi)
	\geq \frac{c_2 }{\ln(n+1)n^{(1+s)/2}},
\end{align*} 			
which completes the proof of Theorem~\ref{thm2}.

\bibliographystyle{acm}
\bibliography{bibfile}

\end{document}